\newtheorem{theorem}{Theorem}
\theoremstyle{plain}
\newtheorem{corollary}{Corollary}
\newtheorem{lemma}{Lemma}
\newtheorem{proposition}{Proposition}
\newtheorem{remark}{Remark}
\numberwithin{equation}{section}
\begin{document}
\title[Minimal surfaces in spheres and a Ricci-like condition]{Minimal surfaces in spheres
and a Ricci-like condition}
\author{Amalia-Sofia Tsouri and Theodoros Vlachos}
\address{Department of Mathematics, University of Ioannina, 45110 Ioannina,
Greece}
\email{tsourisofia93@gmail.com}
\email{tvlachos@uoi.gr}
\subjclass[2010]{Primary 53A10; Secondary 53C42}
\keywords{Minimal surfaces, nearly K{\"a}hler sphere
$\mathbb{S}^6,$ pseudoholomorphic curves, exceptional surfaces, Ricci-like condition}
\thanks{The first named author would like to acknowledge financial support by the General Secretariat for
Research and Technology (GSRT) and the Hellenic Foundation for Research and
Innovation (HFRI) Grant No: 133.}

\begin{abstract}
We deal with minimal surfaces in spheres that are locally isometric to a
pseudoholomorphic curve in a totally geodesic $\mathbb{S}^{5}$ in the nearly K{\"a}hler sphere
$\mathbb{S}^6$. Being locally isometric to a pseudoholomorphic curve in $\mathbb{S}^5$ turns out to be equivalent to the Ricci-like condition $\Delta\log(1-K)=6K,$ where $K$ is the Gaussian curvature of the induced metric.
Besides flat minimal surfaces in spheres, direct sums of surfaces
in the associated family of pseudoholomorphic curves in $\mathbb{S}^5$ do satisfy this Ricci-like condition. Surfaces in both classes are  exceptional surfaces. These are minimal surfaces whose all Hopf differentials are holomorphic, or equivalently the curvature ellipses have constant eccentricity up to the last but one.
Under appropriate global assumptions, we prove that minimal surfaces in spheres that satisfy this Ricci-like condition are indeed exceptional. Thus, the classification of these surfaces is reduced to the classification of
exceptional surfaces that are locally isometric to a pseudoholomorphic curve in
$\mathbb{S}^5.$ In fact, we prove, among other results,  that such exceptional surfaces in odd dimensional spheres are flat or direct sums of surfaces in the associated family of a pseudoholomorphic curve in $\mathbb{S}^5$.
\end{abstract}

\maketitle

\section{Introduction}

A fundamental problem in the study of minimal surfaces is to classify those surfaces
isometric to a given one. More precisely, the following question has been addressed by Lawson in \cite{Law}:

\begin{quotation}
\textit{Given a minimal surface $f\colon M\to\mathbb{Q}_c^n$ in a $n$-dimensional space form
of curvature $c$, what is the moduli space of all noncongruent minimal surfaces $\tilde{f}
\colon M\to\mathbb{Q}_c^{n+m},$ any $m$, which are isometric to $f.$}
\end{quotation}

Partial answers to this problem were provided by several authors. For instance, see \cite{Cal2, J, L, Law, M1, N, S, S1, V9,V08}. 

A classical result due to Ricci-Curbastro \cite{Ric} asserts that the Gaussian curvature
$K\le0$ of any minimal surface in $\mathbb{R}^3$ satisfies the so-called Ricci condition
\[
\Delta\log(-K)=4K,
\]
away from totally geodesic points, where $\Delta$ is the Laplacian operator of the surface 
with respect to the induced metric $ds^2.$ This condition is equivalent to the flatness of
the metric $d\hat{s}^2=(-K)^{1/2}ds^2.$ 
Conversely (see \cite{L0}), a metric on a simply connected $2$-dimensional Riemannian manifold with negative Gaussian curvature is realized on a minimal surface in $\mathbb{R}^3,$ if the Ricci
condition is satisfied. Hence, the Ricci condition is a necessary and sufficient condition for
a $2$-dimensional Riemannian manifold to be locally isometric to a minimal surface in
$\mathbb{R}^3.$

Lawson \cite{L} studied the above problem for minimal surfaces in a Euclidean
space that are isometric to minimal surfaces in $\mathbb{R}^3.$ Using the Ricci condition
and the holomorphicity of the Gauss map, he classified all minimal surfaces in
$\mathbb{R}^n$ that are isometric to a minimal surface in $\mathbb{R}^3.$ Calabi
\cite{Cal2} obtained a complete description of the moduli space of all noncongruent
minimal surfaces in $\mathbb{R}^n$ which are isometric to a given holomorphic curve in
$\mathbb{C}^n.$

The aforementioned problem turns out to be more difficult for minimal surfaces in spheres. The difficulty arises from the fact that their Gauss map is merely harmonic, while for
minimal surfaces in the Euclidean space it is holomorphic. The classification problem of
minimal surfaces in spheres that are isometric to minimal surfaces in the sphere
$\mathbb{S}^3$ was raised by Lawson in \cite{L}, where he stated a conjecture that is still
open. This conjecture has been only confirmed for certain classes of minimal surfaces in
spheres (see \cite{N, S, S1, V9,V08}). It is worth noticing that a surface is locally isometric to a minimal surface in 
$\mathbb{S}^3$ if its Gaussian curvature 
satisfies the spherical  Ricci condition
$$
\Delta\log(1-K)=4K.
$$

A distinguished class of minimal surfaces in spheres is the one of \textit{pseudoholomorphic
curves} in the nearly K{\"a}hler sphere $\mathbb{S}^6,$ that was introduced by Bryant
\cite{Br} and has been widely studied (cf. \cite{BVW, H, EschVl}). The pseudoholomorphic curves in
$\mathbb{S}^6$ are nonconstant smooth maps from a Riemann surface into the
nearly K{\"a}hler sphere $\mathbb{S}^6,$ whose differential is complex linear with respect to the almost complex structure of 
$\mathbb{S}^6$ that is induced from the multiplication of the Cayley numbers. 

In analogy with Calabi's work \cite{Cal2}, we consider the problem of classifying minimal
surfaces in spheres that are isometric to pseudoholomorphic curves in the nearly K{\"a}hler sphere $\mathbb{S}^6.$
More precisely, in the present paper we focus on the following problem:
\textit{Classify minimal surfaces in spheres that are locally isometric to pseudoholomorphic curves
in a totally geodesic  $\mathbb{S}^5\subset\mathbb{S}^6.$} The case of  pseudoholomorphic curves
that are substantial in $\mathbb{S}^6$ requires a different treatment and will be the subject of a forthcoming paper.

A characterization of Riemannian metrics that
arise as induced metrics on pseudoholomorphic curves in $\mathbb{S}^5$ was given in
\cite{H, EschVl}. In fact, the Gaussian curvature $K\leq1$ of 
a pseudoholomorphic curve in $\mathbb{S}^5$ satisfies the condition
\[
\Delta\log(1-K)=6K, \tag{$\ast$}
\]
away from totally geodesic points, where $\Delta$ is the Laplacian operator of the induced
metric $ds^2.$ This condition is equivalent to the flatness of the metric $d\hat
{s}^2=(1-K)^{1/3}ds^2.$ Conversely, any two-dimensional Riemannian manifold $(M,ds^2),$ with Gaussian
curvature $K<1$, that satisfies the Ricci-like condition ($\ast$) can be locally isometrically
immersed as a pseudoholomorphic curve in $\mathbb{S}^5.$ 
Thus the classification of minimal surfaces in spheres that are locally isometric to a
pseudoholomorphic curve in $\mathbb{S}^5\subset\mathbb{S}^6$ is equivalent to the
classification of those surfaces whose induced metrics satisfy the condition $(\ast).$

Obviously flat minimal surfaces in spheres satisfy the condition $(\ast).$
These surfaces were classified in \cite{K2} and lie in odd dimensional spheres. Another class of minimal surfaces in odd dimensional spheres that satisfy the 
condition $(\ast)$ is constructed in the following way. Let $g_{\theta}, 0\leq \theta<\pi$, be the associated family of a simply connected pseudoholomorphic curve $g\colon M\to
\mathbb{S}^{5}.$ We consider the surface $\hat g\colon M\to\mathbb{S}^{6m-1}$ defined by
\begin{equation}
\hat g=a_{1}g_{\theta _{1}}\oplus \cdots\oplus a_{m}g_{\theta _{m}},  \tag{$\ast \ast$}
\end{equation}
where $a_{1},\dots\,,a_{m}$ are any real numbers with $\sum_{j=1}^{m}a_{j}^{2}=1,$
$0\leq \theta _{1}<\cdots<\theta_{m}<\pi,$ and $\oplus $ denotes the
orthogonal sum with respect to an orthogonal decomposition of the Euclidean space
$\mathbb{R}^{6m}.$  It is easy to see that $\hat g$ is minimal and isometric to $g.$ 

It would be interesting to know whether there exist other minimal surfaces in spheres whose induced metrics satisfy
the condition $(\ast),$ besides the flat ones and surfaces of the type $(\ast \ast).$ 

We prove that minimal surfaces given by $(\ast\ast)$ belong to the class of exceptional
surfaces that were studied in \cite{V08, V16}. These are minimal surfaces
whose all Hopf differentials are holomorphic, or equivalently all curvature ellipses of any order have constant eccentricity up to the last but one. This class of surfaces  contains the superconformal ones.  

It is then natural to investigate whether any minimal surface that
satisfies the Ricci-like condition $(\ast)$ is an exceptional surface. 
We are able to prove that minimal surfaces in spheres that satisfy the condition $(\ast)$ are exceptional
under appropriate global assumptions. In view of this result, the study of minimal surfaces
in spheres that satisfy the condition $(\ast)$ is reduced to the class of exceptional
surfaces.

In fact, we prove that besides flat minimal surfaces in odd dimensional spheres, the only
simply connected exceptional surfaces that satisfy the condition $(\ast)$ are of the type $(\ast\ast).$ Furthermore, we show that compact minimal surfaces in 
$\mathbb{S}^n, 4\le n\le7,$ that are not homeomorphic to the torus, cannot be locally isometric to
a pseudoholomorphic curve in $\mathbb{S}^5,$ unless $n=5.$ 
Moreover, we prove that, under certain assumptions, there are no
minimal surfaces in even dimensional spheres that satisfy the condition 
$(\ast).$

It is worth noticing that a necessary and sufficient condition for a $2$-dimensional
Riemannian manifold to be locally isometric to a minimal Lagrangian surface in 
$\mathbb{C}P^2$ (see \cite[Theorem 3.8]{EGT}) is that its induced metric satisfies the condition $(\ast).$ Thus our results apply to minimal surfaces in spheres that are locally
isometric to minimal Lagrangian surfaces in $\mathbb{C}P^2.$

The paper is organized as follows: In Section 2, we fix the notation and give some
preliminaries. In Section 3, we recall the notion of Hopf differentials and some results
about exceptional surfaces. In Section 4, we give some basic facts about absolute value type
functions, a notion that was introduced in \cite{EGT,ET}. In section 5, we discuss
pseudoholomorphic curves in $\mathbb{S}^{5}$ and give some useful properties for minimal
surfaces that satisfy the Ricci-like condition ($\ast$). In Section 6, we study the class of
surfaces of type ($\ast\ast$), we compute their Hopf differentials and show that they are
indeed exceptional. Section 7 is devoted to minimal surfaces that are exceptional and satisfy
the condition $(\ast).$ There we prove that, besides flat minimal surfaces in odd dimensional spheres, the only
simply connected exceptional surfaces that satisfy the condition $(\ast)$ are given
by $(\ast\ast).$ In the last section, we prove our global results.

\section{Preliminaries}

In this section, we collect several facts and definitions about minimal surfaces in spheres.
For more details on these facts we refer to \cite{DF} and \cite{DV2k15}. 
\vspace{1,5ex}

Let $f\colon M\to\mathbb{S}^n$ denote an isometric immersion of a two-dimensional
Riemannian manifold. The $k^{th}$\emph{-normal space} of $f$ at $x\in M$ for
$k\geq 1$ is defined as
$$
N^f_k(x)={\rm span}\left\{\alpha^f_{k+1}(X_1,\ldots,X_{k+1}):X_1,\ldots,X_{k+1}\in
T_xM\right\},
$$
where 
$$
\alpha^f_s\colon TM\times\cdots\times TM\to N_fM,\;\; s\geq 3, 
$$
denotes the symmetric tensor called the $s^{th}$\emph{-fundamental form} given
inductively by
$$
\alpha^f_s(X_1,\ldots,X_s)=\left(\nabla^\perp_{X_s}\cdots\nabla^\perp_{X_3}
\alpha^f(X_2,X_1)\right)^\perp
$$
and $\alpha^f\colon TM\times TM\to N_fM$ stands for the standard second fundamental 
form of $f$ with values in the normal bundle. Here, $\nabla^{\perp}$ denotes the induced
connection in the normal bundle $N_fM$ of $f$ and $(\,\cdot\,)^\perp$ means taking the
projection onto the orthogonal complement of $N^f_1\oplus\cdots\oplus N^f_{s-2}$ in
$N_fM.$ If $f$ is minimal, then ${\rm dim}N^f_k(x)\le2$ for all $k\ge1$ and any $x\in M$ (cf. \cite{DF}).

A surface $f\colon M\to\mathbb{S}^n$ is called \emph{regular} if for each $k$ the
subspaces $N^f_k$ have constant dimension and thus form normal subbundles. Notice
that regularity is always verified  along connected components of an open dense subset of
$M.$ 

Assume that an immersion $f\colon M\to\mathbb{S}^n$ is minimal and substantial.
By the latter, we mean that $f(M)$ is not contained in any totally geodesic submanifold of $\mathbb{S}^n.$ In this case, the normal bundle
of $f$ splits along an open dense subset of $M$ as
$$
N_fM=N_1^f\oplus N_2^f\oplus\dots\oplus N_m^f,\;\;\; m=[(n-1)/2],
$$
since all higher normal bundles  have rank two except possible the last one that has rank
one if $n$ is odd; see \cite{Ch} or \cite{DF}. Moreover, if $M$ is oriented, then an
orientation is induced on each plane subbundle $N_s^f$  given by the ordered base
$$
\alpha^f_{s+1}(X,\ldots,X),\;\;\;\alpha^f_{s+1}(JX,\ldots,X),
$$
where $0\neq X\in TM$. 

If $f\colon M\to\mathbb{S}^n$ is a minimal surface, then at $x\in M$ and for each
$N_r^f$, $1\leq r\leq m$, the \emph{$r^{th}$-order curvature ellipse}
$\mathcal{E}^f_r(x)\subset N^f_r(x)$ is defined  by
$$
\mathcal{E}^f_r(x) = \left\{\alpha^f_{r+1}(Z^{\varphi},\ldots,Z^{\varphi})\colon\,
Z^{\varphi}=\cos\varphi Z+\sin\varphi JZ\;\mbox{and}\;\varphi\in[0,2\pi)\right\},
$$
where $Z\in T_xM$ is any vector of unit length.

A substantial regular surface $f\colon M\to\mathbb{S}^{n}$ is called
\emph{$s$-isotropic} if it is  minimal and  at any $x\in M$ the  curvature ellipses
$\mathcal{E}^f_r(x)$ contained in all two-dimensional  $N^f_r$$\,{}^{\prime}$s 
are circles for any $1\le r\le s.$

The $r$-th \textit{normal curvature} $K_{r}^{\perp}$ of $f$ is defined by
\begin{equation*}
K_{r}^{\perp}={\frac{2}{\pi}}{\hbox {Area}}(\mathcal{E}^f\sb r).
\end{equation*}
If $\kappa _{r}\geq \mu_{r}\geq 0$ denote the length of the semi-axes of the  curvature ellipse
$\mathcal{E}^f\sb r,$ then
\begin{equation}\label{elipsi}
K_{r}^{\perp}=2\kappa _{r}\mu _{r}.
\end{equation}

The \textit{eccentricity} $\varepsilon\sb r$ of the  curvature ellipse $\mathcal{E}^f\sb r$ is given by
\begin{equation*}
\varepsilon\sb r=\frac{\left(\kappa^{2}_{r}-\mu^{2}_{r}\right)^{1/2}}{\kappa_{r}},
\end{equation*}
where $\left(\kappa^{2}_{r}-\mu^{2}_{r}\right)^{1/2}$ is the distance from the center to a focus,
and can be thought of as a measure of how far $\mathcal{E}^f\sb r$ deviates from being a
circle.

The $a$-\textit{invariants} (see \cite{V16}) are the functions
\begin{equation*}
a^{\pm}_{r}= \kappa _{r}{\pm}  \mu _{r}= \left(2^{-r} \Vert \alpha^f_{r+1}
\Vert^{2} \pm K_{r}^{\perp}\right)^{1/2}.
\end{equation*}
These functions determine the geometry of the $r$-th curvature ellipse.

Denote by $\tau^o_f$ the index of the last plane bundle, in the orthogonal decomposition
of the normal bundle. Let $\{e_1,e_2\}$ be a local tangent orthonormal frame and
$\{e_\alpha\}$ be a local orthonormal frame of the normal bundle such that
$\{e_{2r+1},e_{2r+2}\}$ span $N_r^f$ for any $1\le r\le\tau^o_f$ and $e_{2m+1}$
spans the line bundle $N^f_{m+1}$ if $n=2m+1.$ For any $\alpha=2r+1$ or $\alpha=2r+2,$ we set 
$$
h_1^{\alpha}=\langle
\alpha^f_{r+1}(e_1,\dots,e_1),e_{\alpha}\rangle,{\ }h_2^{\alpha}=\langle
\alpha^f_{r+1}(e_1,\dots,e_1,e_2),e_{\alpha}\rangle.
$$
Introducing the complex valued functions
$$
H_{\alpha}=h_1^{\alpha}+ih_2^{\alpha}\;\;\text{for any}\;\;\alpha=2r+1\;\;\text{or}\;\;\alpha=2r+2,
$$
it is not hard to verify that the $r$-th normal
curvature is given by
\begin{equation}\label{prwthsxeshden}
K_r^{\perp}=i\left(H_{2r+1}{\overline{H}_{2r+2}}-{\overline{H}_{2r+1}}
H_{2r+2}\right).
\end{equation}

The length of the $(r+1)$-th fundamental form $\alpha^f_{r+1}$ is given by
\begin{equation}\label{deuterhsxeshden}
\Vert \alpha^f_{r+1}\Vert ^2=2^r\big(|{H_{2r+1}}|^2+|{H_{2r+2}}|
^2\big),
\end{equation}
or equivalently (cf. \cite{A})
\begin{equation}\label{si}
\Vert \alpha^f_{r+1}\Vert ^2=2^r(\kappa_r^2+\mu_r^2).
\end{equation}

\smallskip
Each plane
subbundle $N_r^f$ inherits a Riemannian connection from that of the normal bundle. Its
\textit{intrinsic curvature} $K^*_r$ is given by the following proposition (cf. \cite{A}).

\begin{proposition}\label{5}
\textit{The intrinsic curvature $K_r^{\ast}$ of each plane subbundle $N_{r}^f$
of a minimal surface} $f\colon M\to \mathbb{S}^{n}$ \textit{is given by} 
\begin{equation*}
K_{1}^{\ast}=K_1^{\perp}-{\frac{\Vert \alpha^f_3\Vert ^2}{2K_1^{\perp}}} 
\;\; \text{and}\;\; K_r^{\ast}={\frac{K_r^{\perp}}{(K_{r-1}^{\perp})^2}}{\frac{
\Vert \alpha^f_{r}\Vert ^{2}}{2^{r-2}}}-{\frac{\Vert \alpha^f_{r+2}\Vert ^2}{
2^rK_r^{\perp}}}\;\;\text{for}\;\;2\leq r\leq \tau_f^o.
\end{equation*}
\end{proposition}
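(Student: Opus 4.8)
The plan is to realize $K_r^\ast$ as the curvature of the metric connection that $\nabla^\perp$ induces on the oriented rank-two bundle $N_r^f$, and to compute it with moving frames. Since $N_r^f$ is oriented of rank two, its induced connection is encoded by the single form $\psi_r=\langle\nabla^\perp e_{2r+1},e_{2r+2}\rangle$, and its intrinsic curvature is read off from $K_r^\ast\,\omega^1\wedge\omega^2=-d\psi_r$, where $\{\omega^1,\omega^2\}$ is dual to $\{e_1,e_2\}$. The first step is to set up the Frenet-type equations for the adapted frame $\{e_\alpha\}$ chosen before the statement. Because $f$ is minimal and the normal bundle decomposes orthogonally as $N_1^f\oplus\cdots\oplus N_m^f$ with the higher fundamental forms realizing the associated flag, $\nabla^\perp$ sends $N_r^f$ into $N_{r-1}^f\oplus N_r^f\oplus N_{r+1}^f$ only. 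I would record the three blocks: the diagonal one is $\psi_r$, the coupling of $N_r^f$ up to $N_{r+1}^f$ is carried by the fundamental form $\alpha^f_{r+2}$ (which spans $N_{r+1}^f$), and, by skew-symmetry of the connection forms together with the inductive definition of the $\alpha^f_s$, the coupling of $N_r^f$ down to $N_{r-1}^f$ is carried by $\alpha^f_{r+1}$. All these off-diagonal forms I would write explicitly through the functions $h_1^\alpha,h_2^\alpha$, equivalently the complex $H_\alpha$.

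The second step applies Cartan's structure equation to $\psi_r$, giving $d\psi_r=\Omega_{2r+1}^{2r+2}-\sum_\gamma\omega_{2r+1}^\gamma\wedge\omega_\gamma^{2r+2}$, where $\Omega_{2r+1}^{2r+2}(e_1,e_2)=\langle R^\perp(e_1,e_2)e_{2r+1},e_{2r+2}\rangle$ is the normal holonomy term and the sum reduces, by the Frenet structure, to the couplings with $N_{r-1}^f$ and $N_{r+1}^f$. The decisive observation is that the holonomy term behaves differently according to the order: in the ambient space form the Ricci equation reads $\langle R^\perp(e_1,e_2)\xi,\eta\rangle=\langle[A_\eta,A_\xi]e_1,e_2\rangle$, and since the shape operators $A_\xi$ vanish for every $\xi$ lying in a higher normal space $N_r^f$ with $r\ge2$, this term vanishes for $r\ge2$ and equals $K_1^\perp$, computed in \eqref{prwthsxeshden}, when $r=1$. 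Consequently, for $r\ge2$ the whole of $K_r^\ast$ comes from the two coupling wedge products: the coupling with $N_{r+1}^f$ produces the second term $-\|\alpha^f_{r+2}\|^2/(2^rK_r^\perp)$ and the coupling with $N_{r-1}^f$ the first term $K_r^\perp\|\alpha^f_r\|^2/\big((K_{r-1}^\perp)^2\,2^{r-2}\big)$, once the wedges are rewritten in terms of the $|H_\alpha|^2$ via \eqref{prwthsxeshden} and \eqref{deuterhsxeshden} and simplified with \eqref{elipsi} and \eqref{si}. When $r=1$ there is no lower normal space, so the down-coupling is absent and the holonomy term $K_1^\perp$ survives instead, which is exactly why the $r=1$ case is stated separately as $K_1^\ast=K_1^\perp-\|\alpha^f_3\|^2/(2K_1^\perp)$.

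The main obstacle, and the source of the asymmetric factors $(K_{r-1}^\perp)^2$ versus $K_r^\perp$ in the two terms, is the normalization bookkeeping. The vectors $\alpha^f_{r+1}(Z,\dots,Z)$ and $\alpha^f_{r+1}(JZ,Z,\dots,Z)$ that span $N_r^f$ are not orthonormal; they trace the curvature ellipse $\mathcal{E}^f_r$ with semi-axes $\kappa_r\ge\mu_r$, and the orthonormal frame $\{e_{2r+1},e_{2r+2}\}$ is obtained from them only after a rotation and rescaling governed by $\kappa_r,\mu_r$, hence by $K_r^\perp=2\kappa_r\mu_r$ and $\|\alpha^f_{r+1}\|^2=2^r(\kappa_r^2+\mu_r^2)$. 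In the up-coupling this rescaling of the $N_r^f$-frame is what places $K_r^\perp$ in the denominator, while in the down-coupling the coupling is naturally expressed through $\alpha^f_{r+1}$ but must be re-expressed against the orthonormal $N_{r-1}^f$-frame built from $\alpha^f_r$, and it is this second, nested normalization that produces $\|\alpha^f_r\|^2$ and $(K_{r-1}^\perp)^2$ and leaves a cross-term $\kappa_r\mu_r=K_r^\perp/2$ in the numerator. Tracking these nested rescalings consistently through the wedge products is the delicate part; I would carry the whole computation in the complex functions $H_\alpha$, where the orthonormalization and the area and length relations \eqref{elipsi}, \eqref{si} are most transparent. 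An equivalent but arguably cleaner route is to pass to the complexified harmonic sequence of $f$, in which the coupling coefficients become ratios of norms of consecutive terms and $K_r^\ast$ emerges directly as a $\partial\bar\partial$-expression, the stated formula then following from the Toda-type relations these norms satisfy.
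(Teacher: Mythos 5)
First, a point of order: the paper itself contains no proof of Proposition \ref{5} --- it is quoted from Asperti's paper \cite{A} (``cf.\ \cite{A}''), so there is no internal argument to compare yours against. Judged on its own merits, your moving-frames plan is the right one, and every structural claim in it is correct: the intrinsic curvature of $N_r^f$ is read off from $d\psi_r$ for the induced connection; $\nabla^\perp$ does map sections of $N_r^f$ into $N_{r-1}^f\oplus N_r^f\oplus N_{r+1}^f$; the Ricci-equation term $\langle R^\perp(e_1,e_2)e_{2r+1},e_{2r+2}\rangle$ vanishes for $r\ge 2$ (shape operators of directions orthogonal to $N_1^f$ are zero) and equals $K_1^\perp$ for $r=1$, which is exactly why the two cases in the statement differ; and the up- and down-couplings, carried by $\alpha^f_{r+2}$ and $\alpha^f_{r+1}$, are indeed responsible for the terms $-\Vert\alpha^f_{r+2}\Vert^2/(2^rK_r^\perp)$ and $K_r^\perp\Vert\alpha^f_r\Vert^2/\bigl((K_{r-1}^\perp)^2 2^{r-2}\bigr)$ respectively.

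The gap is that the proposal stops precisely where the content of the proposition lies: the wedge products are never evaluated, and ``once the wedges are rewritten \dots and simplified'' conceals the entire computation in a statement whose whole point is the exact constants. The up-coupling is genuinely easy (it yields $-\Vert\alpha^f_{r+2}\Vert^2/(2^rK_r^\perp)$ with no assumption on the ellipses), but the down-coupling is not: to express the forms $\omega_a^{2r+1},\omega_a^{2r+2}$ ($a$ an index in $N_{r-1}^f$) through $\alpha^f_{r+1}$, you must invert the passage from the orthonormal frame of $N_{r-1}^f$ to the non-orthogonal pair $\alpha^f_r(e_1,\dots,e_1)$, $\alpha^f_r(e_1,\dots,e_1,e_2)$, i.e.\ invert the Gram matrix of $\{\alpha_r^{(r,0)},\overline{\alpha_r^{(r,0)}}\}$, whose determinant is proportional to $(K_{r-1}^\perp)^2$; and when $\Phi_{r-1}\neq 0$ the lowering of $\alpha_{r+1}^{(r+1,0)}$ has components along \emph{both} $\alpha_r^{(r,0)}$ and its conjugate, so one must track two coefficients, not a single ``ratio of norms.'' These contributions do recombine into the stated first term, but none of that cancellation is exhibited in your text. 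Relatedly, your closing ``cleaner route'' is unreliable: the harmonic-sequence picture in which coupling coefficients are ratios of consecutive norms is literally valid only where the Hopf differentials vanish (otherwise the harmonic sequence of $f$ and the sequence of higher fundamental forms no longer coincide), and the Toda-type identities you would want to quote --- in this paper, Proposition \ref{3i} --- are established only for exceptional surfaces and sit downstream of Proposition \ref{5}, so that shortcut is either unavailable in the generality needed or circular.
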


Let $f\colon M\to\mathbb{S}^n$ be a minimal isometric immersion. If $M$ is
simply connected, there exists a one-parameter \emph{associated family} of minimal
isometric immersions $f_\theta\colon M\to\mathbb{S}^n,$ where  $\theta\in\mathbb{S}^1=[0,\pi).$ To see this, for each
$\theta\in\mathbb{S}^1$ consider the orthogonal parallel tensor field 
$$
J_{\theta}=\cos\theta I+\sin\theta J,
$$
where $I$ is the identity endomorphism of the tangent bundle and $J$ is the complex
structure of $M$ induced by
the metric and the orientation.  Then, the symmetric section $\alpha^f(J_\theta\cdot,
\cdot)$ of the bundle $\text{Hom}(TM\times TM,N_f M)$ satisfies the Gauss, Codazzi and
Ricci equations, with respect to the same normal connection; see \cite{DG2} for details. 
Therefore, there exists a minimal isometric immersion  $f_{\theta}\colon M\to
\mathbb{S}^n$ whose second fundamental form is
\begin{equation}
\label{sff}
\alpha^{f_{\theta}}(X,Y)=T_\theta\alpha^f(J_{\theta}X,Y),
\end{equation}
where $T_\theta\colon N_fM\to N_{f_{\theta}}M$ is the parallel 
vector bundle isometry that identifies the normal subspaces
$N_s^f$ with $N_s^{f_\theta}$, $s\geq 1.$

\section{Hopf differentials and Exceptional surfaces}

Let $f\colon M\to\mathbb{S}^n$ be a minimal surface. The complexified tangent bundle
$TM\otimes \mathbb{C}$ is decomposed into the eigenspaces $T^{\prime}M$ and $T^{\prime \prime}M$ of the complex structure
$J$,  corresponding to the
eigenvalues $i$ and $-i.$  The $(r+1)$-th fundamental form $\alpha^f_{r+1}$, which takes values in
the normal subbundle $N_{r}^f$, can be complex linearly extended to $TM\otimes
\mathbb{C}$ with values in the complexified vector bundle $N_{r}^f\otimes \mathbb{C}$
and then decomposed into its $(p,q)$-components, $p+q=r+1,$ which are tensor
products of $p$ differential 1-forms vanishing on $T^{\prime \prime}M$ and $q$ differential 1-forms vanishing
on $T^{\prime}M.$ The minimality of $f$ is equivalent to the vanishing of the
$(1,1)$-part of the second fundamental form. Hence, the $(p,q)$-components of $\alpha^f_{r+1}$
vanish unless $p=r+1$ or $p=0,$ and consequently for a local complex coordinate $z$
on $M$, we have the following decomposition 
\begin{equation*}
\alpha^f_{r+1}=\alpha_{r+1}^{(r+1,0)}dz^{r+1}+\alpha_{r+1}^{(0,r+1)}d\bar{z}^
{r+1},
\end{equation*}
where 
\begin{equation*}
\alpha_{r+1}^{(r+1,0)}=\alpha^f_{r+1}(\partial,\dots,\partial),\;\;\alpha_{r+1}^{(0,r+1)}=\overline{\alpha_{r+1}^{(r+1,0)}}\;\;\;\text{and}\;\;\;\partial ={\frac{1}{2}}\big({\frac{\partial}{\partial x}}-i{\frac{\partial}{\partial y}}\big).
\end{equation*}

The \textit{Hopf differentials} are the differential forms (see \cite{V})
\begin{equation*}
\Phi _{r}=\langle \alpha_{r+1}^{(r+1,0)},\alpha_{r+1}^{(r+1,0)}\rangle dz^{2r+2}
\end{equation*}
of type $(2r+2,0),r=1,\dots,[(n-1)/2],$ where $\langle \cdot,\cdot\rangle$ denotes the extension of the usual Riemannian metric
of $\mathbb{S}^n$ to a complex bilinear form. These forms are
defined on the open subset where  the minimal surface is regular and are independent of the choice of coordinates, while
$\Phi _{1}$ is globally well defined.

Let $\{e_1,e_2\}$ be a local orthonormal frame in the tangent bundle.
It will be convenient to use complex vectors, and we put
\begin{equation*}
\text{ }E=e_1-ie_2\;\;
\text{and}\;\; \phi =\omega _{1}+i\omega _2,
\end{equation*}
where $\{\omega_1,\omega_2\}$ is the dual frame. We choose a local complex
coordinate $z=x+iy$ such that $\phi =Fdz.$

From the definition of Hopf differentials, we easily obtain 
\begin{equation*}
\Phi _{r}={\frac{1}{4}}\left({\overline{H}_{2r+1}^2}+{\overline{H}_{2r+2}^2}
\right) \phi^{2r+2}.
\end{equation*}

Moreover, using (\ref{prwthsxeshden}) and (\ref{deuterhsxeshden}), we find that
\begin{equation}\label{what}
\left\vert \langle \alpha_{r+1}^{(r+1,0)},\alpha_{r+1}^{(r+1,0)}\rangle \right\vert
^2=\frac{F^{2r+2}}{2^{2r+4}}\left(\Vert \alpha^f_{r+1}\Vert
^4-4^r(K_r^{\perp})^2\right).
\end{equation}
Thus, the zeros of $\Phi _r$ are precisely the points where the $r$-th curvature ellipse $
\mathcal{E}^f\sb r$ is a circle. Being $s$-isotropic is equivalent to $\Phi_r=0$ for any
$1\le r\le s.$

The Codazzi equation implies that $\Phi _{1}$ is always holomorphic (cf. \cite{Ch,ChW}).
Besides $\Phi_1$, the rest Hopf differentials are not always holomorphic. The following
characterization of the holomorphicity of Hopf differentials was given in \cite{V08}, in
terms of the eccentricity of curvature ellipses of higher order. 

\begin{theorem}\label{ena}
Let $f\colon M\to \mathbb{S}^n$ be a minimal surface. Its Hopf differentials
$\Phi _{2},\dots,\Phi_{r+1}$ are holomorphic if and only if the higher curvature ellipses
have constant eccentricity up to order $r.$
\end{theorem}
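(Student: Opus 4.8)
The plan is to recast everything in terms of the complex quantities $H_{2r+1},H_{2r+2}$ and to run an induction on $r$ driven by the complexified Codazzi equations. First I would introduce $\beta_r^{\pm}=H_{2r+1}\pm iH_{2r+2}$, so that the identity for the Hopf differentials preceding (\ref{what}) becomes
\[
\Phi_r=\tfrac14\,\overline{\beta_r^{+}\beta_r^{-}}\,\phi^{2r+2}.
\]
Using (\ref{prwthsxeshden}), (\ref{deuterhsxeshden}), (\ref{si}) and (\ref{elipsi}), and fixing the orientation of $N_r^f$ so that $|\beta_r^{+}|\ge|\beta_r^{-}|$, one checks that $|\beta_r^{\pm}|=\kappa_r\pm\mu_r=a_r^{\pm}$; this recovers (\ref{what}) and shows that, up to the conformal factor, $|\Phi_r|$ is the product $a_r^{+}a_r^{-}=\kappa_r^2-\mu_r^2$. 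Since $\varepsilon_r^2=1-(\mu_r/\kappa_r)^2$, constancy of the eccentricity $\varepsilon_r$ is equivalent to constancy of the ratio $a_r^{-}/a_r^{+}=|\beta_r^{-}|/|\beta_r^{+}|$. The base of the induction is the classical fact, recalled above, that $\Phi_1$ is holomorphic with no condition imposed.

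Next I would pass to the Codazzi equations. Writing $\Phi_{r+1}=\psi_{r+1}\,dz^{2r+4}$ with $\psi_{r+1}=\langle\alpha_{r+2}^{(r+2,0)},\alpha_{r+2}^{(r+2,0)}\rangle$, holomorphicity means $\partial_{\bar z}\psi_{r+1}=0$. Because the bilinear form $\langle\,\cdot\,,\,\cdot\,\rangle$ and the normal connection are compatible, $\partial_{\bar z}\psi_{r+1}$ is governed by the antiholomorphic covariant derivative $\nabla''\alpha_{r+2}^{(r+2,0)}$, and only its component in $N_{r+1}^f$ survives the pairing with $\alpha_{r+2}^{(r+2,0)}\in N_{r+1}^f\otimes\mathbb{C}$, the components in $N_r^f$ and $N_{r+2}^f$ being orthogonal to it. The generalized Codazzi equation for $\alpha^f_{r+2}$ expresses this $N_{r+1}^f$-component through the intrinsic connection form $\omega_{2r+3,2r+4}$ of $N_{r+1}^f$ (whose curvature is the $K_{r+1}^{\ast}$ of Proposition \ref{5}) together with the coupling down to $N_r^f$, the latter reproducing $\alpha_{r+1}^{(r+1,0)}$ and hence the lower differential $\Phi_r$. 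After inserting these identities and simplifying, I expect a relation of the shape
\[
\partial_{\bar z}\psi_{r+1}
= c_r\,\partial_{\bar z}\psi_r
+ a_{r+1}^{+}a_{r+1}^{-}\,\partial_{\bar z}\log\frac{a_r^{-}}{a_r^{+}}\,,
\]
with $c_r$ a function of the connection forms and $a$-invariants.

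Finally I would induct on $r$. Assume $\Phi_2,\dots,\Phi_r$ are holomorphic, which by the inductive hypothesis is equivalent to constancy of $\varepsilon_1,\dots,\varepsilon_{r-1}$. Then $\partial_{\bar z}\psi_r=0$, so the first term above drops out and, on the regular open dense subset where the surface is substantial up to order $r+1$ and hence $a_{r+1}^{\pm}>0$, the equation $\partial_{\bar z}\psi_{r+1}=0$ reduces to $\partial_{\bar z}\log(a_r^{-}/a_r^{+})=0$. As $a_r^{-}/a_r^{+}$ is real, this forces all first derivatives to vanish, so the ratio is constant, i.e. $\varepsilon_r$ is constant by the first paragraph; the converse is read off the same formula. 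Thus $\Phi_{r+1}$ is holomorphic if and only if $\varepsilon_r$ is constant, which advances the induction and yields the equivalence for the collections $\Phi_2,\dots,\Phi_{r+1}$ and $\varepsilon_1,\dots,\varepsilon_r$.

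\textbf{Main obstacle.} The crux is the second step: deriving the complexified higher Codazzi equations and organizing them so that the coefficient multiplying $\partial_{\bar z}\log(a_r^{-}/a_r^{+})$ is exactly the nonvanishing product $a_{r+1}^{+}a_{r+1}^{-}$, while the remaining connection contributions either cancel or are absorbed into the $\partial_{\bar z}\psi_r$-term. This demands careful bookkeeping of the $(p,q)$-decomposition of $\nabla^{\perp}\alpha^f_{r+2}$ and of the intrinsic connection forms $\omega_{2r+1,2r+2}$ of the plane bundles, and in particular the separation of the magnitude part (controlling $a_r^{-}/a_r^{+}$) from the phase part (governed by the normal connection and automatically consistent once the lower differentials are holomorphic) is delicate. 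A secondary technical point is to work throughout on the regular, suitably substantial open dense subset, where $a_r^{\pm}$ and $a_{r+1}^{\pm}$ are positive and the division performed above is legitimate.
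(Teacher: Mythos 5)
Since the paper itself offers no proof of Theorem \ref{ena} --- it is quoted from \cite{V08} --- your argument must stand alone, and as it stands it does not. The algebraic preliminaries are fine: with $\beta_r^{\pm}=H_{2r+1}\pm iH_{2r+2}$ one has $\Phi_r=\tfrac14\,\overline{\beta_r^{+}\beta_r^{-}}\,\phi^{2r+2}$, the set $\{|\beta_r^{+}|,|\beta_r^{-}|\}$ equals $\{a_r^{+},a_r^{-}\}$ by (\ref{prwthsxeshden}), (\ref{deuterhsxeshden}) and (\ref{si}), and constancy of $\varepsilon_r$ amounts to constancy of $a_r^{-}/a_r^{+}$; this recovers (\ref{what}). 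The genuine gap is the second step: the displayed Codazzi-type identity that drives both directions of your induction is never derived --- you say you ``expect'' it --- and it is not a deferrable technicality, it \emph{is} the theorem. A proof that postulates the structure equation in exactly the shape needed for the induction to close has assumed the conclusion.

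Moreover, the identity you conjecture cannot be correct, because its source term degenerates precisely where the theorem retains content. Up to a conformal factor, $a_{r+1}^{+}a_{r+1}^{-}$ is $|\psi_{r+1}|$ itself; hence if $\Phi_{r+1}\equiv0$ (the $(r+1)$-st ellipse is a circle at every point --- certainly a holomorphic Hopf differential), your identity reads $0=0$ and gives no control on $\varepsilon_r$, while the theorem asserts that $\varepsilon_r$ is then constant. This case actually occurs and matters in this very paper: by Proposition \ref{hola}(iii), the surfaces $\hat g$ of Section 6 have $\hat\Phi_s\equiv0$ for $s\equiv0,1\;\mathrm{mod}\;3$, yet their ellipses at levels $s\equiv2\;\mathrm{mod}\;3$ have nonzero constant eccentricity; your induction would already stall in the passage from $\Phi_2$ to $\Phi_3$ for these examples. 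Carrying out the computation you outline (minimality kills the mixed components, so $(\nabla^{\perp}_{\bar\partial}W_s)^{N_{s+1}^f}=0$ for $W_s=\alpha_{s+1}^{(s+1,0)}$; combine this with the band structure $\nabla^{\perp}_X\Gamma(N_t^f)\subset\Gamma(N_{t-1}^f\oplus N_t^f\oplus N_{t+1}^f)$ and the Ricci equation), one finds on the set where $K_r^{\perp}\neq0$ that
\[
\partial_{\bar z}\psi_{r+1}=2b\,\psi_{r+1}-2q\,\langle W_{r+1},\overline{W_{r+1}}\rangle,
\]
where $b$ is the $W_r$-component of $(\nabla^{\perp}_{\bar\partial}W_r)^{N_r^f}$ and $q$ is the $\overline{W_r}$-component of $(\nabla^{\perp}_{\partial}W_r)^{N_r^f}$. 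The inhomogeneous term here is the Hermitian square $\langle W_{r+1},\overline{W_{r+1}}\rangle$, proportional to $\kappa_{r+1}^{2}+\mu_{r+1}^{2}$ and hence nowhere zero on the substantial regular set --- not your $a_{r+1}^{+}a_{r+1}^{-}$: in particular $\Phi_{r+1}\equiv0$ forces $q\equiv0$, which is exactly the kind of nontrivial conclusion your version loses. Converting the resulting condition $b\,\psi_{r+1}=q\,\langle W_{r+1},\overline{W_{r+1}}\rangle$ into the constancy of $\varepsilon_r$, by expressing $b$ and $q$ through the level-$r$ moduli, phases and connection form, is the substance of the proof in \cite{V08}, and none of it is present in your proposal.
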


A minimal surface in $\mathbb{S}^n$ is called \textit{$r$-exceptional} if all Hopf
differentials up to order $r+1$ are holomorphic, or equivalently if all higher curvature
ellipses up to order $r$ have constant eccentricity. A minimal surface in $\mathbb{S}^n$
is called \textit{exceptional} if it is $r$-exceptional for $r=[(n-1)/2-1].$ 
This class of minimal surfaces may be viewed as the next simplest to superconformal ones.
In fact, superconformal minimal surfaces are indeed exceptional, for superconformal
minimal surfaces are characterized by the fact that all Hopf differentials vanish up to the
last but one, which is equivalent to the fact that all higher curvature ellipses are circles up to the last but one. There is an abundance of exceptional surfaces.
Besides flat minimal surfaces, we show in Section 6  that minimal surfaces of the type $(\ast\ast)$ are indeed
exceptional.

We recall some results for exceptional surfaces proved in \cite{V08}, that will be used in
the proofs of our main results.

\begin{proposition}\label{3i}
Let $f\colon M\to\mathbb{S}^n$ be an $(r-1)$-exceptional surface. At regular
points the following hold:

(i) For any $1\leq s\leq r-1,$ we have
\begin{equation*}
\Delta \log \left\Vert \alpha_{s+1}\right\Vert ^2=2\big((s+1)K-K_s^{\ast}\big),
\end{equation*}
where $\Delta $ is the Laplacian operator with respect to the induced metric $ds^{2}.$

(ii) If $\Phi _{r}\neq 0$\textit{, then}
\begin{equation*}
\Delta \log \left(\left\Vert \alpha_{r+1}\right\Vert^2+2^rK_r^{\perp}\right) =2\big((r+1)K-K_r^{\ast}\big)
\end{equation*}
and 
\begin{equation*}
\Delta \log \left(\left\Vert \alpha_{r+1}\right\Vert^2-2^rK_r^{\perp}\right) =2\big((r+1)K+K_r^{\ast}\big).
\end{equation*}

(iii) If $\Phi _{r}=0$\textit{, then}
\begin{equation*}
\Delta \log \left\Vert \alpha_{r+1}\right\Vert^2=2\big((r+1)K-K_r^{\ast}\big).
\end{equation*}

(iv) The intrinsic curvature of the $s$-th
normal bundle $N_s^f$\textit{\ is} $K_{s}^{\ast}=0$ 
if $1\leq s\leq r-1$ and $\Phi _s\neq 0.$
\end{proposition}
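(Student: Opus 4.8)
The plan is to show that the four statements are not independent: parts (i) and (iv) are corollaries of the two identities in (ii), so the whole proposition reduces to proving those. The first step is to rewrite everything through the $a$-invariants. Since $\|\alpha^f_{r+1}\|^2\pm 2^rK_r^\perp=2^r(\kappa_r\pm\mu_r)^2=2^r(a_r^\pm)^2$ by (\ref{elipsi}) and (\ref{si}), and, introducing the complex functions $G_r^\pm:=H_{2r+1}\mp iH_{2r+2}$, a direct computation from (\ref{prwthsxeshden}) and (\ref{deuterhsxeshden}) gives $|G_r^\pm|^2=(a_r^\pm)^2$, the two identities of (ii) become equivalent to
\[
\Delta\log|G_r^+|^2=2\big((r+1)K-K_r^\ast\big),\qquad \Delta\log|G_r^-|^2=2\big((r+1)K+K_r^\ast\big).
\]
I would work in a local complex coordinate $z$ with $\phi=Fdz$, so that the induced metric is $|F|^2|dz|^2$, $\Delta=\tfrac{4}{|F|^2}\partial\bar\partial$, and $\Delta\log|F|^2=-2K$. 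It is then natural to prove the two identities by separately computing their \emph{sum} and their \emph{difference}.

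The sum is the clean half and isolates the holomorphic input. From the expression for the Hopf differentials we have $\Phi_r=\tfrac14\bar G_r^+\bar G_r^-\,\phi^{2r+2}$, so its coefficient $\Psi=\tfrac14\bar G_r^+\bar G_r^-F^{2r+2}$ is a holomorphic function precisely because $f$ is $(r-1)$-exceptional, hence $\Phi_r$ is holomorphic. Thus $\partial\bar\partial\log|\Psi|^2=0$ away from the isolated zeros of $\Psi$. Expanding $\log|\Psi|^2=\mathrm{const}+\log|G_r^+|^2+\log|G_r^-|^2+(2r+2)\log|F|^2$ and applying $\tfrac{4}{|F|^2}\partial\bar\partial$ yields
\[
\Delta\log|G_r^+|^2+\Delta\log|G_r^-|^2=-(2r+2)\,\Delta\log|F|^2=4(r+1)K,
\]
which is exactly the sum of the two identities of (ii).

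The difference is the substantive half and is where $K_r^\ast$ enters, through the Ricci equation for the plane subbundle $N_r^f$. I would pass to the adapted normal frame $\{e_{2r+1},e_{2r+2}\}$ spanning $N_r^f$ and to its $(\pm i)$-eigenframe; since the curvature of the plane bundle $N_r^f$ is $K_r^\ast$, the two eigenline bundles carry opposite curvatures $\pm iK_r^\ast\,dA$, and $G_r^+,G_r^-$ are, up to the conformal weight, the components of $\alpha_{r+1}^{(r+1,0)}$ along them. Computing $\partial\bar\partial\log|G_r^\pm|^2$ through the Codazzi equations, the covariant derivatives of these components couple only to $N_{r-1}^f$ (through the $(0,1)$-part of the covariant derivative) and to $N_{r+1}^f$ (through the $(1,0)$-part); the holomorphicity of $\Phi_1,\dots,\Phi_r$ makes the lower coupling drop out of the difference, leaving
\[
\Delta\log|G_r^+|^2-\Delta\log|G_r^-|^2=-4K_r^\ast.
\]
Together with the sum this gives both identities of (ii), and the degenerate case $a_r^+=a_r^-$, i.e.\ $\Phi_r=0$ so the ellipse is a circle, yields (iii). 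I expect this frame computation — isolating the curvature term $K_r^\ast$ from the Codazzi couplings and checking that the lower-order terms cancel in the difference, together with handling the isolated zeros of $\Phi_r$ where $\log a_r^-$ is singular — to be the main obstacle; one can cross-check the outcome against the formula for $K_r^\ast$ in Proposition \ref{5}.

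Finally, parts (i) and (iv) follow for $1\le s\le r-1$. By Theorem \ref{ena}, $(r-1)$-exceptionality forces each curvature ellipse $\mathcal{E}^f_s$ with $s\le r-1$ to have constant eccentricity, so $\mu_s/\kappa_s$, and hence $\log(a_s^+/a_s^-)^2$, is locally constant on $\{\Phi_s\neq0\}$. Applying the difference identity at level $s$ gives $0=-4K_s^\ast$, that is $K_s^\ast=0$, which is (iv). For (i): if $\Phi_s\equiv0$ the ellipse is a circle and (iii) at level $s$ gives the claim directly; if $\Phi_s\neq0$, constant eccentricity makes $\log\|\alpha^f_{s+1}\|^2$ differ from $\log(a_s^+)^2$ by a local constant, so $\Delta\log\|\alpha^f_{s+1}\|^2=\Delta\log(a_s^+)^2=2\big((s+1)K-K_s^\ast\big)$. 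In both cases (i) holds.
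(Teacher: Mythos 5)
Your algebraic reduction is correct, and the parts you actually prove are proved correctly: indeed $\left\Vert \alpha_{r+1}\right\Vert^2\pm2^rK_r^{\perp}=2^r(a_r^{\pm})^2=2^r|G_r^{\pm}|^2$, the coefficient of $\Phi_r=\tfrac14\overline{G}_r^{+}\overline{G}_r^{-}\phi^{2r+2}$ is holomorphic because $f$ is $(r-1)$-exceptional, so your sum identity $\Delta\log|G_r^{+}|^2+\Delta\log|G_r^{-}|^2=4(r+1)K$ follows exactly as you say; and, granted (ii) and (iii) at every level $s\le r-1$ (which are legitimate to invoke there, since $f$ is automatically $(s-1)$-exceptional), your deductions of (i) and (iv) are valid. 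Note that the paper itself contains no proof of this proposition --- it is quoted from \cite{V08} --- so the comparison is with that reference, whose proof consists precisely of the structure-equation computation that you defer.

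The genuine gap is the ``difference'' identity $\Delta\log|G_r^{+}|^2-\Delta\log|G_r^{-}|^2=-4K_r^{\ast}$ together with case (iii): this is the entire analytic content of the proposition, the only place where $K_r^{\ast}$ and the exceptionality hypothesis really enter, and you leave it as a sketch, flagging it yourself as ``the main obstacle.'' Moreover, the mechanism you describe is not quite the one that works. The formula $\Delta\log\Vert s\Vert^2=-2\,(\mathrm{curvature})$ holds for a section $s$ of a Hermitian line bundle only when $s$ is holomorphic for the Koszul--Malgrange structure; for a non-holomorphic section there are correction terms (quadratic in $D^{0,1}s$, plus derivatives of the coupling coefficients), and there is no visible reason why these should ``cancel in the difference'' of the $+$ and $-$ components. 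What must actually be shown is that $(r-1)$-exceptionality forces each component $\Psi_r^{\pm}$ of $\alpha_{r+1}^{(r+1,0)}$ along the eigenbundles $\Lambda_r^{\pm}$ of the complex structure of $N_r^f$ to be a holomorphic section of $\big((T^{\prime}M)^{\ast}\big)^{\otimes(r+1)}\otimes\Lambda_r^{\pm}$; then both identities of (ii), and also (iii) in the degenerate case $\Psi_r^{-}\equiv0$ (where your sum/difference scheme breaks down, since $\log|G_r^{-}|^2$ is not defined), follow at once from the curvature count $\mathrm{curv}(\Lambda_r^{\pm})=\pm K_r^{\ast}$. Establishing this holomorphicity is where the work lies. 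For instance, for $r=2$ the Codazzi and Ricci equations give $D^{0,1}\Psi_2^{\pm}=-b\,\overline{\Psi_2^{\mp}}$, where $b$ is the $\alpha(\bar\partial,\bar\partial)$-component of $\big(\nabla^{\perp}_{\partial}\alpha(\partial,\partial)\big)^{N_1^f}$, while $\bar\partial$ of the coefficient of $\Phi_2$ equals $-2b\,\Vert\alpha_3(\partial,\partial,\partial)\Vert^2$; thus holomorphicity of $\Phi_2$ forces $b=0$, i.e.\ the coupling vanishes identically rather than cancelling in a difference. For general $r$ this must be propagated by an induction through the Frenet-type equations of all the lower normal bundles, using the holomorphicity of all of $\Phi_1,\dots,\Phi_r$ (not just of $\Phi_r$), together with an argument at the isolated zeros. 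That inductive computation is the substance of the proposition, and it is missing from your proposal.
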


We will need the following proposition which was proved in \cite{V08}.
\begin{proposition}\label{neoksanaafththfora}
Let $f\colon M\to \mathbb{S}^n$ be an $r$-exceptional surface. Then the set $L_0$,
where $f$ fails to be regular, consists of isolated points and all $N_s^f$'s and the Hopf
differentials $\Phi_s$'s extend smoothly to $L_0$ for any $1\leq s\leq r.$ 
\end{proposition}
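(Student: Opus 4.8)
The plan is to argue by induction on $s$, for $1\le s\le r$, driven entirely by the holomorphicity of the Hopf differentials that is built into the definition of an $r$-exceptional surface. By hypothesis $\Phi_1,\dots,\Phi_{r+1}$ are holomorphic, so by Theorem \ref{ena} each curvature ellipse $\mathcal{E}^f_s$, $1\le s\le r$, has constant eccentricity on every connected component of the regular set. Since on the regular set $N^f_s$ has rank two, we have $\mu_s>0$ there, whence $\varepsilon_s\in[0,1)$ is a genuine constant and $\mu_s=\kappa_s(1-\varepsilon_s^2)^{1/2}$ with a fixed positive proportionality factor. This is the crucial reduction: it shows that $N^f_s$ can fail to have rank two only where $\kappa_s=0$, that is, on the common zero set of $\kappa_s$ and $\mu_s$, which by (\ref{si}) is exactly $\{\|\alpha^f_{s+1}\|=0\}$. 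Thus the segment-type degeneracy of the ellipse is excluded and only the point-type degeneracy $\alpha^{(s+1,0)}_{s+1}=0$ survives.

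It then remains to show that $\{\alpha^{(s+1,0)}_{s+1}=0\}$ is discrete and that the data extend across it. First I would establish that $V_s:=\alpha^{(s+1,0)}_{s+1}$ is a holomorphic section of $N^f_s\otimes\mathbb{C}$ with respect to the $\bar\partial$-operator induced by the normal connection; this is the analytic counterpart of the holomorphicity of $\Phi_s$ and $\Phi_{s+1}$ and follows from the Codazzi equations as in \cite{V08}. Consequently $\|V_s\|$ is a function of absolute value type in the sense of Section 4, and such functions have isolated zeros unless they vanish identically; alternatively one reaches the same conclusion from Proposition \ref{3i}, since $\Delta\log\|\alpha^f_{s+1}\|^2$ is smooth on the regular set and the characterization of absolute value type functions of Section 4 then applies. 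Ruling out $V_s\equiv0$ (which cannot occur below the top level of a substantial immersion), the zero set is discrete, and letting $s$ range over $1,\dots,r$ shows that $L_0$ consists of isolated points.

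For the extension, fix an isolated zero and a holomorphic trivialization of $N^f_s\otimes\mathbb{C}$ near it; then $V_s=z^kH(z)$ with $H$ holomorphic and $H(0)\ne0$, where $k$ is the order of vanishing. The Hopf differential $\Phi_s=\langle V_s,V_s\rangle\,dz^{2s+2}$ is holomorphic and bounded near the puncture, so by Riemann's removable singularity theorem it extends holomorphically, giving the extension of $\Phi_s$. For $N^f_s$ itself, the scalar factor $z^k$ is irrelevant to the real two-plane $\mathrm{span}_{\mathbb{R}}\{\mathrm{Re}\,V_s,\mathrm{Im}\,V_s\}$, so as $z\to0$ this plane converges to $\mathrm{span}_{\mathbb{R}}\{\mathrm{Re}\,H(0),\mathrm{Im}\,H(0)\}$, which is genuinely two-dimensional because the constant value $\varepsilon_s<1$ keeps $\mathrm{Re}\,H(0)$ and $\mathrm{Im}\,H(0)$ linearly independent. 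Hence $N^f_s$ extends as a smooth rank-two subbundle across the isolated points, and with it the projections defining $\alpha^f_{s+1}$, which feeds the induction at the next level.

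The main obstacle I anticipate is twofold: establishing cleanly that $V_s$ is $\bar\partial$-holomorphic (so that $\|V_s\|$ is genuinely of absolute value type and the local normal form $V_s=z^kH$ with $H(0)\ne0$ is available), and handling the inductive interdependence, since $\alpha^f_{s+1}$ is defined only once $N^f_1,\dots,N^f_{s-1}$ are known to extend smoothly. Both are controlled by propagating the holomorphicity of the Hopf differentials through the Codazzi equations, and it is the constant-eccentricity reduction that prevents the normal bundle from degenerating along a curve rather than at isolated points.
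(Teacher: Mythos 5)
A preliminary remark: the paper itself contains no proof of Proposition \ref{neoksanaafththfora}; it is quoted from \cite{V08} (``We will need the following proposition which was proved in \cite{V08}''), so your attempt can only be compared with the method of that reference. Your overall architecture is the right one and matches that circle of ideas: constant eccentricity from Theorem \ref{ena} plus density of the regular set excludes segment-type degeneration, so $N^f_s$ can only degenerate on $\{\alpha^f_{s+1}=0\}$; a local normal form $V_s=z^kH$ with $H(0)\neq 0$ then gives discreteness of $L_0$, extension of $\Phi_s$ by removable singularities, and (using $\varepsilon_s<1$ to keep $\mathrm{Re}\,H(0)$, $\mathrm{Im}\,H(0)$ independent) the smooth extension of $N^f_s$; and the argument must indeed be run inductively in $s$.

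The genuine gap is at the load-bearing step: the claim that $V_s=\alpha^{(s+1,0)}_{s+1}$ is holomorphic for the $\bar\partial$-operator of the connection projected to $N^f_s$, which you justify by ``follows from the Codazzi equations as in \cite{V08}''. For $s\geq 2$ this does not follow from Codazzi, and the appeal to \cite{V08} is circular in a blind proof. What Codazzi-type arguments give for every minimal surface is that $\nabla^\perp_{\bar\partial}V_s$ lies in $(N^f_{s-1}\oplus N^f_s)\otimes\mathbb{C}$, and the $N^f_{s-1}$-component is unavoidably nonzero: differentiating $\langle V_s,\overline{V}_{s-1}\rangle=0$ and using $(\nabla^\perp_{\partial}V_{s-1})^{N_s}=V_s$ yields $\langle\nabla^\perp_{\bar\partial}V_s,\overline{V}_{s-1}\rangle=-\|V_s\|^2$, so $V_s$ is never holomorphic in the full normal bundle, unlike the case $s=1$. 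For the projected connection, write $\nabla^{N_s}_{\bar\partial}V_s=a\,V_s+b\,\overline{V}_s$; holomorphicity of $\Phi_s$ is exactly the single scalar identity $\tfrac12\bar\partial\langle V_s,V_s\rangle=a\langle V_s,V_s\rangle+b\|V_s\|^2=0$, one linear relation between two unknown coefficients, and holomorphicity of $\Phi_{s+1}$ constrains the next level, not $a$ and $b$. So the central analytic fact — that $V_s$ has a holomorphic-type structure giving finite, even-order vanishing and the factorization $V_s=z^kH$ — is asserted rather than proved, and proving it is precisely the content of the proposition: it requires the full structure equations of exceptional surfaces (for instance the flatness $K^*_s=0$ of Proposition \ref{3i}(iv) when $\Phi_s\neq 0$, and a separate treatment of the isotropic case $\Phi_s\equiv 0$, where one only controls the relation $\nabla^{N_s}_{\bar\partial}V_s=a\,V_s$ and hence a holomorphic-type, not holomorphic, section). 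Your fallback route via Proposition \ref{3i} and absolute-value-type functions is in fact closer to the actual mechanism in \cite{V08} and \cite{ET}, but as stated it is also incomplete: Lemma \ref{dena} requires $\log u$ to be harmonic off the zero set, so the smooth right-hand side $2\big((s+1)K-K^*_s\big)$ must first be absorbed by a local solution of a Poisson equation, and absolute-value-type behavior of the scalar $\|\alpha_{s+1}\|^2$ gives discreteness of zeros but not, by itself, the smooth extension of the plane bundle $N^f_s$ across them. You correctly identified this as ``the main obstacle''; it is not overcome in the proposal.
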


\section{Absolute value type functions}

For the proof of our results, we shall use the notion of absolute value type functions
introduced in \cite{EGT,ET}. A smooth complex valued function $p$ defined on a
Riemann surface is called of \textit{holomorphic type} if locally $p=p_0p_1,$ where $p_0$
is holomorphic and $p_1$ is smooth without zeros. A function $u\colon M\to\lbrack 0,+
\infty)$ defined on a Riemann surface $M$ is called of \textit{absolute value type} if there
is a function $p$ of holomorphic type on $M$ such that $u=|p|.$

The zero set of such a function on a connected compact oriented surface $M$ is either
isolated or the whole of $M$, and outside its zeros the function is smooth. If $u$ is a
nonzero absolute value type function, i.e., locally $u=|t_0|u_1$, with $t_0$ holomorphic,
the order $k\ge1$ of any point $p\in M$ with $u(p)=0$ is the order of $t_0$ at $p.$ Let
$N(u)$ be the sum of the orders for all zeros of $u.$ Then $\Delta\log u$ is bounded on
$M\smallsetminus\left\{u=0\right\}$ and its integral is computed in the following lemma
that was proved in \cite{EGT,ET}.

\begin{lemma}\label{forglobal}
Let $(M,ds^2)$ be a compact oriented two-dimensional Riemannian manifold with area
element $dA.$

(i) If $u$ is an absolute value type function on $M,$ then 
\begin{equation*}
\int_{M}\Delta \log udA=-2\pi N(u).
\end{equation*}

(ii) If $\Phi $ is a holomorphic symmetric $(r,0)$-form on $M,$ then either $\Phi =0$ or
$N(\Phi)=-r\chi (M),$ where $\chi (M)$ is the Euler-Poincar\'{e} characteristic of $M.$
\end{lemma}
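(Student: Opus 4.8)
The plan is to prove part (i) first and then deduce part (ii) from it together with the Gauss--Bonnet theorem. The starting point for (i) is the conformal invariance of the quantity $\Delta\log u\,dA$ in dimension two. Writing the metric in a local conformal coordinate $z=x+iy$ as $ds^2=\lambda^2|dz|^2$, one has $\Delta=\lambda^{-2}\Delta_0$ and $dA=\lambda^2\,dx\,dy$, where $\Delta_0=\partial_x^2+\partial_y^2$ is the flat Laplacian; hence $\Delta\log u\,dA=\Delta_0\log u\,dx\,dy$, so the entire computation can be carried out against the flat background. Since $u\not\equiv0$ is of absolute value type, its zeros are isolated and finitely many on the compact surface $M$; near such a zero $p_i$ one has $u=|t_0|u_1$ with $t_0$ holomorphic of order $k_i=\mathrm{ord}_{p_i}(t_0)$ and $u_1$ smooth and positive, so in a centered coordinate $\log u=k_i\log|z|+(\text{smooth})$. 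Away from the zeros $\log u$ is smooth and $\Delta\log u$ is bounded, which makes the integral meaningful.

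I would then excise small geodesic disks $D_\varepsilon(p_i)$ around the zeros and apply the divergence theorem on $M\smallsetminus\bigcup_iD_\varepsilon(p_i)$:
\[
\int_{M\smallsetminus\bigcup_iD_\varepsilon(p_i)}\Delta\log u\,dA=-\sum_i\int_{\partial D_\varepsilon(p_i)}\frac{\partial\log u}{\partial r}\,ds,
\]
the sign arising because the outward normal of the punctured surface points towards each $p_i$. Substituting $\log u=k_i\log|z|+(\text{smooth})$, the smooth part contributes $O(\varepsilon)$, while the logarithmic part gives $-\frac{k_i}{\varepsilon}\,\mathrm{length}(\partial D_\varepsilon(p_i))=-2\pi k_i+O(\varepsilon)$. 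Letting $\varepsilon\to0$ and summing over the zeros yields $\int_M\Delta\log u\,dA=-2\pi\sum_ik_i=-2\pi N(u)$. (This is consistent with the fact that the full distributional Laplacian $\Delta_0\log u=2\pi\sum_ik_i\,\delta_{p_i}+(\text{bounded})$ integrates to zero over the closed surface.)

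For (ii), assume $\Phi\neq0$ and let $|\Phi|$ denote its pointwise norm with respect to the metric. In a conformal coordinate $\Phi=\phi\,dz^r$ with $\phi$ holomorphic, so $|\Phi|=|\phi|\,\lambda^{-r}$; since $\lambda^{-r}$ is smooth and positive, $|\Phi|$ is an absolute value type function whose zeros coincide with those of $\Phi$ and satisfy $N(|\Phi|)=N(\Phi)$. Applying (i) to $u=|\Phi|$ gives $\int_M\Delta\log|\Phi|\,dA=-2\pi N(\Phi)$. On the other hand, away from the zeros $\log|\Phi|=\log|\phi|-r\log\lambda$, and using $\Delta_0\log|\phi|=0$ together with the curvature identity $\Delta_0\log\lambda=-K\lambda^2$ one finds $\Delta\log|\Phi|=rK$. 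Integrating and invoking Gauss--Bonnet, $\int_MK\,dA=2\pi\chi(M)$, we obtain $-2\pi N(\Phi)=r\int_MK\,dA=2\pi r\chi(M)$, that is, $N(\Phi)=-r\chi(M)$; equivalently, this records that a nonzero holomorphic section of $K_M^{\otimes r}$ has $\deg K_M^{\otimes r}=-r\chi(M)$ zeros.

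The main obstacle I expect is the rigorous handling of the singularities at the zeros: one must justify that $\Delta\log u$ is genuinely bounded on $M\smallsetminus\{u=0\}$ so that the left-hand integral converges, and that in the limit $\varepsilon\to0$ only the logarithmic poles survive while the smooth remainders vanish. This is precisely the Poincar\'e--Lelong phenomenon $\Delta_0\log|t_0|=2\pi\sum_ik_i\,\delta_{p_i}$, and the punctured-disk computation above is the elementary way to make it precise; the absolute-value-type hypothesis is exactly what guarantees the clean local factorization $u=|t_0|u_1$ on which the whole argument rests.
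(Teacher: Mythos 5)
Your proof is correct. Note that the paper does not prove this lemma at all --- it is quoted with a citation to \cite{EGT,ET} --- and your argument (reduction to the flat Laplacian, excision of disks around the zeros plus the divergence theorem for (i), then the curvature identity $\Delta_0\log\lambda=-K\lambda^2$ and Gauss--Bonnet for (ii)) is precisely the standard proof given in those references, so there is nothing to add.
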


The following lemma, that was proved in \cite{N}, provides a sufficient condition for a
function to be of absolute value type.
\begin{lemma}\label{dena}
Let $D$ be a plane domain containing the origin with coordinate $z$ and $u$ be a real
analytic nonnegative function on $D$ such that $u(0)=0.$ If $u$ is not identically zero
and $\log u$ is harmonic away from the points where $u=0$, then $u$ is of absolute
value type and the order of the zero of $u$ at the origin is even.
\end{lemma}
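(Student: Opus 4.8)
The plan is to work locally near the origin and to show that $u$ equals, up to a positive real-analytic factor, the modulus of a single holomorphic monomial. First I would record the elementary reductions. Since $u\geq 0$ is real analytic and not identically zero, its zero set has empty interior, so $\{u>0\}$ is open and dense; on this set $\log u$ is harmonic and hence locally equals the real part of a holomorphic function. The entire difficulty is therefore concentrated at the zero $z=0$, where I must both control the zero set and pin down the precise singular behaviour of $\log u$.

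Let $m\geq 1$ be the vanishing order of $u$ at $0$ (finite, by real analyticity), and let $P_m\not\equiv 0$ be its leading homogeneous part, so that $P_m\geq 0$ on $\mathbb{R}^2$. To prove that the zeros are isolated I would use a blow-up: setting $u_t(z)=t^{-m}u(tz)$, one has $u_t\to P_m$ locally uniformly (in fact in $C^\infty$) as $t\to 0^+$, while $\log u_t=\log u(t\,\cdot)-m\log t$ is harmonic wherever $u_t>0$, since harmonicity is scale invariant. Passing to the limit on the open set $\{P_m>0\}$ shows that $\log P_m$ is harmonic there. Writing $P_m=r^m Q(\theta)$ in polar coordinates and computing the Laplacian gives
\[
\Delta \log P_m=\frac{1}{r^2}\,(\log Q)''(\theta),
\]
so $(\log Q)''=0$; since $Q$ is $2\pi$-periodic, this forces $Q$ to be a positive constant. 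Hence $P_m=c\,(x^2+y^2)^{m/2}$ with $c>0$, which already proves that $m$ is even and that $P_m$ is positive definite. Consequently $u>0$ on a punctured disc about $0$, i.e. the zeros are isolated.

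With $\log u$ now harmonic on a punctured disc $0<|z|<\varepsilon$, I would invoke the standard structure of harmonic functions on an annulus to write $\log u=\lambda\log|z|+\operatorname{Re} g(z)$, where $\lambda\in\mathbb{R}$ absorbs the period of the conjugate differential and $g$ is holomorphic on the punctured disc. The key point is that $u=e^{\log u}$ extends continuously, indeed real-analytically, across $0$: were the Laurent expansion of $g$ to contain any negative power, $\operatorname{Re} g$ would be unbounded above near $0$ and $u$ would blow up along a sequence, a contradiction. Thus $g$ is holomorphic at $0$, and comparing leading terms forces $\lambda=m$. Therefore $u=|z|^{m}e^{\operatorname{Re} g}=\bigl|z^{m}e^{g}\bigr|$ near $0$; putting $p=z^{m}e^{g}$, which is of holomorphic type with holomorphic factor $z^m$ and nonvanishing smooth factor $e^{g}$, we obtain $u=|p|$, so $u$ is of absolute value type and the order of its zero at $0$ equals $m$, which we have shown to be even. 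I expect the main obstacle to be exactly this local analysis at the zero, namely justifying the passage to the blow-up limit for harmonicity and the removal of the principal part of $g$; once those are in hand the remaining identifications are routine.
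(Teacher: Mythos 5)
The paper itself offers no proof of this lemma --- it is quoted from Naka's paper \cite{N} --- so there is nothing internal to compare against; I will assess your argument on its own. Your overall strategy (blow-up at the zero to identify the leading homogeneous part, then the decomposition $\log u=\lambda\log|z|+\operatorname{Re}g$ on the punctured disc) is sound, and the first half is essentially correct: the scaling argument, the harmonicity of $\log P_m$ on $\{P_m>0\}$, and the polar computation forcing $P_m=c\,r^m$ (hence $m$ even and the zero isolated) all go through. Two smaller points there: before writing $(\log Q)''=0$ on all of the circle you must rule out zeros of $Q$ --- this follows because on a component arc of $\{Q>0\}$ the function $\log Q$ is affine, and an affine function cannot tend to $-\infty$ at a finite endpoint --- and at the very end one still has to patch the local factorizations into a single function $p$ of holomorphic type on all of $D$ (e.g.\ a Weierstrass product carrying the zeros times a smooth positive factor), though this is routine.

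The genuine gap is in the removal of the principal part of $g$. You argue: if the Laurent series of $g$ contains negative powers, then $\operatorname{Re}g$ is unbounded above near $0$, ``and $u$ would blow up along a sequence.'' This inference is invalid, because $u=|z|^{\lambda}e^{\operatorname{Re}g}$ and the factor $|z|^{\lambda}$ tends to $0$ when $\lambda>0$ (which is exactly the expected situation, since eventually $\lambda=m$): unboundedness of $\operatorname{Re}g$ alone does not force $\lambda\log|z|+\operatorname{Re}g\to+\infty$ along any sequence. For a pole the step can be patched, since $\operatorname{Re}g$ then grows like a power $|z|^{-k}$ along suitable points, which beats any multiple of $\log(1/|z|)$; but for an essential singularity Casorati--Weierstrass gives only density of values, with no rate relative to $|z|$, so no contradiction follows as written. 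A correct and standard fix: from the boundedness of $u$ near $0$ you get $|e^{g}|=e^{\operatorname{Re}g}=u\,|z|^{-\lambda}\le C|z|^{-\lambda}$ on a punctured disc, so $z^{N}e^{g}$ is bounded for any integer $N\ge\lambda$ and, by Riemann's removable singularity theorem, $e^{g}$ extends meromorphically to $0$; being nonvanishing on the punctured disc, $e^{g}=z^{n_{0}}h$ with $n_{0}\in\mathbb{Z}$, $h$ holomorphic and $h(0)\ne0$. Since $e^{g}$ admits the single-valued logarithm $g$ on the punctured disc, its winding number about the origin is zero, which forces $n_{0}=0$. Hence $e^{g}$, and therefore $g$, extends holomorphically across $0$; after that your comparison giving $\lambda=m$ and the conclusion $u=|z^{m}e^{g}|$ are correct.
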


\section{Pseudoholomorphic curves in $\mathbb{S}^5$}

It is known that the multiplicative structure on the Cayley numbers $\mathbb{O}$ can be
used to define an almost complex structure $J$ on the sphere $\mathbb{S}^6$ in
$\mathbb{R}^7.$ This almost complex structure is not integrable but it is nearly
K{\"a}hler. A \textit{pseudoholomorphic curve} \cite{Br} is a nonconstant smooth map
$g\colon M\to\mathbb{S}^6$ from a Riemann surface $M$ into the nearly K{\"a}hler
sphere $\mathbb{S}^6,$ whose differential is complex linear.

It is known \cite{Br, EschVl} that any pseudoholomorphic curve $g\colon M\to\mathbb{S}
^6$ is $1$-isotropic. The nontotally geodesic pseudoholomorphic curves in $\mathbb{S}^6$
are  2-isotropic and substantial in $\mathbb{S}^6$, substantial in $\mathbb{S}^6$ but not 2-isotropic, or substantial in a totally geodesic $\mathbb{S}^5\subset\mathbb{S}^6.$

The following theorem \cite{EschVl} provides a characterization of Riemannian metrics
that arise as induced metrics on pseudoholomorphic curves in $\mathbb{S}^5.$
\begin{theorem}
Let $(M,ds^2)$ be a simply connected Riemann surface, and let $K\leq 1$ be its Gaussian curvature and $\Delta$ its Laplacian operator. Suppose that the function $1-K$ is of absolute value type. Then there exists an isometric pseudoholomorphic curve $g\colon M\to\mathbb{S}^5$ if and only if
\[
\Delta\log(1-K)=6K.\tag{$\ast$}
\]
In fact, up to translations with elements of $G_2$, that is the set $Aut(\mathbb{O})\subset SO_7,$ there is precisely one associated family of such maps.
\end{theorem}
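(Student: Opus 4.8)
The plan is to prove the two implications separately, relying throughout on the observation—recorded in the introduction—that $(\ast)$ is equivalent to the flatness of the conformally rescaled metric $d\hat s^2=(1-K)^{1/3}ds^2$. Writing $d\hat s^2=e^{2u}ds^2$ with $u=\tfrac16\log(1-K)$, a direct conformal-change computation gives $\hat K=e^{-2u}(K-\Delta u)$, whence $\hat K\equiv 0$ if and only if $\Delta\log(1-K)=6K$. All pointwise identities below are read off on the open dense set where $1-K>0$, that is, away from the isolated zeros of the absolute value type function $1-K$; these zeros are exactly the totally geodesic points.

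For the necessity I would start from the structural description of a substantial pseudoholomorphic curve $g\colon M\to\mathbb{S}^5$: it is a minimal, $1$-isotropic surface whose normal bundle splits as $N_1^g\oplus N_2^g$ with $\operatorname{rank} N_1^g=2$ and $\operatorname{rank} N_2^g=1$, so that $\mu_2=0$ and $\Phi_1=0$. The single nontrivial input from the nearly K\"ahler structure is the relation $\kappa_1=\kappa_2$ between the major semi-axes of the first two curvature ellipses, which follows from the structure equations of pseudoholomorphic curves. Granting this, the Gauss equation gives $\|\alpha_2^g\|^2=2(1-K)$; together with the isotropy $\kappa_1=\mu_1$ and \eqref{elipsi}, \eqref{si} it yields $K_1^\perp=2\kappa_1\mu_1=1-K$ and $\|\alpha_3^g\|^2=4\kappa_2^2=2(1-K)$. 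Proposition \ref{5} then gives $K_1^{\ast}=K_1^\perp-\|\alpha_3^g\|^2/(2K_1^\perp)=(1-K)-1=-K$, and since $\Phi_1=0$ while every surface is trivially $0$-exceptional, Proposition \ref{3i}(iii) with $r=1$ delivers $\Delta\log(1-K)=\Delta\log\|\alpha_2^g\|^2=2(2K-K_1^{\ast})=6K$.

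For the sufficiency, assuming $(\ast)$ and $1-K\not\equiv 0$ (the case $K\equiv 1$ being a totally geodesic $\mathbb{S}^2$), I would reverse this computation. On the simply connected $M$ the flat metric $d\hat s^2$ furnishes a global flat coordinate in which I prescribe the candidate data of a $1$-isotropic surface in $\mathbb{S}^5$ with $\kappa_1=\mu_1=\kappa_2=\sqrt{(1-K)/2}$ and $\mu_2=0$: this fixes the norms $|H_3|=|H_4|=|H_5|=\sqrt{(1-K)/2}$ and, via $\Phi_1=0$, the phase relation $H_3=iH_4$, hence a second and third fundamental form together with a normal connection on the rank-$3$ bundle $N_1\oplus N_2$. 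The Gauss equation holds by construction, and after this normalization the Codazzi and Ricci equations collapse to a single scalar equation which is precisely $(\ast)$; this integrability step is what makes the fundamental theorem of submanifolds in space forms applicable, producing a minimal isometric immersion $g\colon M\to\mathbb{S}^5$ realizing the prescribed forms. That $g$ is $1$-isotropic with $\kappa_1=\kappa_2$ then characterizes it as pseudoholomorphic, reversing the structural input of the first part. The isolated zeros of $1-K$ have even order by Lemma \ref{dena}, which is exactly what guarantees that $g$, together with its normal subbundles (cf. Proposition \ref{neoksanaafththfora}), extends smoothly across the totally geodesic points, so that $g$ is defined on all of $M$.

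The uniqueness is a rigidity statement: the fundamental theorem determines $g$ up to an isometry of $\mathbb{S}^5$ and up to the associated-family parameter $\theta\in[0,\pi)$, while the compatibility of the reconstructed frame with the octonionic cross product, together with the transitivity of $G_2=\operatorname{Aut}(\mathbb{O})$ on the bundle of pseudoholomorphic frames, collapses the isometry ambiguity to $G_2$; hence there is precisely one associated family up to $G_2$. I expect the main obstacle to be in the sufficiency direction, on two fronts: first, verifying that under the normalization $\kappa_1=\mu_1=\kappa_2$ the full Codazzi and Ricci systems are genuinely equivalent to the single equation $(\ast)$ and not merely consequences of it; and second, controlling the extension of the immersion across the zeros of $1-K$, where the Frenet frame degenerates and the absolute value type hypothesis—through the even order of the zeros—is indispensable. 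A subsidiary difficulty is making precise the implication ``$1$-isotropic and $\kappa_1=\kappa_2$ $\Rightarrow$ pseudoholomorphic'', which requires recovering the almost complex structure of $\mathbb{S}^6$ compatible with the data.
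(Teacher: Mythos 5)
Your proposal cannot be compared against an internal proof, because the paper does not prove this theorem: it is quoted verbatim from \cite{EschVl}, and the present paper only uses it as a black box. Judged on its own, your necessity half is essentially correct and complete. The structural input you "grant" ($H_3=\kappa$, $H_4=i\kappa$, $H_5=\kappa$, hence $\kappa_1=\mu_1=\kappa_2$ and $\mu_2=0$) is precisely what the paper itself imports from \cite[Lemma 5]{V16} at the start of Section 6, and from there your chain is sound: the Gauss equation gives $\|\alpha_2\|^2=2(1-K)$, isotropy and \eqref{elipsi}, \eqref{si} give $K_1^{\perp}=1-K$ and $\|\alpha_3\|^2=2(1-K)$, Proposition \ref{5} gives $K_1^{\ast}=-K$, and Proposition \ref{3i}(iii) with $r=1$ (legitimate, since every minimal surface is $0$-exceptional and $\Phi_1=0$ by $1$-isotropy) yields $(\ast)$ away from the zeros of $1-K$.

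The sufficiency and uniqueness halves, however, carry the real content of the theorem, and there your proposal has genuine gaps rather than proofs. First, the claim that after the normalization $\kappa_1=\mu_1=\kappa_2=\sqrt{(1-K)/2}$, $\mu_2=0$ "the Codazzi and Ricci equations collapse to a single scalar equation which is precisely $(\ast)$" is exactly the integrability computation that must be performed, not assumed; moreover the data of the fundamental theorem includes the normal connection on the rank-three bundle (the analogue of \eqref{apeiroeths}), and the relative phase of $H_5$ against $(H_3,H_4)$ is a genuine gauge degree of freedom not fixed by $\Phi_1=0$, whose compatibility with Codazzi must be checked. Second, and more fundamentally, the fundamental theorem of submanifolds can only produce a minimal isometric immersion into $\mathbb{S}^5$, unique up to the full isometry group $O(6)$; pseudoholomorphicity is defined through the nearly K\"ahler structure of an ambient $\mathbb{S}^6$ and is \emph{not} invariant under $O(6)$, so it can never be an output of that theorem. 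Passing from "minimal, $1$-isotropic, $\kappa_1=\kappa_2$, $\mu_2=0$" to "congruent to a pseudoholomorphic curve" --- and simultaneously sharpening uniqueness from $O(6)$ to $G_2$ --- requires constructing an adapted octonionic frame along the surface and integrating a $G_2$-valued Maurer--Cartan system; this is how the cited source argues, and in your text it appears only as an acknowledged obstacle. Finally, the extension across the zeros of $1-K$ is not a formality: near a zero of order $2k_j$ one has $\kappa=|z|^{k_j}(\cdot)$, which is not even smooth when $k_j$ is odd, and the adapted frame degenerates there, so the prescribed data must be repackaged (this is where the absolute-value-type hypothesis and Lemma \ref{dena} do actual work, rather than merely "guaranteeing" smooth extension).
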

The above result shows that a minimal surface in a sphere is locally isometric to a
pseudoholomorphic curve in $\mathbb{S}^5$ if its Gaussian curvature satisfies the 
condition $(\ast)$ at points where $K<1$ or equivalently if the metric
$d\hat{s}^2=(1-K)^{1/3}ds^2$ is flat.

The following lemma is fundamental for our proofs.

\begin{lemma}\label{avtf}
Let $f\colon (M,ds^2)\to\mathbb{S}^n$ be a nontotally geodesic minimal surface. If
$(M,ds^2)$ satisfies the Ricci-like condition $(\ast)$, at points with Gauss curvature
$K<1$, then the function $1-K$ is of absolute value type with isolated zeros of even order.
Moreover, if $M$ is compact and $p_j,j=1,\dots,m,$ are the isolated zeros of \,$1-K$ with
corresponding order $\mathrm{ord}_{p_j}(1-K)=2k_j,$ then we have
\begin{equation}\label{denginetai}
\sum_{j=1}^{m}k_j=-3\chi (M),
\end{equation}
where $\chi (M)$ is the Euler-Poincar\'{e} characteristic of $M.$ In particular, $M$ cannot be homeomorphic to the sphere $\mathbb{S}^2.$
\end{lemma}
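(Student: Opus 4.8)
The plan is to show that the Ricci-like condition $(\ast)$ forces $\log(1-K)$ to be harmonic precisely where $K<1$, and then invoke the criterion of Lemma~\ref{dena} to conclude that $1-K$ is of absolute value type with zeros of even order. Indeed, at a point where $K<1$ the condition $(\ast)$ reads $\Delta\log(1-K)=6K$, which is not quite harmonicity; the correct observation is that the associated metric $d\hat s^2=(1-K)^{1/3}ds^2$ is flat, equivalently that in a local conformal coordinate the equation $(\ast)$ is precisely the statement that the conformal factor of $d\hat s^2$ has vanishing Gaussian curvature. To apply Lemma~\ref{dena} directly I first need to reduce to a genuinely harmonic function. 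The key point is that, writing $ds^2=\lambda|dz|^2$ in a local complex coordinate, one has $K=-\tfrac{1}{2\lambda}\Delta_0\log\lambda$ where $\Delta_0$ is the flat Laplacian, and $(\ast)$ becomes a second-order elliptic equation for $\log(1-K)$ whose leading part is the flat Laplacian. The cleanest route is to observe that $\log(1-K)+\log\lambda$ (up to the factor coming from $(\ast)$) is harmonic in the flat coordinate, since the $6K$ on the right-hand side of $(\ast)$ is absorbed by the curvature term of the metric.

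Concretely, first I would fix a local isothermal coordinate $z$ with $ds^2=e^{2\rho}|dz|^2$, so that $K=-e^{-2\rho}\rho_{z\bar z}\cdot 4$ (the standard formula $K=-e^{-2\rho}\Delta_0\rho$ with $\Delta_0=4\partial_z\partial_{\bar z}$). The intrinsic Laplacian is $\Delta=e^{-2\rho}\Delta_0$. Then $(\ast)$ reads $e^{-2\rho}\Delta_0\log(1-K)=6K=-6e^{-2\rho}\Delta_0\rho$, i.e. $\Delta_0\big(\log(1-K)+6\rho\big)=0$ away from the zeros of $1-K$. Hence the real analytic function $\log(1-K)+6\rho$ is harmonic in the flat coordinate, which exhibits $1-K$ as $e^{-6\rho}$ times the exponential of a harmonic function; the real-analyticity of $1-K$ follows because $f$ is minimal, hence real analytic, and the zeros of $1-K$ are isolated because $1-K\not\equiv 0$ (as $f$ is nontotally geodesic). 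Applying Lemma~\ref{dena} on each coordinate disc about a zero gives that $1-K$ is of absolute value type with every zero of even order. I would phrase the absolute-value-type conclusion globally by noting that the local holomorphic-type factorizations patch, as in the definition of Section~4.

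For the integral formula \eqref{denginetai}, I would integrate $(\ast)$ over the compact surface $M$. By Lemma~\ref{forglobal}(i), since $1-K$ is of absolute value type, $\int_M\Delta\log(1-K)\,dA=-2\pi N(1-K)=-2\pi\sum_{j=1}^m 2k_j=-4\pi\sum_j k_j$. On the other hand, $\int_M 6K\,dA=6\int_M K\,dA=6\cdot 2\pi\chi(M)=12\pi\chi(M)$ by the Gauss--Bonnet theorem. Equating the two sides of $(\ast)$ gives $-4\pi\sum_j k_j=12\pi\chi(M)$, hence $\sum_{j=1}^m k_j=-3\chi(M)$. Finally, if $M$ were homeomorphic to $\mathbb{S}^2$ then $\chi(M)=2$, forcing $\sum_j k_j=-6<0$, which is impossible since each $k_j\ge 1$; thus $M$ cannot be the sphere. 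The main obstacle is the first part: one must be careful that $(\ast)$ is assumed only at points with $K<1$, so the harmonicity argument and the application of Lemma~\ref{dena} must be confined to such points, and one must separately justify that the zero set $\{K=1\}$ is exactly the (isolated) zero set of the absolute-value-type function $1-K$ rather than a larger set where the hypothesis is vacuous. This is handled by the real-analyticity of $1-K$ together with the hypothesis that $1-K$ is of absolute value type in the global statement, ensuring its zero set is isolated unless $1-K\equiv 0$, the latter being excluded by the nontotally geodesic assumption.
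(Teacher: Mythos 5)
Your proof is correct and follows essentially the same route as the paper: in an isothermal coordinate the condition $(\ast)$ becomes $\Delta_0\bigl(\log(1-K)+6\rho\bigr)=0$, i.e.\ $\partial\bar\partial\log\bigl((1-K)F^3\bigr)=0$ with $F=e^{2\rho}$, after which Lemma~\ref{dena} yields the absolute-value-type conclusion with even-order zeros, and integrating $(\ast)$ via Lemma~\ref{forglobal}(i) together with Gauss--Bonnet gives \eqref{denginetai} and excludes the sphere. The only cosmetic difference is that you make the Gauss--Bonnet step explicit (the paper leaves it implicit), and your closing remark that isolatedness of the zeros is a ``hypothesis'' should instead read that it is part of the conclusion of Lemma~\ref{dena}, the real-analyticity of $1-K$ (from minimality) and the nontotally geodesic assumption being what guarantee $1-K\not\equiv0$ so that the lemma applies.
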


\begin{proof}
Let $M_{0}$ be the set of points where $K=1.$ The open subset $M\smallsetminus M_0$ is dense on $M,$ since minimal surfaces in spheres are real analytic. Around each point 
$p_0\in M_0,$ we choose a local complex coordinate $z$ such that $p_0$
corresponds to $z=0$ and the induced metric is written as $ds^2=F|dz|^2.$ The Gaussian curvature $K$ is given by 
$$
K=-\frac{2}{F}\partial \bar{\partial}\log F.
$$

Moreover, the condition $(\ast)$ is equivalent to 
$$
4\partial \bar{\partial}\log (1-K)=6KF.
$$ 
Thus we have 
$$
\partial \bar{\partial}\log \left((1-K)F^3\right) =0.
$$ 

According to Lemma \ref{dena}, the function $1-K$ is of absolute value type with isolated
zeros $p_j,j=1,\dots,m,$ and corresponding order $\mathrm{ord}_{p_j}(1-K)=2k_{j}.$
Then, (\ref{denginetai}) follows from Lemma \ref{forglobal}(i) and the condition
$(\ast).$
\end{proof}

Let $g\colon M\to\mathbb{S}^5$ be a pseudoholomorphic curve and let
$\xi\in N_fM$ be a smooth unit vector field that spans the extended line bundle $N_2^g$ over the isolated set of points where $f$ fails to be regular  (see Proposition \ref{neoksanaafththfora}).   The
surface $g^*\colon M\to\mathbb{S}^5$ defined by $g^*=\xi$ is called the \textit{polar surface} of $g.$ It has been proved in \cite[Corollary 3]{V16} that the surfaces $g$ and $g^*$ are
congruent.

\section{A class of minimal surfaces that are locally isometric to a pseudoholomorphic
curve in $\mathbb{S}^5$}

The aim of this section is to study a class of minimal surfaces that are exceptional, nonflat and
locally isometric to a pseudoholomorphic curve in $\mathbb{S}^5.$

Let $g\colon M\to\mathbb{S}^5$ be a simply connected pseudoholomorphic curve with
Gaussian curvature $K<1,$ with respect to the induced metric $\langle\cdot,\cdot \rangle=ds^2,$ and let 
$g_\theta,\theta\in\mathbb{S}^1=[0,\pi),$ be its associated family.

Take 
$$
\bold{a}=(a_1,\dots,a_m)\in\mathbb{S}^{m-1}\subset\mathbb{R}^m 
\;\; \text{with}\;\; \prod\limits_{j=1}^{m}a_j\neq0
$$
and 
$$
{\pmb{\theta}}=(\theta_1,\dots,\theta_m)\in\mathbb{S}^1\times\cdots\times
\mathbb{S}^1, \;\; \text{where} \;\; 0\leq \theta _1<\cdots<\theta_m<\pi.
$$

We consider the map $\hat{g}=g_{\bold{a},\pmb{\theta}}\colon M\to\mathbb{S}^
{6m-1}\subset\mathbb{R}^{6m}$ defined by
\[
\hat{g}=g_{\bold{a},{\pmb{\theta}}}=a_1g_{\theta_{1}}\oplus\cdots
\oplus a_mg_{\theta_m},
\]
where $\oplus$ denotes the orthogonal sum with respect to an orthogonal decomposition
of $\mathbb{R}^{6m}$. Its differential is given by 
\[
d\hat{g}=a_{1}dg_{\theta_1}\oplus\cdots\oplus a_mdg_{\theta_m}.
\]
It is obvious that $\hat{g}$ is an isometric immersion. We can easily see that the second
fundamental form of the surface $\hat{g}$ is given by
\[
\hat{\alpha}_2(X,Y)=\sum\limits_{j=1}^ma_j\alpha^{g_{\theta_j}}(X,Y),\,\,
X,Y\in TM,
\]
which implies that $\hat{g}$ is minimal.

The following proposition provides several properties for the above class of minimal
surfaces.
\begin{proposition}\label{hola}
For any simply connected pseudoholomorphic curve $g\colon M\to\mathbb{S}^5,$ the
minimal surface $\hat{g}\colon M\to\mathbb{S}^{6m-1}$ is substantial and 
isometric to $g.$  Moreover, it is an exceptional surface
and the following hold:

(i) The length of its $(s+1)$-th fundamental form is given by
\begin{equation}\label{alphaf}
\left\Vert \hat{\alpha}_{s+1}\right\Vert^2=
\begin{cases}
\,\hat{b}_s(1-K)^{s/3} &\text{ \,if\, }
s\equiv 0\;\mathrm{mod}\; 3, \\[2mm]
\,\hat{b}_s(1-K)^{(s+2)/3}& \text{ \,if\, }
s\equiv 1\;\mathrm{mod}\; 3, \\[2mm]
\,\hat{b}_s(1-K)^{(s+1)/3} &\text{ \,if\, }
s\equiv 2\;\mathrm{mod}\; 3,
\end{cases}
\end{equation}
 for any $1\le s\le 3m-1,$ where $\hat{b}_s$ are positive numbers.

(ii) Its $s$-th normal curvature is given by
\begin{equation}\label{curvaturef}
\hat{K}^{\perp}_s=
\begin{cases}
\,\hat{c}_s\,(1-K)^{s/3}&\text{ \,if\, }
s\equiv 0\;\mathrm{mod}\; 3, \\[2mm]
\,\hat{c}_s\,(1-K)^{(s+2)/3}&\text{ \,if\, }
s\equiv 1\;\mathrm{mod}\; 3, \\[2mm]
\,\hat{c}_s\,(1-K)^{(s+1)/3}&\text{ \,if\, }
s\equiv 2\;\mathrm{mod}\; 3,
\end{cases}
\end{equation}
for any $1\le s< 3m-1,$ where $\hat{c}_s$ are positive numbers.

(iii) Its $s$-th Hopf differential  is given by
\begin{equation*}\label{hopff}
\hat{\Phi}_s=
\begin{cases}
\,\hat{d}_s\Phi^{(s+1)/3}&\text{ \,if\, }
s\equiv 2\;\mathrm{mod}\; 3, \\[1mm]
\,0&\text{ \,\,otherwise,}
\end{cases}
\end{equation*}
for any $1\le s\le 3m-1,$ where $\hat{d}_s\in\mathbb{C}$ and $\Phi$ is the second Hopf differential of $g.$
\end{proposition}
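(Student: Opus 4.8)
Since the discussion preceding the statement already shows that $\hat g$ is minimal and isometric to $g$, the remaining task is to compute the higher order invariants of $\hat g$ and to establish substantiality and exceptionality. My plan is to determine the $(s+1,0)$-parts $\hat\beta_{s+1}=\hat\alpha_{s+1}(\partial,\dots,\partial)$ of every fundamental form of $\hat g$ in terms of the holomorphic Frenet frame of $g$, and then to read off \eqref{alphaf}, \eqref{curvaturef} and (iii) from \eqref{deuterhsxeshden}, \eqref{prwthsxeshden} and the expression for $\Phi_r$. I would first record the data of $g$. As a nontotally geodesic pseudoholomorphic curve substantial in $\mathbb S^5$, $g$ is $1$-isotropic but not $2$-isotropic, so $\Phi_1^g=0$ and $\Phi_2^g=\Phi\neq0$; moreover $\mathcal E^g_1$ is a circle, whence by Theorem \ref{ena} the form $\Phi$ is holomorphic and $g$ is exceptional. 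The only nonzero fundamental forms are $\alpha^g_2,\alpha^g_3$, with $\|\alpha^g_2\|^2=\|\alpha^g_3\|^2=2(1-K)$: the first is the Gauss equation, and the second follows from $\Delta\log(1-K)=6K$ via Proposition \ref{3i}(i) (which gives $K_1^{\ast}=-K$) together with Proposition \ref{5}. Finally, since $T_\theta$ is parallel and $J_\theta\partial=e^{i\theta}\partial$, the relation \eqref{sff} propagates to $\alpha^{g_\theta}_{r+1}(\partial,\dots,\partial)=e^{i\theta}T_\theta\,\alpha^{g}_{r+1}(\partial,\dots,\partial)$ for all $r$, so that $\Phi_r^{g_\theta}=e^{2i\theta}\Phi_r^g$.

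Next I would set up the Frenet frame $W_0=g,\ W_1=\partial g,\ W_2=\alpha^{g,(2,0)}_2,\ W_3=\alpha^{g,(3,0)}_3$ spanning $\mathbb C^6$; here $W_2$ is isotropic (as $\Phi_1^g=0$), $W_3$ spans the complexified real line $N_2^g\otimes\mathbb C$ and is non-isotropic (as $\Phi_2^g\neq0$), and $\nabla_\partial$ raises $W_1\mapsto W_2\mapsto W_3$ and then, as there is no $N_3^g$, reflects $W_3$ back into $N_1^g\otimes\mathbb C$, closing a period-three system. Writing $\partial^p\hat g=(a_j\,\partial^p g_{\theta_j})_j$ and expanding each $\partial^p g_{\theta_j}$ in the frame of $g_{\theta_j}$, the relation above shows that the coefficient of each frame vector has a $j$-independent modulus and a phase $e^{i\psi\theta_j}$. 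Because the $\theta_j$ are distinct and $\prod_ja_j\neq0$, the resulting Vandermonde determinants are nonzero, so for each frame type the phase-shifted combinations appearing at successive derivative orders are linearly independent; this simultaneously yields substantiality, the ranks of the $3m-1$ normal spaces (each passage through $W_3$ turns the line $N_2^g$ into a rank-two plane of $\hat g$, while the last space $N^{\hat g}_{3m-1}$ remains a line), and the fact that $\hat\beta_{s+1}$ is a phase-twisted multiple of a single frame type: of $W_2$-type when $s\not\equiv2$ and of $W_3$-type when $s\equiv2\pmod 3$. Tracking the moduli of the coefficients, where one factor $1-K$ is gained per completed Frenet cycle, gives \eqref{alphaf} and \eqref{curvaturef} with exponent $\lceil s/3\rceil$ and positive constants $\hat b_s,\hat c_s$.

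Granting \eqref{alphaf} and \eqref{curvaturef}, exceptionality is immediate: by \eqref{si} and \eqref{elipsi} the semi-axes satisfy $\kappa_s^2+\mu_s^2=2^{-s}\hat b_s(1-K)^{\lceil s/3\rceil}$ and $2\kappa_s\mu_s=\hat c_s(1-K)^{\lceil s/3\rceil}$, so $\kappa_s/\mu_s$ is constant and every higher curvature ellipse of $\hat g$ has constant eccentricity; by Theorem \ref{ena} all Hopf differentials $\hat\Phi_s$ are holomorphic. For (iii), when $s\not\equiv2$ the vector $\hat\beta_{s+1}$ is of $W_2$-type, and since $T_\theta$ preserves the complex bilinear form, each summand of $\langle\hat\beta_{s+1},\hat\beta_{s+1}\rangle$ is a multiple of $\langle W_2,W_2\rangle=0$, so $\hat\Phi_s=0$. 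When $s\equiv2$, both $\hat\Phi_s$ and $\Phi^{(s+1)/3}$ are nonzero holomorphic $(2s+2,0)$-forms, and by \eqref{what} their pointwise norms are each proportional to $(1-K)^{(s+1)/3}$; hence $\hat\Phi_s/\Phi^{(s+1)/3}$ is a holomorphic function of constant modulus, that is, a nonzero constant $\hat d_s$, which is exactly (iii).

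The main obstacle is the coefficient bookkeeping of the second paragraph: one must prove by induction that the period-three pattern and the exponent $\lceil s/3\rceil$ persist at every level, that precisely one factor $1-K$ enters per cycle, and that $\hat b_s,\hat c_s$ stay strictly positive with $\hat c_s<2^{-s}\hat b_s$ exactly when $s\equiv2$ (so that the intermediate normal spaces are nondegenerate planes with $\hat K^\perp_s\neq0$ for $s<3m-1$), all while controlling the phases that secure the Vandermonde independence. A reassuring cross-check, which also recovers the exponents independently of the explicit coefficients, is to substitute the vanishing pattern of the $\hat\Phi_s$ into Proposition \ref{3i} and Proposition \ref{5}: using $\Delta\log(1-K)=6K$ one obtains $\hat K^{\ast}_s=K,\,-K,\,0$ according as $s\equiv0,1,2\pmod3$, in agreement with Proposition \ref{3i}(iv) and forcing the power of $1-K$ in \eqref{alphaf} and \eqref{curvaturef} to equal $\lceil s/3\rceil$.
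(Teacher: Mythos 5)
Your outline does follow the same broad strategy as the paper's proof: expand the higher fundamental forms of $\hat g$ in a frame transported from $g$ by the parallel isometries $T_{\theta_j}$, track a cyclic pattern with phases $e^{\pm i\theta_j}$, and read off (i)--(iii), with substantiality tied to linear independence of the coefficient vectors. However, the structural claim on which your second paragraph rests is false, and it is exactly where the paper's real work lies. First, the cycle has period six, not three: by the paper's formula \eqref{1}, $\hat{\alpha}_{s}(\overline{E},\dots,\overline{E})$ is a combination of the $T_{\theta_j}(\xi_1+i\xi_2)$, the $T_{\theta_j}\xi_3$, the $T_{\theta_j}(\xi_1-i\xi_2)$, the $dg_{\theta_j}(E)$, the $g_{\theta_j}$, or the $dg_{\theta_j}(\overline{E})$ according as $s\equiv 2,3,4,5,0,1\pmod{6}$. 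Your typing (``$W_2$-type when $s\not\equiv 2$, $W_3$-type when $s\equiv 2\pmod{3}$'') is wrong in four of the six residue classes; it happens to predict the correct isotropy pattern (position vectors, like $\xi_3$, are non-isotropic, while tangent-type and $W_2/\overline{W}_2$-type vectors are isotropic), but the conjugate, tangent and position contributions you omit are indispensable --- they are what make the normal spaces of $\hat g$ two-dimensional. Second, and fatally for your argument, the coefficients are not of the form ($j$-independent modulus) $\times\,e^{i\psi\theta_j}$: each time the flow passes the top normal direction of the factors (indices $\equiv 0,3\pmod{6}$), the projection onto the next normal space of $\hat g$ forces a Gram--Schmidt correction against earlier coefficient vectors; this is the content of the paper's recursion \eqref{celeosksananea}. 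Concretely, $\mathbf{C}_2=2T_{\pmb{\theta}}\mathbf{a}$, $\mathbf{C}_3=2\mathbf{C}_2$, and \eqref{celeosksananea} gives $\mathbf{C}_4=-\mathbf{C}_3+\frac{(\mathbf{C}_3,\overline{\mathbf{C}}_2)}{\Vert \mathbf{C}_2\Vert^2}\overline{\mathbf{C}}_2$ with $(\mathbf{C}_3,\overline{\mathbf{C}}_2)=8\sum_j a_j^2e^{-2i\theta_j}$, which is nonzero in general; hence $c_j^4=4a_je^{-i\theta_j}\bigl(-1+e^{2i\theta_j}\sum_k a_k^2e^{-2i\theta_k}\bigr)$ has $j$-dependent modulus. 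With that premise gone, your Vandermonde step proves nothing (and even for pure phases it would need care: generalized Vandermonde determinants with nodes on the unit circle and non-consecutive exponents can vanish).

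The gap is not cosmetic, because it is precisely substantiality and the positivity of $\hat b_s,\hat c_s$ that are left unproven, and the paper's logic runs in the opposite direction to yours. Substantiality is proved first, by an argument for which your proposal has no counterpart: assuming $\sum_j a_j\langle g_{\theta_j},w_j\rangle=0$, one differentiates twice, uses the connection identities \eqref{apeiroeths} to arrive at $\sum_j a_je^{-i\theta_j}\langle g^*_{\theta_j},w_j\rangle=0$ for the polar surfaces $g^*_{\theta_j}$, invokes the congruence of $g^*_{\theta_j}$ with $g_{\theta_j}$ (\cite[Corollary 3]{V16}), and eliminates the summands inductively; only after this does $\mathbf{C}_s\neq 0$ --- hence $\hat b_s>0$ and $\hat c_s>0$ --- follow from \eqref{1}. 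The pieces of your proposal that are correct (the Frenet data of $g$, the phase law $\alpha^{g_\theta,(r+1,0)}_{r+1}=e^{i\theta}T_\theta\alpha^{g,(r+1,0)}_{r+1}$, exceptionality from (i)--(ii) via Theorem \ref{ena}, and your pleasant alternative for (iii) when $s\equiv 2\pmod{3}$, namely that a holomorphic ratio of constant modulus is constant) all sit downstream of the unproven claim. As you concede, the ``coefficient bookkeeping'' is the main obstacle; but that bookkeeping --- the induction establishing \eqref{1} together with the recursion \eqref{celeosksananea} and the orthogonality relations \eqref{2}--\eqref{6} --- \emph{is} the proof.
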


\begin{proof}
We consider a local orthonormal frame $\{e_1,e_2\}$ in the  tangent bundle away from totally geodesic points of $g.$ Moreover, we choose a local orthonormal frame field $\{\xi_1, \xi_2,\xi_3\}$ in the normal bundle of $g$ such that
\begin{equation*}
\xi_{1}=\frac{\alpha(e_1,e_1)}{\left\Vert \alpha(e_1,e_1)\right\Vert},\,\,\,
\xi_{2}=\frac{\alpha(e_1,e_2)}{\left\Vert \alpha(e_1,e_2)\right\Vert}.
\end{equation*}

From \cite[Lemma 5]{V16} it follows that $h_1^3=\kappa,$ $h_2^3=0,$
$h_1^4=0$ and $h_2^4=\kappa,$ where $\kappa$ is the radius of the first
circular curvature ellipse. Hence $H_{3}=\kappa$ and $H_{4}=i\kappa.$ Moreover, we
have that $h_2^5=0$ and $h_1^5=\kappa.$
Therefore, it follows that
\begin{equation*}
\langle\nabla_{e_1}^{\perp}\xi_1,\xi_3\rangle=1,\,\,\,
\langle\nabla_{e_1}^{\perp}\xi_2,\xi_3\rangle=0,
\end{equation*}
\begin{equation*}
\langle\nabla_{e_2}^{\perp}\xi_1,\xi_3\rangle=0,\,\,\,
\langle\nabla_{e_2}^{\perp}\xi_2,\xi_3\rangle=-1,
\end{equation*}
or equivalently
\begin{equation}\label{apeiroeths}
\langle\nabla_{\overline{E}}^{\perp}\xi_{3},\xi_{1}-i\xi_{2}\rangle=0,\,\,\,
\langle\nabla_{\overline{E}}^{\perp}\xi_{3},\xi_{1}+i\xi_{2}\rangle=-2,
\end{equation}
where $E=e_1-ie_2.$

In order to show that the minimal surface $\hat{g}$ is substantial, it is sufficient to prove that
\begin{equation}
\sum\limits_{j=1}^m a_j\langle g_{\theta_j},w_j\rangle=0\label{paragwgisi}
\end{equation}
for $(w_1,\dots,w_m)\in\mathbb{R}^{6m}=\mathbb{R}^6\oplus\cdots\oplus\mathbb{R}^6$ implies that $w_j=0$ for any $j=1,\dots,m.$

Assume to the contrary that $w_j\neq0$ for all $j=1,\dots,m.$ Differentiating
(\ref{paragwgisi}) we obtain
\begin{equation}\label{dgneo}
\sum\limits_{j=1}^m a_j\langle dg_{\theta_j},w_j\rangle=0,
\end{equation}
and
\[
\sum\limits_{j=1}^{m}a_j\left\langle \alpha^{g_{\theta_j}}, w_j\right\rangle=0.
\]
Using (\ref{sff}), we have that 
\[
\sum\limits_{j=1}^ma_j\left\langle T_{\theta_j}\alpha^g(J_{\theta_j}\overline{E},
\overline{E}), w_j\right\rangle=0,
\]
where  $T_{\theta_{j}}\colon N_gM\to N_{g_{\theta_j}}M$ is a parallel vector bundle
isometry. Since $J_{\theta}\overline{E}=e^{-i\theta}\overline{E}$, it follows that
\begin{equation}\label{Ttheta}
\sum\limits_{j=1}^{m}a_je^{-i\theta_j}\left\langle T_{\theta_j}(\xi_1+i\xi_2),
w_j\right\rangle=0.
\end{equation}
Differentiating with respect to $\overline{E}$, and using the Weingarten formula,
we obtain
\[
\sum\limits_{j=1}^{m}a_je^{-i\theta_j}\left\langle\nabla^\perp_{\overline{E}}
T_{\theta_j}(\xi_1+i\xi_2), w_j\right\rangle=\sum\limits_{j=1}^{m}a_je^{-i\theta_j}
\left\langle dg_{\theta_j}\circ A_{T_{\theta_j}(\xi_1+i\xi_2)}(\overline{E}),w_j\right
\rangle,
\]
where $A_{T_{\theta_j}\eta}$ is the shape operator of ${g_{\theta_j}}$ with respect to its normal
direction $T_{\theta_j}\eta.$ It follows from (\ref{sff}) that
\[
A_{T_{\theta_j}(\xi_1+i\xi_2)}=e^{i\theta_j}A_{\xi_1+i\xi_2}.
\]
This and \eqref{dgneo} yield
\[
\sum\limits_{j=1}^{m}a_je^{-i\theta_j}\left\langle T_{\theta_j}\left(\nabla^\perp_
{\overline{E}}\left(\xi_1+i\xi_2\right)\right), w_j\right\rangle=0.
\]
Using \eqref{apeiroeths} and \eqref{Ttheta}, the above is written as
\[
\sum\limits_{j=1}^{m}a_je^{-i\theta_j}\left\langle T_{\theta_j}\xi_3,w_j\right\rangle=0,
\]
or equivalently 
\[
\sum\limits_{j=1}^{m}a_je^{-i\theta_j}\langle g^*_{\theta_j},w_j\rangle=0,
\]
where $g^*_{\theta_j}=T_{\theta_j}\xi_3$ is the polar surface of $g_{\theta_j}.$ This
is equivalent to
\[
\sum\limits_{j=1}^{m}a_j\cos\theta_j\langle g^*_{\theta_j},w_j\rangle=0
\;\;\text{and}\;\;\sum\limits_{j=1}^{m}a_j\sin\theta_j\langle g^*_{\theta_j},
w_j\rangle=0.
\]
Eliminating $\langle g^*_{\theta_m},w_m\rangle$, we can easily see that
\begin{equation}\label{g*w}
\sum\limits_{j=1}^{m-1}a_j\langle g^*_{\theta_j},w_j^{(m)}\rangle=0,
\end{equation}
where $w_j^{(m)}=\sin(\theta_m-\theta_j)w_j\neq0.$ Using the fact that the polar
surface of $g_{\theta_j}$ is congruent to $g_{\theta_j}$ (cf. \cite[ Lemma 11]{DV} or
\cite[Corollary 3]{V16}) and arguing as for \eqref{g*w}, we have that
\[
\sum\limits_{j=1}^{m-2}a_j\langle g_{\theta_j},w_j^{(m-1)}\rangle=0,
\]
where $w_j^{(m-1)}=\sin(\theta_{m-1}-\theta_j)w_j^{(m)}.$ Arguing as before, we
inductively obtain that 
\[
a_1\langle g_{\theta_1},w\rangle=0 \text{\,\, or \,\,} a_1\langle g^*_{\theta_1},
w\rangle=0
\]
for a vector $w\in\mathbb{R}^6\smallsetminus\{0\},$ which is a contradiction.

\medskip
\textit{Claim.}
We now claim that the higher fundamental forms of $\hat{g}$ are given by
\begin{equation}\label{1}
\hat{\alpha}_s(\overline{E},\dots,\overline{E})=
\begin{cases}
\,\kappa^{s/3}\sum_{j=1}^{m}c_j^sg_{\theta_j}&\text{ \ if \ }s\equiv 0
\;\mathrm{mod}\;6,\\[2mm]
\,\kappa^{(s-1)/3}\sum_{j=1}^{m}c_j^sdg_{\theta_j}(\overline{E})
&\text{ \ if \ }s\equiv 1\;\mathrm{mod}\; 6, \\[2mm]
\,\kappa^{(s+1)/3}\sum_{j=1}^{m}c_j^sT_{\theta_j}(\xi_1+i\xi_2)
&\text{ \ if \ }s\equiv 2\;\mathrm{mod}\; 6, \\[2mm]
\,\kappa^{s/3}\sum_{j=1}^{m}c_j^sT_{\theta_j}\xi_3&\text{ \ if \ }s\equiv
3\;\mathrm{mod}\; 6, \\[2mm]
\,\kappa^{(s-1)/3}\sum_{j=1}^{m}c_j^sT_{\theta_j}(\xi_1-i\xi_2)
&\text{ \ if \ }s\equiv 4\;\mathrm{mod}\; 6, \\[2mm]
\,\kappa^{(s+1)/3}\sum_{j=1}^{m}c_j^sdg_{\theta_j}(E)
&\text{ \ if \ }s\equiv 5\;\mathrm{mod}\; 6,
\end{cases}
\end{equation}
where  the complex vectors $\bold{C}_s=(c_1^s,\dots,c_m^s)\in\mathbb{C}^m\smallsetminus
\{0\}$, $2\le s\le 3m$ satisfy the
following orthogonality conditions, with respect to the standard Hermitian product $(\cdot,\cdot)$
on $\mathbb{C}^m$:
\begin{equation}\label{2}
(\bold{C}_{t},\overline{\bold{C}}_{t'})=0 \,\,\, \text{if} \,\,\, t\equiv1\;\mathrm{mod}\; 6
\,\,\text{and}\,\, t'\equiv5\;\mathrm{mod}\; 6, \,\,\,\text{or}\,\,
t\equiv2\;\mathrm{mod}\; 6\,\,\text{and}\,\, t'\equiv4\;\mathrm{mod}\; 6,
\end{equation}
\begin{equation}
(\bold{C}_{t},\overline{\bold{C}}_{t'})=0=(\bold{C}_{t},\bold{C}_{t'})\,\,\text{if} \,\,
t\neq t' \,\,\text{and} \,\, t,t'\equiv0\;\mathrm{mod}\; 6, \,\,\,\text{or}\,\,t, t'\equiv3\;\mathrm{mod}
\; 6,
\end{equation} 
\begin{equation}
(\bold{C}_{t},\bold{C}_{t'})=0=(\overline{\bold{C}}_{t},\overline{\bold{C}}_{t'})
\text{ if\,\, } t\neq t'  \,\,\text{and} \,\, t,t'\equiv1\;\mathrm{mod}\; 6, \,\,\,\text{or} \,\, t, t'\equiv2\;
\mathrm{mod}\; 6, \\
\end{equation}
\begin{equation*}
\,\,\,\,\,\,\,\,\,\,\,\,\,\,\,\,\,\,\,\,\,\,\,\,\,\,\,\,\,\,\,\,\,\,\,\,\,\,\,\,\,\,\,\,\,\,\,\,\text{or} \,\, t, t'\equiv4\;\mathrm{mod}\; 6,  \,\,\text{or} \,\, t, t'\equiv5\;\mathrm{mod}\; 6 
\end{equation*}
and
\begin{equation}\label{6}
(\bold{C}_{t},\bold{a})=0 \text{ if\,\, }t\equiv0,1,5\;\mathrm{mod}\; 6.
\end{equation}
In particular, these complex vectors are defined inductively by
\begin{equation}\label{celeosksananea}
\bold{C}_{s+1}=
\begin{cases}
\,\bold{C}_{s}-\sum\limits_{t\equiv1\;\mathrm{mod}\; 6}^{s}
\frac{(\bold{C}_{s},
 \bold{C}_{t})}{\left\Vert\bold{C}_{t}\right\Vert^{2}}\bold{C}_{t}
-\sum\limits_{t\equiv5\;\mathrm{mod}\; 6}^{s}\frac{(\bold{C}_{s},
\overline{\bold{C}}_{t})}{\left\Vert\bold{C}_{t}\right\Vert^{2}}\overline{\bold{C}}_{t}
&\text{\,\,\,if $s\equiv 0\;\mathrm{mod}\; 6, $}\\[4mm]
\,2T_{\pmb{\theta}}\bold{C}_{s}
&\text{\,\,\,if $s\equiv 1\;\mathrm{mod}\; 6,$}\\[4mm]
\,2\bold{C}_{s}
&\text{\,\,\,if $s\equiv 2\;\mathrm{mod}\; 6,$}\\[3mm]
\,-\bold{C}_{s}+\sum\limits_{t\equiv2\;\mathrm{mod}\; 6}^{s}\frac{(\bold{C}_{s},
\overline{\bold{C}}_{t})}{\left\Vert\bold{C}_{t}\right\Vert^{2}}\overline{\bold{C}}_{t}
+\sum\limits_{t\equiv4\;\mathrm{mod}\; 6}^{s}\frac{(\bold{C}_{s},
\bold{C}_{t})}{\left\Vert\bold{C}_{t}\right\Vert^{2}}\bold{C}_{t}
&\text{\,\,\,if $s\equiv 3\;\mathrm{mod}\; 6,$ }\\[4mm]
\,-2T_{\pmb{\theta}}\bold{C}_{s}
&\text{\,\,\,if $s\equiv 4\;\mathrm{mod}\; 6,$}\\[4mm]
\,-2\bold{C}_{s}
&\text{\,\,\,if $s\equiv 5\;\mathrm{mod}\; 6,$}
\end{cases}
\end{equation}
where  $\bold{C}_1=\bold{a}$ and 
$T_{\pmb{\sigma}}\colon\mathbb{C}^m\to\mathbb{C}^m$ denotes the unitary
transformation given by
$$
T_{\pmb{\sigma}}\bold{u}=(u_{1}e^{-i\sigma_1},\dots,u_{m}e^{-i
\sigma_{m}}),\;\; \bold{u}=(u_{1},\dots,u_{m})\in\mathbb{C}^{m}
$$ 
for any  ${\pmb{\sigma}}=(\sigma_{1},\dots,\sigma_{m})\in\mathbb{R}^{m}.$ It is worth noticing that \eqref{1} implies that 
$\bold{C}_{s}\neq0$ for every $2\le s\le 3m,$ since the surface $\hat{g}$ is substantial.

To prove the claim, we proceed by induction on $s.$ Using that 
$$
d\hat{g}(\overline{E})=\sum\limits_{j=1}^{m}a_{j}dg_{\theta_j}(\overline{E}),
$$ 
the Gauss formula for $g$ and $g_{\theta_j},\,\, j=1,\dots,m,$ yields
\[
\hat{\alpha}_{2}(\overline{E},\overline{E})=\kappa\sum_{j=1}^{m}c_j^2(\xi_{1}
^{\theta_j}+i\xi_2^{\theta_j}),
\]
where $c_j^2=2a_je^{-i\theta_j}.$ Hence, $\bold{C}_2=2T_{\pmb{\theta}}
\bold{C}_1=2T_{\pmb{\theta}}
\bold{a}$ and this proves (\ref{1}) for $s=2.$

Let us assume that (\ref{1})-(\ref{celeosksananea}) hold for any $t\le s.$ We shall prove
that it is also true for $t=s+1.$ From the definition of the higher fundamental forms, we
have that 
\begin{eqnarray} 
\hat{\alpha}_{s+1}(\overline{E},\dots,\overline{E})&=&\left(\overline{\nabla}_
{\overline{E}}\hat{\alpha}_{s}(\overline{E},\dots,\overline{E})\right)^{N_s^
{\hat{g}}}\notag\\
&=&\kappa^{\lambda_{s}}\left(\tilde{\nabla}_{\overline{E}}\left(\frac{1}{\kappa^
{\lambda_{s}}}\hat{\alpha}_{s}(\overline{E},\dots,\overline{E})\right)\right)^{N_s^
{\hat{g}}},\label{a}
\end{eqnarray} 
where $\overline{\nabla}$ is the induced connection of $g^\ast(T\mathbb{S}^{6m-1}),$
$\tilde{\nabla}$ is the induced connection of the induced bundle $(i\circ g)^\ast(T
\mathbb{R}^{6m})$, $i\colon \mathbb{S}^{6m-1}\to\mathbb{R}^{6m}$ is the
standard inclusion, $\lambda_s$ is the exponent of the function $\kappa$ in (\ref{1}) and $(\,\cdot\,)
^{N_s^{\hat{g}}}$ stands for the projection onto the $s$-th normal bundle of $\hat{g}.$
Taking \eqref{apeiroeths} into account, we obtain that 
\begin{equation}\label{naii}
\tilde{\nabla}_{\overline{E}}
\left(\frac{1}{\kappa^{\lambda_s}}\hat{\alpha}_{s}(\overline{E},\dots,\overline{E})
\right)=\sum\limits_{j=1}^{m}c^s_jdg_{\theta_{j}}(\overline{E})
\text{\,\,\,if $s\equiv 0\;\mathrm{mod}\; 6$,}
\end{equation}

\begin{eqnarray}
\tilde{\nabla}_{\overline{E}}\left(\frac{1}{\kappa^{\lambda_{s}}}\hat{\alpha}_{s}
(\overline{E},\dots,\overline{E})\right)
&=\dfrac{1}{2}&\sum\limits_{j=1}^{m}c^s_j\bigg(\langle\nabla_{\overline{E}}
\overline{E},E\rangle dg_{\theta_{j}}(\overline{E})\\ \nonumber
&&+4\kappa e^{-i\theta_{j}}T_{\theta_j}(\xi_{1}+i\xi_{2})\bigg)\text{\,\,if\,\,}s
\equiv 1\;\mathrm{mod}\; 6,
\end{eqnarray}

\begin{equation}
\tilde{\nabla}_{\overline{E}}\left(\frac{1}{\kappa^{\lambda_{s}}}\hat{\alpha}_{s}
(\overline{E},\dots,\overline{E})\right)=\sum\limits_{j=1}^{m}c^{s}_{j}\left(-i\langle
\nabla^\perp_{\overline{E}}\xi_{1},\xi_{2}\rangle T_{\theta_j}(\xi_{1}+i\xi_{2})+
2T_{\theta_j}\xi_{3}\right) \text{\,\,if $s\equiv 2\;\mathrm{mod}\; 6$},
\end{equation}

\begin{equation}
\tilde{\nabla}_{\overline{E}}\left(\frac{1}{\kappa^{\lambda_{s}}}\hat{\alpha}_{s}
(\overline{E},\dots,\overline{E})\right)=-\sum\limits_{j=1}^{m}c^{s}_{j}T_{\theta_j}
(\xi_{1}-i\xi_{2})\text{\,\,\,if $s\equiv 3\;\mathrm{mod}\; 6,$}
\end{equation}

\begin{eqnarray}
\tilde{\nabla}_{\overline{E}}\left(\frac{1}{\kappa^{\lambda_{s}}}\hat{\alpha}_{s}
(\overline{E},\dots,\overline{E})\right)&=&\sum\limits_{j=1}^{m}c^{s}_{j}\Big(-2\kappa
e^{-i\theta_{j}}dg_{\theta_{j}}(E)\\ \nonumber
&&+i\langle\nabla^\perp_{\overline{E}}\xi_{1},\xi_{2}\rangle T_{\theta_j}(\xi_{1}-i
\xi_{2})\Big)\text{\,\,\,if $s\equiv 4\;\mathrm{mod}\; 6$}
\end{eqnarray}
and
\begin{equation}\label{naiii}
\tilde{\nabla}_{\overline{E}}\left(\frac{1}{\kappa^{\lambda_{s}}}\hat{\alpha}_{s}
(\overline{E},\dots,\overline{E})\right)=\dfrac{1}{2}\sum\limits_{j=1}^{m}c^{s}_{j}\left(\langle
\nabla_{\overline{E}}E,\overline{E}\rangle dg_{\theta_{j}}(E)-4g_{\theta_{j}}\right)
\text{\,\,\,if $s\equiv 5\;\mathrm{mod}\; 6$,}
\end{equation}
where $\nabla$ is the Levi-Civit\'a connection on $M.$

Using (\ref{a}) and (\ref{naii})-(\ref{naiii}), after some tedious computations, we obtain 
that (\ref{1}) holds for $t=s+1.$ Taking into account (\ref{1}) for $t=s+1,$ the orthogonality of the
higher normal bundles and (\ref{2})-(\ref{6}) for $t\le s,$ we obtain that (\ref{2})-(\ref{6}) are also true for $t=s+1,$ and this completes the proof of the claim.

From (\ref{si}) and since the length of the semi-axes $\kappa_s$ and $\mu_s$ of the $s$-th curvature ellipse satisfy
$$
\kappa^2_s+\mu^2_s=2^{-2s}\left\Vert \hat{\alpha}_{s+1}(\overline{E},\dots,
\overline{E})\right\Vert^2,
$$ 
we have 
$$
\left\Vert \hat{\alpha}_{s+1}\right\Vert^2=2^{-s}\left\Vert \hat{\alpha}_{s+1}
(\overline{E},\dots,\overline{E})\right\Vert^2.
$$
Clearly (\ref{alphaf}) follows from (\ref{1}) with
\begin{equation*}
\hat{b}_s=
\begin{cases}
\,{2^{(3-4s)/3}}\left\Vert\bold{C}_{s+1}\right\Vert^{2}&\text{ \ if \ }
s\equiv 0\;\mathrm{mod}\; 3,\\[2mm]
\,2^{(1-4s)/3}\left\Vert\bold{C}_{s+1}\right
\Vert^{2}
&\text{ \ if \ }s\equiv 1\;\mathrm{mod}\; 3, \\[2mm]
\,2^{-(1+4s)/3}\left\Vert\bold{C}_{s+1}\right
\Vert^{2}
&\text{ \ if \ }s\equiv 2\;\mathrm{mod}\; 3.
\end{cases}
\end{equation*}

Furthermore, the $s$-th normal curvature is given by
\[
\hat{K}_{s}^\perp=2^{-2s}\left(\left\Vert \hat{\alpha}_{s+1}(\overline{E},\dots,
\overline{E})\right\Vert^4-|\langle \hat{\alpha}_{s+1}(E,\dots,E),\hat{\alpha}_{s+1}
(E,\dots,E)\rangle|^2\right)^{1/2}.
\]
This, combined with (\ref{1}) yields (\ref{curvaturef}), where
\begin{equation*}
\hat{c}_s=
\begin{cases}
\,2^{(3-7s)/3}\left\Vert\bold{C}_{s+1}\right\Vert^{2}&\text{ \ if \ }s\equiv 0\;
\mathrm{mod}\;3,\\[3mm]
\,2^{(1-7s)/3}\left\Vert\bold{C}_{s+1}\right\Vert^{2}
&\text{ \ if \ }s\equiv 1\;\mathrm{mod}\; 3, \\[2mm]
\,2^{-(1+7s)/3}\left(\left\Vert \bold{C}_{s+1}\right\Vert^{4}-|(\bold{C}_{s+1},
\overline{\bold{C}}_{s+1})|^2\right)^{1/2} &\text{ \ if \ }s\equiv 2\;\mathrm{mod}\; 3.
\end{cases}
\end{equation*}

Using (\ref{1}) and the fact that the $s$-th Hopf differential of $\hat{g}$ is written as
$$
\hat{\Phi}_{s}=4^{-(s+1)} \left\langle \hat{\alpha}_{s+1}(E,\dots,E),\hat{\alpha}_{s+1}
(E,\dots,E)\right\rangle\phi^{2s+2},
$$
we obtain that 
\[
\hat{\Phi}_{s}=4^{-(s+1)}\kappa^{2\frac{s+1}{3}}
\sum\limits_{j=1}^{m}(\overline{c}_j^{s+1})^2\phi^{2s+2} \;\;\text{if}\;\; s\equiv2
\;\mathrm{mod}\; 3
\]
and $\hat{\Phi}_{s}=0$  otherwise.

The fact that the second Hopf differential $\Phi$ of $g$ is given by $\Phi=2^{-2}
\kappa^2\phi^6$ completes the proof of part $(iii)$, where 
$\hat{d}_s=2^{-4(s+1)/3}(\overline{\bold{C}}_{s+1},\bold{C}_{s+1}).$ 
Obviously, all Hopf differentials are holomorphic and consequently $\hat{g}$ is exceptional according to
Theorem \ref{ena}.
\end{proof}

In the subsequent lemma, we determine the associated family of any surface 
$\hat{g}=g_{\bold{a},{\pmb{\varphi}}}.$

\begin{lemma}\label{asociatefamilygtheta}
The associated family $\hat{g}_\varphi$ of any minimal surface $\hat{g}=g_{\bold{a},
{\pmb{\theta}}}$ is given by $\hat{g}_\varphi=g_{\bold{a},{\pmb{\varphi}}}$, where
${\pmb{\varphi}}=(\theta_1+\varphi,\dots,\theta_m+\varphi).$
\end{lemma}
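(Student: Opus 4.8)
The plan is to show that the one‑parameter family $\varphi\mapsto g_{\bold a,\pmb{\varphi}}$, with $\pmb{\varphi}=(\theta_1+\varphi,\dots,\theta_m+\varphi)$, is exactly the associated family of $\hat g=g_{\bold a,\pmb{\theta}}$. Since this family passes through $\hat g$ at $\varphi=0$ and consists of minimal isometric immersions of $(M,ds^2)$, and since the associated family is unique up to congruence (the second fundamental form together with the normal connection determines the immersion up to an isometry of $\mathbb{S}^{6m-1}$, as $M$ is simply connected), it suffices to verify the defining relation \eqref{sff}: that the second fundamental form of $g_{\bold a,\pmb{\varphi}}$ is obtained from that of $\hat g$ by precomposition with $J_\varphi$ followed by a parallel vector bundle isometry.

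The first step is the composition law $J_{\theta_j}J_\varphi=J_{\theta_j+\varphi}$, equivalently $J_\varphi\overline{E}=e^{-i\varphi}\overline{E}$. Applying \eqref{sff} to each $g_{\theta_j}$ gives $\alpha^{g_{\theta_j}}(J_\varphi X,Y)=T_{\theta_j}\alpha^g(J_{\theta_j+\varphi}X,Y)$, while $\alpha^{g_{\theta_j+\varphi}}(X,Y)=T_{\theta_j+\varphi}\alpha^g(J_{\theta_j+\varphi}X,Y)$, so that $(g_{\theta_j})_\varphi$ is congruent to $g_{\theta_j+\varphi}$ via the parallel isometry $T_{\theta_j+\varphi}T_{\theta_j}^{-1}$. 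Summing over $j$ and using $\hat\alpha_2=\sum_j a_j\alpha^{g_{\theta_j}}$, I obtain $\hat\alpha_2(J_\varphi X,Y)=\sum_j a_j T_{\theta_j}\alpha^g(J_{\theta_j+\varphi}X,Y)$, whereas $\alpha^{g_{\bold a,\pmb{\varphi}}}(X,Y)=\sum_j a_j T_{\theta_j+\varphi}\alpha^g(J_{\theta_j+\varphi}X,Y)$. Thus on the first normal bundle the two second fundamental forms differ precisely by the summand‑wise parallel isometry sending $T_{\theta_j}\eta$ to $T_{\theta_j+\varphi}\eta$.

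To promote this to a genuine congruence I would invoke the fundamental theorem of submanifolds in space forms (cf. \cite{DF}): it is enough to produce a parallel vector bundle isometry $S_\varphi$ between the full normal bundles of $\hat g$ and $g_{\bold a,\pmb{\varphi}}$ that carries $\hat\alpha_2(J_\varphi\cdot,\cdot)$ to $\alpha^{g_{\bold a,\pmb{\varphi}}}$ and intertwines the normal connections. Here I would use the explicit description of all higher fundamental forms from the Claim in Proposition~\ref{hola}: formula \eqref{1} expresses $\hat\alpha_s(\overline{E},\dots,\overline{E})$ through the frame $\{g_{\theta_j},\,dg_{\theta_j}(\overline E),\,T_{\theta_j}(\xi_1\pm i\xi_2),\,T_{\theta_j}\xi_3,\,dg_{\theta_j}(E)\}$ and the coefficient vectors $\bold{C}_s$, which are built from $\bold a$ and $T_{\pmb{\theta}}$ by the recursion \eqref{celeosksananea}. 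For $g_{\bold a,\pmb{\varphi}}$ the very same data occur with $\theta_j$ replaced by $\theta_j+\varphi$ and with $T_{\pmb{\theta}}$ replaced by $T_{\pmb{\varphi}}=e^{-i\varphi}T_{\pmb{\theta}}$. I would then define $S_\varphi$ on each plane bundle $N_s^{\hat g}$ by matching these frames, with the bookkeeping phases dictated by \eqref{celeosksananea}, and check, using the parallelism of the $T_{\theta_j}$ together with equations \eqref{naii}--\eqref{naiii}, that $S_\varphi$ is parallel and preserves the second fundamental forms; congruence with $\hat g_\varphi$ then follows.

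The main obstacle is exactly this last step. The normal bundle of the direct sum contains, besides the summand‑wise directions, the ``mixing'' directions among the different copies of $\mathbb{R}^6$, on which $S_\varphi$ is not pinned down by the first‑order comparison of the preceding paragraph. The real work is to verify that the frame identification on all of $N_{\hat g}M$ is simultaneously an isometry, parallel for the normal connection, and compatible with the orthogonality relations \eqref{1}--\eqref{6}, so that the phases introduced by the replacement $T_{\pmb{\theta}}=e^{-i\varphi}T_{\pmb{\varphi}}$ in \eqref{celeosksananea} propagate consistently through every higher normal bundle; once this bookkeeping is in place, the identification $\hat g_\varphi=g_{\bold a,\pmb{\varphi}}$ is a formal consequence of \eqref{sff} and the composition law $J_{\theta_j}J_\varphi=J_{\theta_j+\varphi}$.
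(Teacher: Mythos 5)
Your strategy---verify the defining relation \eqref{sff} directly and then invoke the uniqueness part of the fundamental theorem of submanifolds---is legitimate in principle, and your first step is correct: using $J_{\theta_j}J_\varphi=J_{\theta_j+\varphi}$, each summand satisfies $(g_{\theta_j})_\varphi\cong g_{\theta_j+\varphi}$, and the two second fundamental forms match under the summand-wise map $T_{\theta_j}\eta\mapsto T_{\theta_j+\varphi}\eta$. But the proposal has a genuine gap, which you yourself flag and then defer as ``bookkeeping'': the construction of a \emph{parallel} vector bundle isometry $S_\varphi$ on the \emph{full} normal bundle $N_{\hat g}M$. This is not bookkeeping---it is the entire content of the lemma. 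The summand-wise map is only defined on the rank-$3m$ subbundle $\oplus_j N_{g_{\theta_j}}M$, whereas $N_{\hat g}M$ has rank $6m-3$; the complement consists exactly of the ``mixing'' directions you mention (relative positions $\sum_j b_jg_{\theta_j}$ and relative tangents $\sum_j b_j dg_{\theta_j}(X)$ with $\sum_j a_jb_j=0$), and these directions genuinely occur in the higher normal bundles of $\hat g$ (see the cases $s\equiv0,1,5\;\mathrm{mod}\;6$ in \eqref{1}). Moreover, the normal connection does not preserve the splitting into the $N_s^{\hat g}$'s, so parallelism of a block-wise defined map cannot be checked block by block. Without carrying out this verification, the fundamental theorem of submanifolds cannot be applied, and the argument assumes what it must prove.

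It is worth contrasting this with the paper's proof, which bypasses the normal-bundle analysis entirely by using machinery already in place. Since $T_{\pmb{\varphi}}=e^{-i\varphi}T_{\pmb{\theta}}$, the recursion \eqref{celeosksananea} immediately yields $\bold{C}^f_s=e^{-i\varphi}\bold{C}_s$ for $f=g_{\bold{a},\pmb{\varphi}}$ (all projection coefficients scale consistently), whence Proposition \ref{hola}(iii) gives $\Phi^f_s=e^{2i\varphi}\hat{\Phi}_s$ for every $s$ (and $\Vert\bold{C}^f_s\Vert=\Vert\bold{C}_s\Vert$, so the $a$-invariants agree as well). The congruence theorem \cite[Theorem 5.2]{V} then states precisely that an isometric minimal surface whose Hopf differentials are those of $\hat g$ rotated by $e^{2i\varphi}$ is the member $\hat g_\varphi$ of the associated family. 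In other words, the ``formal consequence'' you hope for does exist, but the correct tool is not the fundamental theorem of submanifolds applied by hand; it is a congruence theorem tailored to minimal surfaces, whose hypotheses reduce, thanks to Proposition \ref{hola}, to linear algebra on the vectors $\bold{C}_s$. Completing your route would amount to redoing the inductive Claim of Proposition \ref{hola} for the comparison map $S_\varphi$---essentially reproving that congruence theorem in this special case.
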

\begin{proof}
Let $f\colon M\to\mathbb{S}^{6m-1}$ be the minimal surface
given by $f=g_{\bold{a},{\pmb{\varphi}}}.$ From (\ref{celeosksananea}) we can
easily see that the complex vectors $\bold{C}^{f}_{s},\,\bold{C}_{s}\in\mathbb{C}^m
\smallsetminus\{0\}$ associated to $f$ and $\hat{g}=g_{\bold{a},{\pmb{\theta}}}$,
respectively, satisfy
\[
\bold{C}^{f}_{s}=e^{-i\varphi}\bold{C}_{s} \;\;\text{for any}\;\; 2\le s\le 3m.
\]

Moreover, Proposition \ref{hola}(iii) implies that the $s$-th Hopf differential of $f$ is
given by
\begin{equation*}
\Phi^f_s=
\begin{cases}
\,d^f_s\Phi^{(s+1)/3}&\text{ \,if\, }s\equiv 2\;\mathrm{mod}\; 3, \\[1mm]
\,0&\text{ \,\,otherwise, }\
\end{cases}
\end{equation*}
where $d^f_s=2^{-4(s+1)/3}(\overline{\bold{C}}^f_{s+1},\bold{C}^f_{s+1}).$
Equivalently, we have
\begin{equation*}\label{kanahopf}
\Phi^f_s=
\begin{cases}
\,2^{-4(s+1)/3}e^{2i\varphi}(\overline{\bold{C}}_{s+1},\bold{C}_{s+1})\Phi^{(s+1)/
3}&\text{ \,if\, }s\equiv 2\;\mathrm{mod}\; 3, \\[1mm]
\,0&\text{ \,\,otherwise.}
\end{cases}
\end{equation*}
Thus the  Hopf differentials of $f$ and $\hat{g}$ satisfy
\[
\Phi^f_s= e^{2i\varphi}\hat{\Phi}_s \;\; \text{for any} \;\;1\le s\le 3m-1.
\]
According to \cite[Theorem 5.2]{V}, the associated family of the surface $\hat{g}$ is
$g_{\bold{a},{\pmb{\varphi}}}$ and this completes the proof.
\end{proof}

\section{Exceptional surfaces and the Ricci-like condition}

In this section, we study exceptional surfaces that satisfy the Ricci-like condition $(\ast).$ To prove our main results, we need the following proposition.

\begin{proposition}\label{whhat}
Let $f\colon M\to \mathbb{S}^n$ be a nonflat $r$-exceptional surface
which satisfies the Ricci-like condition $(\ast).$ Then the following hold:

For any $1\leq s\leq r+1,$ we have:
\begin{equation}\label{aexc}
\left\Vert \alpha_{s+1}\right\Vert^2=
\begin{cases}
\,b_s(1-K)^{s/3} &\text{ \,if\, }s\equiv 0\;\mathrm{mod}\; 3, \\[1mm]
\,b_s(1-K)^{(s+2)/3} &\text{ \,if\, }s\equiv 1\;\mathrm{mod}\; 3, \\[1mm]
\,b_s(1-K)^{(s+1)/3} &\text{ \,if\, }s\equiv 2\;\mathrm{mod}\; 3,
\end{cases}
\end{equation}
where $\left\{ b_s\right\}, \left\{ \rho_s\right\}$ are sequences of positive numbers such that $b_1=2, b_{s+1}=\rho_s^2b_s, \rho _s\leq1$ and $\rho _s=1$ if $s\equiv0,1\;\mathrm{mod}\; 3.$ 

Moreover for any $1\leq s\leq r,$ the following hold:
\begin{equation}\label{first}
\Phi_s=0 \text{\,\, if\,\,\,}s\equiv0,1\;\mathrm{mod}\; 3,\\
\end{equation}
\begin{equation}\label{olalazoun}
K_s^*=
\begin{cases}
\,K &\text{ \,if\, }s\equiv 0\;\mathrm{mod}\; 3, \\[1mm]
\,-K&\text{ \,if\, }s\equiv 1\;\mathrm{mod}\; 3, \\[1mm]
\,0 &\text{ \,if\, }s\equiv 2\;\mathrm{mod}\; 3,
\end{cases}
\end{equation}
and
\begin{equation}\label{last}
K_s^\perp=
\begin{cases}
\,c_s(1-K)^{s/3} &\text{ \,if\, }s\equiv 0\;\mathrm{mod}\; 3, \\[1mm]
\,c_s(1-K)^{(s+2)/3} &\text{ \,if\, }s\equiv 1\;\mathrm{mod}\; 3, \\[1mm]
\,c_s(1-K)^{(s+1)/3} &\text{ \,if\, }s\equiv 2\;\mathrm{mod}\; 3,
\end{cases}
\end{equation}
 where $c_s=2^{-s}\rho _sb_s.$ 
\end{proposition}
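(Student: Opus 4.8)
The plan is to argue by a single induction on $s$ on the regular set of $f$, establishing the four formulas simultaneously; the engine is the interplay between the Ricci-like condition $(\ast)$, the two expressions for $\Delta\log\Vert\alpha_{s+1}\Vert^2$ furnished by Proposition \ref{3i}, and the intrinsic curvature identities of Proposition \ref{5}, the latter being what lets one pass from level $s$ to level $s+1$. For the base case $s=1$, the Gauss equation gives $\Vert\alpha_2\Vert^2=2(1-K)$, so $b_1=2$ and the exponent $(1+2)/3=1$ agrees with \eqref{aexc}. Feeding this into Proposition \ref{3i}(i) together with $(\ast)$ yields $6K=2(2K-K_1^\ast)$, hence $K_1^\ast=-K$; since $f$ is nonflat this is not identically zero, so the contrapositive of Proposition \ref{3i}(iv) forces $\Phi_1=0$, whence the first curvature ellipse is a circle, $\kappa_1=\mu_1$, $K_1^\perp=(1-K)$, $\rho_1=1$ and $c_1=1$.

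For the inductive step, assuming the formulas through level $s$, I would first determine $K_s^\ast$: from \eqref{aexc} and $(\ast)$ one has $\Delta\log\Vert\alpha_{s+1}\Vert^2=6e_sK$, where $e_s\in\{s/3,(s+2)/3,(s+1)/3\}$ is the exponent in \eqref{aexc}, while Proposition \ref{3i}(i) gives $\Delta\log\Vert\alpha_{s+1}\Vert^2=2((s+1)K-K_s^\ast)$; equating them yields $K_s^\ast=((s+1)-3e_s)K$, which is exactly $K$, $-K$, or $0$ according as $s\equiv0,1,2\ \mathrm{mod}\ 3$, proving \eqref{olalazoun}. When $s\equiv0,1\ \mathrm{mod}\ 3$ we have $K_s^\ast=\pm K\not\equiv0$, so Proposition \ref{3i}(iv) gives $\Phi_s=0$, which is \eqref{first}; then $\kappa_s=\mu_s$ by \eqref{what}, so $K_s^\perp=2^{-s}\Vert\alpha_{s+1}\Vert^2$, i.e. $\rho_s=1$ and $c_s=2^{-s}b_s$, establishing \eqref{last} in these cases.

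The hard part is the case $s\equiv2\ \mathrm{mod}\ 3$, where $K_s^\ast=0$ and $\Phi_s$ need not vanish; here one must show that $\rho_s:=2^sK_s^\perp/\Vert\alpha_{s+1}\Vert^2=2\kappa_s\mu_s/(\kappa_s^2+\mu_s^2)\le1$ is in fact a constant. On the open set where $\Phi_s\neq0$ I would apply Proposition \ref{3i}(ii) with $K_s^\ast=0$, which together with $(\ast)$ gives $\Delta\log(\Vert\alpha_{s+1}\Vert^2\pm2^sK_s^\perp)=\tfrac{s+1}{3}\Delta\log(1-K)$; since $\Vert\alpha_{s+1}\Vert^2=b_s(1-K)^{(s+1)/3}$, this is equivalent to the harmonicity of $u:=\log(1+\rho_s)$ and $v:=\log(1-\rho_s)$. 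The key observation is then that $e^{u}+e^{v}=(1+\rho_s)+(1-\rho_s)=2$ is constant, so $0=\Delta(e^u+e^v)=e^u|\nabla u|^2+e^v|\nabla v|^2$ using $\Delta u=\Delta v=0$; positivity forces $\nabla u=\nabla v=0$, hence $\rho_s$ is constant (and $<1$ wherever $\Phi_s\neq0$). If instead $\Phi_s\equiv0$ then $\rho_s\equiv1$; in all cases $\rho_s$ is a constant in $(0,1]$, giving $K_s^\perp=2^{-s}\rho_sb_s(1-K)^{(s+1)/3}$, which is \eqref{last}.

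Finally I would advance the length of the fundamental form from level $s$ to level $s+1$ by solving the intrinsic curvature identity of Proposition \ref{5} for $\Vert\alpha_{s+2}\Vert^2=2^sK_s^\perp\big(K_s^\perp\Vert\alpha_s\Vert^2/(2^{s-2}(K_{s-1}^\perp)^2)-K_s^\ast\big)$ (with the $s=1$ variant used for the first step). Substituting the level-$(s-1)$ and level-$s$ data and writing $K=1-(1-K)$, the summand $2^sK_s^\perp K_s^\ast$ splits into contributions at two consecutive powers of $1-K$; the bookkeeping I expect to verify is that exactly one of these cancels against the first summand, leaving the single power $(1-K)^{e_{s+1}}$ demanded by \eqref{aexc} with $b_{s+1}=\rho_s^2b_s$. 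This cancellation is not accidental: it reduces in every case to the relation $b_s=\rho_{s-1}^2b_{s-1}$ carried over from the previous level, while for $s\equiv2\ \mathrm{mod}\ 3$ the $K_s^\ast$ term is absent and the power emerges directly. This closes the induction, the only genuinely nonroutine ingredient being the harmonicity-plus-maximum-principle argument that forces $\rho_s$ to be constant when $s\equiv2\ \mathrm{mod}\ 3$.
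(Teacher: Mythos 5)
Your proof is correct and follows essentially the same route as the paper's: an induction in which Proposition \ref{3i} combined with $(\ast)$ pins down $K_s^\ast$, $\Phi_s$ and $K_s^\perp$ at each level, and Proposition \ref{5} advances $\Vert\alpha_{s+2}\Vert^2$ to the next level via exactly the recursion $b_{s+1}=\rho_s^2b_s$ that you identify. The only real divergence is that what you call the hard part --- the harmonicity argument forcing $\rho_s$ to be constant when $s\equiv 2\;\mathrm{mod}\;3$ --- is superfluous: since $f$ is $r$-exceptional, all curvature ellipses up to order $r$ have constant eccentricity (Theorem \ref{ena} and the definition of exceptional), so $\rho_s$ is constant by hypothesis (this is precisely the opening line of the paper's proof), and your extra argument, though correct on each connected component of $\{\Phi_s\neq 0\}$, is the one place where your write-up leaves small, patchable gaps about globalizing across the zeros of $\Phi_s$.
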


\begin{proof}
We set $\rho _s=2^sK_s^{\perp}/\left\Vert \alpha_{s+1}\right\Vert^2.$
Since $f$ is $r$-exceptional, the function $\rho _s$ is constant for any $1\leq s\leq r.$
We proceed by induction on $r.$

Assume that $f$ is $1$-exceptional. The Gauss equation implies $\left\Vert
\alpha_2\right\Vert^2=2(1-K).$ Then from Proposition \ref{3i}(i) for $s=1$ and the
Ricci-like condition $(\ast)$, we find $K_1^{\ast}=-K.$ Moreover, we have $K_1^{\perp}=\rho_1(1-K).$ 
We claim that $\rho_1=1.$ Assume to the contrary that
$\rho_1\neq1.$ Then $\Phi_1\neq0$ and Proposition \ref{3i}(ii) combined with the
condition $(\ast)$ yield $K^*_1=K$, which is a contradiction. Hence
$\rho_1=1$ and Proposition \ref{5} yields (\ref{aexc}) for $s=2$ with $b_2=2.$ This
settles the case $r=1.$

Suppose now that (\ref{first})-(\ref{last}) hold if $f$ is $r$-exceptional. We shall prove
that (\ref{first})-(\ref{last}) also hold assuming that $f$ is $(r+1)$-exceptional. By Theorem \ref{ena}, the Hopf differential
$\Phi _{r+1}$ is holomorphic, hence either it is identically zero or its zeros are isolated.

At first we assume that $r\equiv0\;\mathrm{mod}\;3.$ From the inductive assumption,
we have 
$$
\left\Vert \alpha_{r+2}\right\Vert^2=b_{r+1}(1-K)^{(r+3)/3}.
$$
We claim that $\rho_{r+1}=1.$ Arguing indirectly, we assume that $\Phi _{r+1}\neq 0.$
Then Proposition \ref{3i}(iv) yields $K_{r+1}^{\ast}=0.$ Taking into account the 
condition $(\ast)$, Proposition \ref{3i}(ii) implies that $M$ is flat and this is a
contradiction. Thus $\Phi _{r+1}$ is identically zero, or equivalently $\rho _{r+1}=1.$
From Proposition \ref{3i}(iii) and the condition $(\ast)$, we obtain $K_{r+1}^
{\ast}=-K.$ Furthermore, we have 
$$
K_{r+1}^{\perp}=2^{-(r+1)}\rho _{r+1}\left\Vert \alpha_{r+2}\right\Vert^2,
$$
or equivalently 
$$
K_{r+1}^{\perp}=c_{r+1}(1-K)^{(r+3)/3},
$$
with $c_{r+1}=2^{-(r+1)}b_{r+1}.$ Then using Proposition \ref{5}, we obtain 
$$
\left\Vert \alpha_{r+3}\right\Vert^2=b_{r+2}(1-K)^{(r+3)/3},
$$
with $b_{r+2}=b_{r+1}.$

Assume now that $r\equiv1\;\mathrm{mod}\;3.$ From the inductive assumption, we have 
$$
\left\Vert \alpha_{r+2}\right\Vert ^{2}=b_{r+1}(1-K)^{(r+2)/3}.
$$
If \,$\Phi _{r+1}\neq 0,$ then Proposition \ref{3i}(iv) yields $K_{r+1}^{\ast}=0.$ If
$\Phi _{r+1}$ is identically zero, or equivalently $\rho _{r+1}=1$, then Proposition
\ref{3i}(iii) and the condition $(\ast)$ imply that $K_{r+1}^{\ast}=0.$
Furthermore, we have 
$$
K_{r+1}^{\perp}=2^{-(r+1)}\rho _{r+1}\left\Vert \alpha_{r+2}\right\Vert^2,
$$
or equivalently 
$$
K_{r+1}^{\perp}=c_{r+1}(1-K)^{(r+2)/3},
$$
with $c_{r+1}=2^{-(r+1)}\rho_{r+1}b_{r+1}.$ From Proposition \ref{5}, we obtain 
$$
\left\Vert \alpha_{r+3}\right\Vert^2=b_{r+2}(1-K)^{(r+2)/3},
$$
with $b_{r+2}=\rho^2_{r+1}b_{r+1}.$

Finally, we suppose that $r\equiv2\;\mathrm{mod}\;3.$ From the inductive assumption,
we have 
$$
\left\Vert \alpha_{r+2}\right\Vert^2=b_{r+1}(1-K)^{(r+1)/3}.
$$
We claim that $\rho_{r+1}=1.$ Assume to the contrary that $\rho_{r+1}\neq1$ or
equivalently $\Phi _{r+1}\neq 0.$ Then Proposition \ref{3i}(iv) yields $K_{r+1}^{\ast}
=0.$ Taking into account the Ricci-like condition $(\ast),$ Proposition \ref{3i}(ii) implies that
$M$ is flat, which is a contradiction. Hence $\Phi _{r+1}$ is identically zero, or
equivalently $\rho _{r+1}=1.$ From Proposition \ref{3i}(iii) and the condition
$(\ast)$, we obtain $K_{r+1}^{\ast}=K.$ Furthermore, we have
$$
K_{r+1}^{\perp}=2^{-(r+1)}\rho _{r+1}\left\Vert \alpha_{r+2}\right\Vert^2,
$$
or equivalently
$$
K_{r+1}^{\perp}=c_{r+1}(1-K)^{(r+1)/3},
$$
with $c_{r+1}=2^{-(r+1)}\rho_{r+1}b_{r+1}.$ Using Proposition \ref{5}, it follows that 
$$
\left\Vert \alpha_{r+3}\right\Vert^2=b_{r+2}(1-K)^{(r+4)/3}
$$
with $b_{r+2}=b_{r+1}$ and this completes the proof.
\end{proof}

We are ready to prove the main result of this section.
\medskip
\begin{theorem}\label{kanenadenexwlabel}
Let $f\colon M\to\mathbb{S}^n$ be a nonflat simply connected exceptional surface
with substantial odd codimension. If $f$ satisfies the Ricci-like condition $(\ast)$ away
from the isolated points with Gaussian curvature $K=1$, then $n=6m-1$ and there exists
$\bold{a}=(a_1,\dots,a_m)\in\mathbb{S}^{m-1}\subset\mathbb{R}^m$ with $\Pi_{j=1}^{m}a_j\neq0$ and 
${\pmb{\theta}}=(\theta_1,\dots,\theta_m)\in\mathbb{S}^1\times\cdots\times\mathbb{S}^1,$ where 
$0\leq \theta _1<\cdots<\theta_m<\pi$ such that $f=g_{\bold{a},\pmb{\theta}}.$
\end{theorem}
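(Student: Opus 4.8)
The plan is to read off the complete geometric data of $f$ from Proposition \ref{whhat}, use it first to fix the ambient dimension, then to realize $f$ as isometric to a pseudoholomorphic curve, and finally to single out $f$ among the surfaces $g_{\bold a,\pmb\theta}$ by matching Hopf differentials and invoking rigidity. Since $f$ is exceptional it is $(\ell-1)$-exceptional for $\ell=[(n-1)/2]$, so Proposition \ref{whhat} provides the mod-$3$ periodic formulas for $\|\alpha_{s+1}\|^2$, $K^\perp_s$, $K^*_s$ and $\Phi_s$ in the relevant ranges of $s$. As the codimension is odd, $n$ is odd and the last normal bundle $N^f_\ell$ has rank one; its curvature ellipse therefore degenerates to a segment, so $\mu_\ell=0$ and $K^\perp_\ell=0$, while $\|\alpha_{\ell+1}\|^2\neq0$ by substantiality forces $\Phi_\ell\neq0$ through (\ref{what}). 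Applying Proposition \ref{3i}(ii) with $r=\ell$ (legitimate since $f$ is $(\ell-1)$-exceptional and $\Phi_\ell\not\equiv0$) and setting $K^\perp_\ell=0$ in its two identities forces $K^*_\ell=0$ and $\Delta\log\|\alpha_{\ell+1}\|^2=2(\ell+1)K$. Substituting the three branches of (\ref{aexc}) at $s=\ell$ and using $(\ast)$ in the form $\Delta\log(1-K)=6K$ gives $2\ell K$, $2(\ell+2)K$ or $2(\ell+1)K$ according as $\ell\equiv0,1,2\;\mathrm{mod}\;3$; only the last agrees with $\Delta\log\|\alpha_{\ell+1}\|^2=2(\ell+1)K$ when $f$ is nonflat. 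Hence $\ell\equiv2\;\mathrm{mod}\;3$, and writing $\ell=3m-1$ yields $n=2\ell+1=6m-1$.

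Next I would build the model. By Lemma \ref{avtf} the function $1-K$ is of absolute value type, and since $M$ is simply connected the realization part of the characterization theorem of \cite{EschVl} produces a pseudoholomorphic curve $g\colon M\to\mathbb S^5$ isometric to $(M,ds^2)$, hence to $f$; write $\Phi$ for its second Hopf differential and $g_{\bold a,\pmb\theta}$ for the associated direct sums of Proposition \ref{hola}. The preparatory point is that $f$ carries Hopf differentials of exactly the shape found for $g_{\bold a,\pmb\theta}$. By Proposition \ref{whhat} we already have $\Phi_s=0$ for $s\not\equiv2\;\mathrm{mod}\;3$. For $s=3k+2$, the identity $\partial\bar\partial\log\big((1-K)F^3\big)=0$ from the proof of Lemma \ref{avtf} gives $(1-K)F^3=|\psi|^2$ for a holomorphic $\psi$; combining (\ref{aexc}), (\ref{last}) and (\ref{what}) then shows that the holomorphic coefficient $\langle\alpha^{(s+1,0)}_{s+1},\alpha^{(s+1,0)}_{s+1}\rangle$ of $\Phi_s$ has modulus a constant multiple of $|\psi|^{2(k+1)}$, whence it is itself a constant multiple of $\psi^{2(k+1)}$. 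The same computation for $g$ uses the same $\psi$, so $\Phi_{3k+2}=\mu_k\,\Phi^{\,k+1}$ for constants $\mu_k$, matching the form of $\hat\Phi_s$ in Proposition \ref{hola}(iii).

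It then remains to choose $\bold a\in\mathbb S^{m-1}$ with $\prod_{j=1}^m a_j\neq0$ and $0\le\theta_1<\cdots<\theta_m<\pi$ so that the constants $\hat d_{3k+2}=2^{-4(k+1)}(\overline{\bold C}_{3k+3},\bold C_{3k+3})$ generated by the recursion (\ref{celeosksananea}) reproduce the $\mu_k$, up to the single overall phase that Lemma \ref{asociatefamilygtheta} shows corresponds to moving within the associated family. Once such a choice is made, $f$ and $g_{\bold a,\pmb\theta}$ are isometric minimal surfaces in $\mathbb S^{6m-1}$ with identical Hopf differentials, and the rigidity underlying Lemma \ref{asociatefamilygtheta}, namely \cite[Theorem 5.2]{V}, forces $f=g_{\bold a,\pmb\theta}$ up to congruence.

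I expect the realization step to be the main obstacle: one must invert the Gram--Schmidt-type recursion (\ref{celeosksananea}), subject to the orthogonality constraints (\ref{2})--(\ref{6}), so as to recover $m$ nonzero amplitudes and $m$ distinct angles from the prescribed sequence $\{\mu_k\}_{k=0}^{m-1}$. I would attempt this by induction on $m$, splitting off a single factor $a_1 g_{\theta_1}$ at each stage and verifying that the residual Hopf data are those of a type-$(\ast\ast)$ surface with $m-1$ summands, thereby reducing the realization to the lower-dimensional case; the base case $m=1$ being just the statement that a nonflat exceptional surface satisfying $(\ast)$ and substantial in $\mathbb S^5$ is a pseudoholomorphic curve.
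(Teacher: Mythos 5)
Your first two steps are correct and essentially reproduce the paper's argument. The dimension restriction is the same computation the paper does (the paper makes the application of Proposition \ref{3i}(ii) to the rank-one last normal bundle legitimate by viewing $f$ as a surface in $\mathbb{S}^{n+1}$, so that $N^f_\ell$ becomes a flat plane bundle, but this is cosmetic). Your route to the curve $g$ differs mildly from the paper's: the paper extracts a holomorphic $6$-form $\Phi$ from a branch of $f_{r_0}^{3/(r_0+1)}$ and realizes it as the second Hopf differential of a pseudoholomorphic curve via \cite[Theorem 11.1]{EschVl}, whereas you take the isometric realization theorem and then argue that each $f_{3k+2}$ and $\psi^{2(k+1)}$ are holomorphic with proportional moduli, hence proportional; both give $\Phi_{3k+2}=\mu_k\Phi^{k+1}$, and your version is fine.

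The genuine gap is the step you yourself label ``the main obstacle'': you never prove that admissible $({\bold a},{\pmb\theta})$ exist realizing the data of $f$, and the strategy you sketch is both stronger than necessary and unlikely to work as stated. You propose to match the complex constants $\mu_k$ up to a single overall phase by inverting the recursion (\ref{celeosksananea}) through an induction that splits off one summand $a_1g_{\theta_1}$ at a time; but the invariant sequence of an $m$-summand surface has length $3m-1$ while that of an $(m-1)$-summand surface has length $3m-4$, so the ``residual data'' are not of type $(\ast\ast)$ with $m-1$ summands in any evident sense, and nothing in your argument controls the relative phases of the $\mu_k$ that such a construction can achieve. The paper closes this step by a weaker, purely real matching: by Proposition \ref{whhat} and Proposition \ref{hola}, the sequences $b_r,c_r$ of both $f$ and $\hat g$ are determined recursively by the constants $\rho_r$, $r\equiv2\;\mathrm{mod}\;3$, alone, so it suffices to arrange $\rho_r=\hat\rho_r$ for those $r$; this is done inductively through conditions of the form $\left\vert\left(T_{2{\pmb\theta}}{\bold C},{\bold C}\right)\right\vert^2=\left(1-\rho^2\right)\Vert{\bold C}\Vert^4$ on $({\bold a},{\pmb\theta})$, one for each $r\equiv2\;\mathrm{mod}\;3$. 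Crucially, \cite[Theorem 5.2]{V} is then invoked in its $a$-invariant form: isometric minimal surfaces with equal $a$-invariants already lie in each other's associated families, so the phase coherence you are trying to engineer by hand is supplied by the rigidity theorem for free, and Lemma \ref{asociatefamilygtheta} converts membership in the associated family of $g_{{\bold a},{\pmb\theta}}$ into equality with some $g_{{\bold a},{\pmb\theta}'}$. Without either this moduli-only reduction or an actual proof of your complex matching claim, your argument does not close.
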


\begin{proof}
We claim that $n\equiv 5\;\mathrm{mod}\;6.$ Arguing indirectly, we suppose at first that
$n=6l+1.$ Since $f$ is $(3l-1)$-exceptional, (\ref{aexc}) yields $\left\Vert\alpha_{3l+1}
\right\Vert ^{2}=b_{3l}(1-K)^{l}.$ Moreover, viewing $f$ as a minimal surface in
$\mathbb{S}^{6l+2},$ we obviously have $K_{3l}^{\perp}=K_{3l}^{\ast}=0.$ Then
from Proposition \ref{3i}(ii), we obtain
$$
\Delta \log\left\Vert \alpha_{3l+1}\right\Vert ^2=2(3l+1)K.
$$
Thus $K=0,$ which is a contradiction.

Suppose that $n=6l+3.$ Since $f$ is $3l$-exceptional, (\ref{aexc}) yields
$$
\left\Vert\alpha_{3l+2}\right\Vert^2=b_{3l+1}(1-K)^{l+1}.
$$
Moreover, viewing $f$ as a minimal surface in $\mathbb{S}^{6l+4},$ we obviously have
$K_{3l+1}^{\perp}=K_{3l+1}^{\ast}=0.$ Then from Proposition \ref{3i}(ii), it follows
that
$$
\Delta \log
\left\Vert \alpha_{3l+2}\right\Vert^2=2(3l+2)K.
$$
Thus $K=0,$ which is a contradiction.

Hence $n\equiv 5\;\mathrm{mod}\;6$ and we may set $n=6m-1.$ According to
(\ref{first}), we have that $\Phi _{r}=0$ if $r\equiv0,1\;\mathrm{mod}\;3.$ Let 
$$
r_{0}=\min \left\{ r:2\leq r\leq 3m-1\;\; \text{with}\;\; \Phi _{r}\neq0\right\}.
$$
Obviously $r_0\equiv2\;\mathrm{mod}\;3.$ Let $z$ be a local complex coordinate
such that the induced metric is given by $ds^2=F|dz|^2.$ From the definition of Hopf
differentials we know that $\Phi _r=f_rdz^{2r+2},$ where $f_r=\langle\alpha_
{r+1}^{(r+1,0)}, \alpha_{r+1}^{(r+1,0)}\rangle.$ 

For any $r\equiv2\;\mathrm{mod}\;3$ and $r\geq r_0$ such that $\Phi_r\neq0,$ we may 
write $\Phi _r=|f_r|e^{i\sigma _r}dz^{2r+2}.$ Using (\ref{aexc}), (\ref{last}) and
(\ref{what}), we obtain 
\begin{equation*}
\Phi _r=2^{-(r+2)}b_rF^{r+1}e^{i\sigma _r}(1-K)^{(r+1)/3}
\left(1-\rho _r^2\right)^{1/2}dz^{2r+2}.
\end{equation*}

We pick a branch $h$ of $f_{r_0}^{3/(r_0+1)}$ and define the form $\Phi =c_0hdz^6,$ 
where $c_0$ is given by
\begin{equation*}
c_0=
\begin{cases}
\,\left(\frac{2}{b_{r_0}\left(1-\rho^2_{r_0}\right)^{1/2}}\right)^{3/(r_0+1)} &\text{if $r_0<3m-1,$}\\[3mm]
\,\left(\frac{2}{b_{r_0}}\right)^{3/(r_0+1)} &\text{if $r_0=3m-1.$}
\end{cases}
\end{equation*}
It is obvious that $\Phi $ is well defined and holomorphic. It follows that
\begin{equation*}
\Phi _{r}=\frac{1}{2}b_r\left(1-\rho _r^2\right)^{1/2}e^{i\big(
\sigma _r-\frac{r+1}{r_0+1}\sigma _{r_0}\big)}\Phi ^{(r+1)/3}.
\end{equation*}
From the holomorphicity of $\Phi _r$ and $\Phi,$ we deduce that $\sigma
_r-\frac{r+1}{r_0+1}\sigma _{r_0}$ is constant. Moreover, we easily
see that 
$$
|c_0h|^2=\left(\frac{F}{2}\right)^6\left(1-K\right)^2.
$$ 

Using \cite[Theorem 11.1]
{EschVl}, we infer that\ there exists a pseudoholomorphic curve
$g\colon M\to\mathbb{S}^5$ whose second Hopf differential is $\Phi.$ 

We consider a surface $\hat{g}=g_{\bold{a},\pmb{\theta}}$ which according to
Proposition \ref{hola} is exceptional for any $\bold{a}\in\mathbb{S}^{m-1}$ and
$\pmb{\theta}.$ Setting $\hat{\rho}_s=2^s\hat{K}_s^\perp/\left\Vert\hat{\alpha}_{s+1}\right\Vert^2,$ it follows from Proposition \ref{hola} that
\begin{equation*}
\hat{\rho}_s=
\begin{cases}
\,\left(1-\frac{\left|\left(\bold{C}_{s+1},\overline{\bold{C}}_{s+1}\right)\right|^2}
{\left\Vert \bold{C}_{s+1}\right\Vert^4}\right)^{1/2}
&\text{ \ if \ }s\equiv 2\;\mathrm{mod}\; 3,\\[4mm]
\,\,\,1&\text{ \ otherwise. \ }
\end{cases}
\end{equation*}

We now claim that we can choose $\bold{a}\in\mathbb{S}^{m-1}$ and $\pmb{\theta}$
such that
\[
b_r=\hat{b}_r,\,\, c_r=\hat{c}_r \;\;\text{and}\;\; \rho_r=\hat{\rho}_r \;\;\text{for every}\;\;
1\leq r\leq 6m-3,
\]
where $\hat{b}_r, \hat{c}_r, b_r, c_r $ and $\rho_r$ are the sequences in Proposition
\ref{hola} and Proposition \ref{whhat}, respectively. Obviously, Proposition \ref{whhat}
gives that
\[
b_r=\hat{b}_r=2 \;\;\text{for}\;\;  r=1,2,\;\; c_1=\hat{c}_1=1 \;\;\text{and}\;\; \rho_1=
\hat{\rho}_1=1.
\]
We choose $\bold{a}$ and $\pmb{\theta}$ such that the unitary transformation
$T_{2\pmb{\theta}}$ satisfies
\[
\left|\left(T_{2\pmb{\theta}}\bold{a},\bold{a}\right)\right|^2=1-\rho^2_2.
\]

According to (\ref{celeosksananea}), the above is equivalent to $\rho_2=\hat{\rho}_2.$
Then using Proposition \ref{whhat}, we obtain that 
$$
b_{r+1}=\hat{b}_{r+1},\, c_r=\hat{c}_r \;\;\text{and}\;\; \rho_r=\hat{\rho}_r
\;\;\text{for}\;\; 1\le r\le4.
$$
Similarly, we may choose $\bold{a}$ and $\pmb{\theta}$ such that 
\[
\frac{\left|\left(T_{2\pmb{\theta}}\bold{C}_4,\bold{C}_4\right)\right|^2}{\|\bold{C}_4
\|^4}=1-\rho_5^2,
\]
or equivalently $\rho_5=\hat{\rho}_5,$ according to (\ref{celeosksananea}).
Repeating this argument, and choosing $\bold{a}$ and $\pmb{\theta}$ such that
$\rho_r=\hat{\rho}_r$ for any $r\equiv 2\;\mathrm{mod}\; 3$, the claim follows
inductively.

Thus, Proposition \ref{whhat} implies that the $a$-invariants of the minimal surface $f$
coincide with those of $\hat{g}=g_{\bold{a}, \pmb{\theta}}$ for appropriate $\bold{a}$
and $\pmb{\theta}.$

It follows from \cite[Theorem 5.2]{V} that $f$ is a member of the associate
family of $\hat{g}$, which in view of Lemma \ref{asociatefamilygtheta} completes the
proof.
\end{proof}
For the proof of Theorem \ref{artiadimension} below, we recall the following well known
lemma.

\begin{lemma}\label{QPgrad}
Let $M$ be a two-dimensional Riemannian manifold and let $f\colon M\to
\mathbb{R}$ be a smooth function such that $\Delta f=P(f)$ and $\left\Vert\nabla
f\right\Vert^{2}=Q(f)$ for smooth functions $P,Q\colon\mathbb{R}\to \mathbb{R},$
where $\nabla f\ $denotes the gradient of $f.$ Then on $\left\{ p\in
M:\nabla f(p)\neq 0\right\},$ the Gaussian curvature $K$ satisfies 
\begin{equation*}
2KQ+(2P-Q^{\prime})(P-Q^{\prime })+Q(2P^{\prime }-Q^{\prime \prime })=0.
\end{equation*}
\end{lemma}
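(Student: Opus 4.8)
The plan is to localize to the open set $U=\{p\in M:\nabla f(p)\neq0\}$ and to read off the curvature from the Bochner identity, whose right-hand side becomes fully explicit precisely because $\Delta f$ and $\|\nabla f\|^2$ are prescribed functions of $f$ alone. First I would introduce on $U$ the adapted orthonormal frame $e_1=\nabla f/\|\nabla f\|$, $e_2\perp e_1$. Since $\|\nabla f\|^2=Q(f)$ we have $e_1f=\sqrt{Q}$ and $e_2f=0$, and as $\sqrt Q$ is a function of $f$ its derivatives are again controlled by $f$. Computing the Hessian in this frame gives $\nabla^2f(e_1,e_1)=Q'/2$ (from $e_1(\sqrt{Q(f)})=\tfrac{Q'}{2\sqrt Q}\,e_1f=Q'/2$ together with $\langle\nabla_{e_1}e_1,e_1\rangle=0$) and $\nabla^2f(e_2,e_1)=0$ (from $e_2(\sqrt{Q(f)})=0$ and $\langle\nabla_{e_2}e_1,e_1\rangle=0$); symmetry of the Hessian then forces the remaining off-diagonal entry to vanish, while the trace relation $\Delta f=P$ yields $\nabla^2f(e_2,e_2)=P-Q'/2$. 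Hence the Hessian is diagonal and
\[
\|\nabla^2f\|^2=\Big(\tfrac{Q'}{2}\Big)^2+\Big(P-\tfrac{Q'}{2}\Big)^2=P^2-PQ'+\tfrac{Q'^2}{2}.
\]

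Next I would invoke the Bochner formula on the surface,
\[
\tfrac12\Delta\|\nabla f\|^2=\|\nabla^2f\|^2+\langle\nabla f,\nabla\Delta f\rangle+K\|\nabla f\|^2,
\]
using that on a $2$-manifold $\mathrm{Ric}(\nabla f,\nabla f)=K\|\nabla f\|^2=KQ$. The two remaining terms are again pure functions of $f$: from $\Delta(\phi(f))=\phi'(f)\Delta f+\phi''(f)\|\nabla f\|^2$ one gets $\Delta\|\nabla f\|^2=Q'P+Q''Q$, and from $\Delta f=P(f)$ one gets $\langle\nabla f,\nabla\Delta f\rangle=P'(f)\|\nabla f\|^2=P'Q$. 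Substituting the Hessian norm and these two expressions into the Bochner identity and clearing the factor $\tfrac12$ produces a polynomial relation among $P,Q$ and their derivatives; transposing everything to one side gives exactly
\[
2KQ+(2P-Q')(P-Q')+Q(2P'-Q'')=0.
\]

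Since each step is valid pointwise, the identity holds throughout $U=\{\nabla f\neq0\}$, as claimed. The only genuinely delicate point is verifying that the Hessian is diagonal in the adapted frame: this rests on $\|\nabla f\|$ being constant along the level sets of $f$ (equivalently, that after the reparametrization $g=\int Q^{-1/2}\,df$ the gradient flow lines become unit-speed geodesics, so that $\nabla^2f(e_1,e_2)=0$), and it is the step I would state most carefully. Everything after that is bookkeeping, and the principal risk is merely an algebraic slip in matching the final polynomial.
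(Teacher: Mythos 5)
Your proof is correct, and in fact the paper contains no proof to compare it against: Lemma \ref{QPgrad} is stated there only as a recalled ``well known'' fact (it comes from earlier work on the Ricci condition, cf. \cite{V9,V08}), so your argument fills a gap rather than duplicating one. The computation checks out in full: in the adapted frame one gets $\nabla^2f(e_1,e_1)=Q'/2$, $\nabla^2f(e_1,e_2)=0$, $\nabla^2f(e_2,e_2)=P-Q'/2$, hence $\|\nabla^2f\|^2=P^2-PQ'+Q'^2/2$; substituting this together with $\Delta\|\nabla f\|^2=Q'P+Q''Q$ and $\langle\nabla f,\nabla\Delta f\rangle=P'Q$ into the Bochner identity
\begin{equation*}
\tfrac12\Delta\|\nabla f\|^2=\|\nabla^2f\|^2+\langle\nabla f,\nabla\Delta f\rangle+K\|\nabla f\|^2
\end{equation*}
and using $(2P-Q')(P-Q')=2P^2-3PQ'+Q'^2$ gives precisely $2KQ+(2P-Q')(P-Q')+Q(2P'-Q'')=0$, with the sign conventions (geometer's Laplacian, $\mathrm{Ric}=Kg$ on a surface) consistent with the rest of the paper. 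Two remarks. First, the point you flag as ``genuinely delicate'' is in fact immediate from your own frame computation: $\nabla^2f(e_2,e_1)=e_2(e_1f)-(\nabla_{e_2}e_1)f$, and both terms vanish because $e_1f=\sqrt{Q(f)}$ is a function of $f$ alone (so it is killed by $e_2$, since $e_2f=0$) while $(\nabla_{e_2}e_1)f=\langle\nabla_{e_2}e_1,e_1\rangle\sqrt{Q}=0$ as $e_1$ has unit length; no reparametrization of the gradient flow or geodesic interpretation is needed. Second, the classical derivation of this lemma in the literature proceeds instead through coordinates adapted to the level curves of $f$, writing the metric as $ds^2=du^2+G^2\,dv^2$ with $u$ an arclength parameter along the gradient trajectories and computing $K=-G_{uu}/G$, after which the hypotheses become ordinary differential relations; your Bochner route reaches the same identity more intrinsically and with less coordinate bookkeeping, which is a modest but genuine advantage.
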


For minimal surfaces  in substantial even codimension, we prove the following result.

\begin{theorem}\label{artiadimension}
(i) Substantial exceptional surfaces in $\mathbb{S}^{6m}$ cannot satisfy the Ricci-like condition $(\ast).$

(ii) Substantial $\left[\frac{n-1}{2}\right]$-exceptional surfaces in an
even dimensional sphere $\mathbb{S}^n$ cannot satisfy the Ricci-like condition
$(\ast).$
\end{theorem}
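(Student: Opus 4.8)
The plan is to force, in each case, a contradiction at the last higher normal bundle: Proposition~\ref{whhat} (through its inductive mechanism) predicts a strictly positive length for a fundamental form that substantiality requires to vanish. I first record that a substantial minimal surface in an even dimensional sphere is automatically nonflat, since flat minimal surfaces in spheres are substantial only in odd dimensional spheres; hence $f$ is nonflat and Proposition~\ref{whhat} is available in both parts. I treat part (ii) first, as it is the clean prototype of the idea.

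\emph{Part (ii).} Write $n=2p$ and let $m=[(n-1)/2]=p-1$ be the index of the last higher normal bundle. By the eccentricity description of exceptionality, being $[(n-1)/2]$-exceptional means that \emph{all} curvature ellipses, including the last one $\mathcal{E}_m^f$, have constant eccentricity; equivalently $\rho_s=2^sK_s^\perp/\|\alpha_{s+1}\|^2$ is constant for every $1\le s\le m$. This is exactly the input that lets the induction of Proposition~\ref{whhat} run up to $r=m$, so that \eqref{aexc} holds for all $1\le s\le m+1$; in particular $\|\alpha_{m+2}\|^2=b_{m+1}(1-K)^{\ell}$ with $\ell\ge1$ and $b_{m+1}=\rho_m^2b_m$. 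Since $f$ is substantial in the even dimensional $\mathbb{S}^{2p}$, the last bundle $N_m^f$ has rank two, so $K_m^\perp>0$, $\rho_m>0$ and $b_{m+1}>0$; hence $\|\alpha_{m+2}\|^2>0$ away from the isolated points where $K=1$. But substantiality forces $N_{m+1}^f=0$, i.e. $\alpha_{m+2}=0$, a contradiction.

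\emph{Part (i).} Let $n=6m$, so the last bundle $N_{3m-1}^f$ has rank two and $\alpha_{3m+1}=0$. Here the hypothesis is only that $f$ is exceptional, which controls the eccentricities merely up to $\mathcal{E}_{3m-2}^f$; thus Proposition~\ref{whhat} applies only with $r=3m-2$, yielding via \eqref{aexc} and \eqref{last} that $\|\alpha_{3m}\|^2=b_{3m-1}(1-K)^m$, $\|\alpha_{3m-1}\|^2=b_{3m-2}(1-K)^m$ and $K_{3m-2}^\perp=c_{3m-2}(1-K)^m$. Substituting $\alpha_{3m+1}=0$ into Proposition~\ref{5} at the last plane bundle gives the key relation $K_{3m-1}^\ast=\lambda\,K_{3m-1}^\perp(1-K)^{-m}$ with $\lambda=b_{3m-2}/\big(2^{3m-3}c_{3m-2}^2\big)>0$. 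Since $f$ is exceptional, $\Phi_{3m-1}$ is holomorphic, and I split into two cases. If $\Phi_{3m-1}=0$, then $\mathcal{E}_{3m-1}^f$ is a circle and Proposition~\ref{3i}(iii) together with $(\ast)$ gives $\Delta\log\|\alpha_{3m}\|^2=6mK=2(3mK-K_{3m-1}^\ast)$, so $K_{3m-1}^\ast=0$; the key relation then forces $K_{3m-1}^\perp=0$, which is impossible for a nondegenerate circle in a rank two bundle.

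The case $\Phi_{3m-1}\neq0$ is the crux, and is where Lemma~\ref{QPgrad} enters; note that here the last ellipse's eccentricity parameter is \emph{not} assumed constant, which is precisely what distinguishes (i) from (ii). Working on the open dense set off $\{K=1\}$ and off the isolated zeros of $\Phi_{3m-1}$, set $S=2^{-(3m-1)}\|\alpha_{3m}\|^2=\beta(1-K)^m$ with $\beta>0$, write $q=K_{3m-1}^\perp$ and $t=q/S\in(0,1)$ (so $t=\rho_{3m-1}$), so that the key relation reads $K_{3m-1}^\ast=\gamma t$ with $\gamma=\lambda\beta>0$. Applying Proposition~\ref{3i}(ii) to $\|\alpha_{3m}\|^2\pm2^{3m-1}K_{3m-1}^\perp=2^{3m-1}(S\pm q)$ and using $\Delta\log S=6mK$, I put $u=\log\frac{1+t}{1-t}$ (so $t=\tanh(u/2)$); the sum of the two identities yields $\Delta\log(1-t^2)=0$ and their difference yields $\Delta u=-4\gamma\tanh(u/2)$. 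The first identity forces $\|\nabla u\|^2=8\gamma\sinh^2(u/2)$, so Lemma~\ref{QPgrad} applies with $P(u)=-4\gamma\tanh(u/2)$ and $Q(u)=8\gamma\sinh^2(u/2)$, and a direct computation gives that $K$ is the negative constant $-8\gamma$ on $\{\nabla u\neq0\}$. A constant $K$ satisfies $(\ast)$ only if $K=0$, contradicting $K=-8\gamma<0$; while on $\{\nabla u=0\}$ the function $t$ is constant and $\Delta u=0$ reduces to $\gamma t=0$, whence $t=0$, contrary to $t>0$. All cases being contradictory, (i) follows. The single delicate step is this reduction to an autonomous system for $u$ and the invocation of Lemma~\ref{QPgrad}; everything else is bookkeeping with Propositions~\ref{whhat}, \ref{5} and \ref{3i}.
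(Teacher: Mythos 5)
Your argument is correct in substance, and for part (i) it is essentially the paper's own proof: the same key relation obtained from Proposition \ref{5} at the last plane bundle with $\alpha_{3m+1}=0$, the same dichotomy on $\Phi_{3m-1}$, and the same reduction of Proposition \ref{3i}(ii) plus $(\ast)$ to Lemma \ref{QPgrad}. Your substitution $u=\log\frac{1+t}{1-t}$ is only cosmetic; the paper works with $\rho=t$ directly, getting the polynomial pair $\Delta\rho=-2\rho(1+\rho^{2})$, $\Vert\nabla\rho\Vert^{2}=2\rho^{2}(1-\rho^{2})$ and the constant $K=-8$ (indeed your $\gamma=\lambda\beta$ equals $1$). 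For part (ii) the underlying computation is again the same one --- the top step of the induction in Proposition \ref{whhat} \emph{is} Proposition \ref{5} at the last plane bundle --- but the packaging differs: the paper evaluates $K^{\ast}$ of the last bundle in two ways ($K$ or $-K$ from \eqref{olalazoun} versus $1$ or $1-K$ from Proposition \ref{5} together with \eqref{aexc} and \eqref{last}), and it disposes of $n\equiv 0\pmod 6$ by citing part (i); you instead read off from \eqref{aexc} at level $s=m+1$ a strictly positive expression for $\Vert\alpha_{m+2}\Vert^{2}$ and contradict $\alpha_{m+2}\equiv 0$, uniformly in all even $n$. Your version is more economical, and you are more careful than the paper on one point: the nonflatness needed to invoke Proposition \ref{whhat} (flat surfaces do satisfy $(\ast)$), which you supply via Kenmotsu's classification.

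There are, however, two pointwise positivity claims that need repair, both fixable by the same fact the paper cites from Otsuki. In part (ii), ``$N_m^f$ has rank two, so $K_m^\perp>0$'' is not a valid inference: rank two of the bundle does not prevent the last curvature ellipse from degenerating to a segment at individual points. What is true, and suffices, is that $\rho_m$ (constant, by exceptionality) cannot vanish \emph{identically}: $K_m^\perp\equiv 0$ would make the last ellipse degenerate everywhere, so $\dim N_m^f\le 1$ and $f$ would lie in a totally geodesic hypersphere (cf. \cite[p.~96]{O}), contradicting substantiality; hence $\rho_m>0$ and $b_{m+1}=\rho_m^2b_m>0$. The same remark is needed in part (i): $t=\rho_{3m-1}$ need not lie in $(0,1)$ pointwise (only $t<1$ off the isolated zeros of $\Phi_{3m-1}$ is guaranteed), and in your degenerate case $\nabla u\equiv 0$ the conclusion $t\equiv 0$ should be contradicted not by the unproved assertion ``$t>0$'' but by substantiality, exactly as above --- which is how the paper closes the corresponding case ``$\rho$ constant $\Rightarrow\rho=0\Rightarrow K_{3m-1}^\perp=0$''. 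Note that your PDE derivation is unaffected, since the system for $u$ is valid wherever $t<1$, including points where $t=0$.
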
 

\begin{proof}
(i) Assume to the contrary that $f\colon M \to \mathbb{S}^{6m}$ is a substantial 
exceptional surface that satisfies the condition $(\ast).$ Since $f$ is $(3m-2)$-exceptional, Proposition \ref{whhat} yields
$\left\Vert\alpha_{3m}\right\Vert ^2=b_{3m-1}(1-K)^m$ and $K_{3m-2}^{\ast}
=-K.$ Moreover, combining Proposition \ref{5} with Proposition \ref{whhat}, we
find that
\begin{equation*}
K_{3m-1}^{\ast}=\frac{2^{3m-1}K_{3m-1}^{\perp}}{b_{3m-1}(1-K)^m}.
\end{equation*}

By Theorem \ref{ena}, $\Phi _{3m-1}$ is holomorphic. Hence either it is identically
zero or its zeros are isolated. If $\Phi _{3m-1}$ is identically zero, then $
f$ is $(3m-1)$-exceptional, and (\ref{olalazoun}) yields $K_{3m-1}^{\ast}=0.$
Then the above equation implies $K_{3m-1}^{\perp}=0.$ This means that  $f$ lies in a totally geodesic $\mathbb{S}^{6m-1}$ of $\mathbb{S}^{6m}$ (cf. 
\cite[p. 96]{O}), which is a contradiction. Suppose now that $\Phi _{3m-1}\neq 0.$ By virtue
of Proposition \ref{3i}(ii), we have
\begin{eqnarray*}
\Delta \log \left(\left\Vert \alpha_{3m}\right\Vert^2
+2^{3m-1}K_{3m-1}^{\perp}\right) &=&2\big(3mK-K_{3m-1}^{\ast}\big), \\
\Delta \log \left(\left\Vert \alpha_{3m}\right\Vert
^2-2^{3m-1}K_{3m-1}^{\perp}\right) &=&2\big(3mK+K_{3m-1}^{\ast}\big).
\end{eqnarray*}

Using the condition $(\ast)$ and setting 
$\rho=2^{3m-1}K_{3m-1}^{\perp}/\left\Vert \alpha_{3m}\right\Vert ^2,$ 
the above equations are equivalent to
\begin{equation}\label{paliapothnarxh}
\Delta \log \left(1+\rho \right) =-2K_{3m-1}^{\ast} \;\; \text{and} \;\; \Delta
\log \left(1-\rho \right) =2K_{3m-1}^{\ast}.
\end{equation}
Since $\rho = 2^{3m-1}K_{3m-1}^{\perp}/\left\Vert \alpha_{3m}\right\Vert^2,$ we
obtain $K_{3m-1}^{\ast}=\rho.$

Then equations (\ref{paliapothnarxh}) are written equivalently
$$
\Delta \rho =-2\rho (1+\rho^2)
\;\; \text{and} \;\; \left\Vert\nabla \rho \right\Vert^2=2\rho^2(1-\rho^2).
$$ 
If the function $\rho$ is constant, then $\rho =0$ and consequently $K_{3m-1}^{\perp}=0,$ which contradicts the fact
that $f$ is substantial. If $\rho$ is not constant, then Lemma \ref{QPgrad}
yields $K=-8,$ which contradicts the Ricci-like condition $(\ast).$

(ii) Assume that $f\colon M\to \mathbb{S}^n$ is a substantial $[(n-1)/2]$-exceptional surface which satisfies the condition $(\ast),$ where $n$
is even. It suffices to consider the case $n=6m+2$ and $n=6m+4,$ since the case
$n=6m$ was settled in (i).

At first let us suppose that $n=6m+2.$ Since $f$ is $3m$-exceptional, (\ref{first}) and 
(\ref{olalazoun}) yield \,$\Phi _{3m}=0$ and\, $K_{3m}^{\ast}=K.$ By virtue of
Proposition \ref{5}, we obtain 
\begin{equation*}
K_{3m}^{\ast}=\frac{K_{3m}^{\perp}\left\Vert \alpha_{3m}\right\Vert^2}{
2^{3m-2}\left(K_{3m-1}^{\perp}\right)^2}.
\end{equation*}
Then, using (\ref{aexc}) and (\ref{last}), we have that $K_{3m}^{\ast}=1$, which
is a contradiction.

We suppose now that $n=6m+4.$ Since $f$ is $(3m+1)$-exceptional, (\ref{first}) and 
(\ref{olalazoun}) yield \,$\Phi _{3m+1}=0$ and\, $K_{3m+1}^{\ast}=-K.$ From
Proposition \ref{5} it follows that
\begin{equation*}
K_{3m+1}^{\ast}=\frac{K_{3m+1}^{\perp}\left\Vert \alpha_{3m+1}\right\Vert^2}
{2^{3m-1}\left(K_{3m}^{\perp}\right)^2}.
\end{equation*}
Using (\ref{aexc}) and (\ref{last}), we find that $K_{3m+1}^{\ast}=1-K$, which is a
contradiction, and this completes the proof.
\end{proof}

\section{Global results}

In this section, we prove results for compact minimal surfaces that satisfy the  condition $(\ast)$ and are not homeomorphic to the torus. We recall from Lemma \ref{avtf}, that such surfaces cannot be homeomorphic to the sphere $\mathbb S^2.$

\begin{theorem}\label{pamegiallaksana}
Let $f\colon M\to \mathbb{S}^n$ be a compact substantial minimal surface with  
genus $g\geq 2$ which satisfies the Ricci-like condition $(\ast)$ away
from isolated points where the Gaussian curvature satisfies $K=1.$ If the eccentricity
$\varepsilon _r$ of the higher curvature ellipses of order 
$r\equiv 0\;\mathrm{mod}\; 3$ for any $1\leq r\leq s$ satisfies the condition
\begin{equation*}
\int_{M}\frac{\varepsilon_r}{\left(1-K\right)^{\gamma}}dA<\infty 
\end{equation*}
for some constant $\gamma \geq 4/3,$ then $f$ is $s$-exceptional.
\end{theorem}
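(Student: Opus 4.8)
The plan is to prove by induction on $r$ that $f$ is $r$-exceptional for every $1\le r\le s$. Since $M$ is compact of genus $g\ge2$ it is neither a torus nor, by Lemma~\ref{avtf}, a sphere; in particular $f$ is nonflat and $1-K$ is of absolute value type with isolated zeros $p_1,\dots,p_m$ of even orders $2k_j$ satisfying $\sum_j k_j=-3\chi(M)=6(g-1)$. At the inductive step I assume $f$ to be $(r-1)$-exceptional, so that by Theorem~\ref{ena} it suffices to show that the eccentricity $\varepsilon_r$ is constant. Then $\Phi_r$ is holomorphic and Proposition~\ref{whhat} (the Gauss equation $\Vert\alpha_2\Vert^2=2(1-K)$ when $r=1$) gives $\Vert\alpha_{r+1}\Vert^2=b_r(1-K)^{p_r}$, where $p_r$ is the exponent appearing in \eqref{aexc}. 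Combining \eqref{what} and \eqref{si} I would record the pointwise identities $\kappa_r^2-\mu_r^2=4\Vert\Phi_r\Vert$ and $\kappa_r^2+\mu_r^2=2^{-r}b_r(1-K)^{p_r}$, so that, writing $u=(\kappa_r^2-\mu_r^2)/(\kappa_r^2+\mu_r^2)$, one has $0\le u\le1$, $\varepsilon_r^2=2u/(1+u)$, and $u=0$ precisely where $\Phi_r=0$. Thus showing $\varepsilon_r$ constant amounts to controlling $u$, and the point is that $u$ is expressed entirely through the \emph{already holomorphic} $\Phi_r$ and through $1-K$.

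The engine of the proof is the Laplacian of $\log u$. Using $\Delta\log F=-2K$, the holomorphicity of $\Phi_r$ (whence $\Delta\log\Vert\Phi_r\Vert=2(r+1)K$ off the zeros of $\Phi_r$), and the Ricci-like condition $\Delta\log(1-K)=6K$, a direct computation gives
\[
\Delta\log u=2(r+1)K-6p_rK=
\begin{cases}
\,2K,&r\equiv0\;\mathrm{mod}\;3,\\
\,-2K,&r\equiv1\;\mathrm{mod}\;3,\\
\,0,&r\equiv2\;\mathrm{mod}\;3,
\end{cases}
\]
away from the zeros of $\Phi_r$ and the points $p_j$.

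For $r\equiv2\;\mathrm{mod}\;3$ the function $u$ is bounded, nonnegative and real-analytic with $\log u$ harmonic off its zeros, hence of absolute value type by Lemma~\ref{dena}; since $\Delta\log u=0$, Lemma~\ref{forglobal}(i) forces $N(u)=0$, so either $u\equiv0$ or $u>0$ is harmonic on the closed surface and therefore constant. For $r\equiv1\;\mathrm{mod}\;3$ I would instead consider $v=u^3(1-K)$, which is bounded with $\Delta\log v=3(-2K)+6K=0$ and hence of absolute value type; Lemma~\ref{forglobal}(i) again gives $N(v)=0$, but $v$ vanishes at each $p_j$ (as $u$ is bounded and $1-K$ vanishes there), and such $p_j$ exist because $g\ge2$. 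This forces $v\equiv0$, whence $u\equiv0$. In either residue class $\varepsilon_r$ is constant, with no use of the integral hypothesis.

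The case $r\equiv0\;\mathrm{mod}\;3$ is the main obstacle, and it is exactly here that the integral hypothesis is needed: the natural candidate $u(1-K)^{-1/3}$ has vanishing $\Delta\log$ but acquires poles at the zeros of $1-K$, and a Gauss--Bonnet count shows that topology alone is consistent with $\Phi_r\not\equiv0$. Assume $\Phi_r\not\equiv0$ and set $n_j=\operatorname{ord}_{p_j}\Phi_r$ and $\nu_j=\operatorname{ord}_{p_j}u=n_j-\tfrac{2r}{3}k_j$. Boundedness of $u$ forces $\nu_j\ge0$, and near $p_j$ one has $\varepsilon_r\sim|z|^{\nu_j/2}$ while $(1-K)^\gamma\sim|z|^{2k_j\gamma}$; hence finiteness of $\int_M\varepsilon_r(1-K)^{-\gamma}dA$ yields the strict lower bound $\nu_j>4k_j\gamma-4\ge\tfrac{16k_j-12}{3}$ for the given $\gamma\ge4/3$. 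Summing over $j$ and using $\sum_j k_j=6(g-1)$ gives $\sum_j n_j>4(r+8)(g-1)-4m$, while Lemma~\ref{forglobal}(ii) bounds $\sum_j n_j\le N(\Phi_r)=-(2r+2)\chi(M)=4(r+1)(g-1)$. These two estimates force $m>7(g-1)$, contradicting $m\le\sum_j k_j=6(g-1)<7(g-1)$. Therefore $\Phi_r\equiv0$ and $\varepsilon_r\equiv0$, which completes the inductive step and shows that $f$ is $s$-exceptional. The delicate point throughout is that, unlike the other two residue classes, the $r\equiv0$ case cannot be settled by absolute-value-type and Gauss--Bonnet reasoning alone and genuinely requires the quantitative control of $\varepsilon_r$ near the zeros of $1-K$ provided by the integrability assumption.
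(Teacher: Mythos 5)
Your proposal is correct, and its skeleton is the paper's: induction on $r$, the trichotomy on $r\;\mathrm{mod}\;3$, Proposition \ref{whhat} (Gauss equation for $r=1$) to get $\Vert\alpha_{r+1}\Vert^2=b_r(1-K)^{p_r}$, and zero-counting at the zeros $p_j$ of $1-K$ via Lemmas \ref{avtf} and \ref{forglobal}, with the integral hypothesis entering only when $r\equiv 0\;\mathrm{mod}\;3$. The differences are in how each case is closed, and they are worth recording. For $r\equiv 2$ the paper extends its function $u_r$ (a constant multiple of your $u^6$) across the $p_j$, observes it is subharmonic and applies the maximum principle, while you use the absolute-value-type dichotomy and Lemma \ref{forglobal}(i) to force $N(u)=0$; for $r\equiv 1$ the paper derives the order bound $3l_r(p_j)\ge 2k_j(r+2)$ and contradicts Lemma \ref{forglobal}(ii), while you package the same information into $v=u^3(1-K)$ with $\Delta\log v=0$, $N(v)=0$ and $v(p_j)=0$ — equivalent mechanisms. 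For $r\equiv 0$ the paper first passes from $\varepsilon_r$ to $\varepsilon_r^2/(2-\varepsilon_r^2)$, which is exactly your $u$, and integrates that, obtaining $\nu_j>2k_j\gamma-2$; you integrate $\varepsilon_r\asymp|z|^{\nu_j/2}$ directly and get the stronger bound $\nu_j>4k_j\gamma-4$, so your counting contradiction ($m>7(g-1)$ against $m\le\sum_jk_j=6(g-1)$) actually goes through for any $\gamma\ge 7/6$, slightly sharper than the paper's threshold $\gamma\ge 4/3$. One citation needs repair: Lemma \ref{dena} requires the function to be real analytic, and your $u$ — a positive smooth multiple of $|f_r|$ times a negative power of $1-K$ — is not real analytic at the zeros of $f_r$. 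But you do not need that lemma: boundedness of $u$ (resp.\ $v$) forces the exponent of $|z|$ at each $p_j$ to be a nonnegative integer (the relevant power of $(1-K)^{1/3}$ is an integer power in each residue class where you use it), so $u=|z^{\nu_j}\psi_r|\cdot(\text{smooth positive})$ is of absolute value type by inspection, exactly as in the paper's handling of $u_r$; likewise the global holomorphicity of $\Phi_r$ on all of $M$, which your use of Lemma \ref{forglobal}(ii) requires, is supplied by Proposition \ref{neoksanaafththfora} rather than by Theorem \ref{ena} alone.
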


\begin{proof}
According to Lemma \ref{avtf}, the function $1-K$ is of absolute value type with
nonempty zero set $M_0=\left\{ p_1,\dots,p_m\right\} $ and corresponding order $\mathrm{ord}_{p_j}(1-K)=2k_j.$ For each point $p_j, j=1,\dots,m,$ we choose a
local complex coordinate $z$ such that $p_j$ corresponds to $z=0$\ and the induced
metric is written as $ds^2=F|dz|^2.$ Around\ $p_j,$ we have that
\begin{equation}\label{den}
1-K=|z|^{2k_j}u_0,
\end{equation}
where $u_0$ is a smooth positive function.

We shall prove that $f$ is $s$-exceptional by induction. At first we show that $f$ is
$1$-exceptional. In fact, we can prove that $f$ is $1$-isotropic. We know that the first
Hopf differential $\Phi _1=f_1dz^4$ is holomorphic. Hence either $\Phi _1$ is identically zero, or its zeros are isolated.  Assume now
that $\Phi _1$ is not identically zero. Obviously, $\Phi _1$ vanishes at each $p_j.$
Thus we may write $f_1=z^{l_1(p_j)}\psi _1$\ around\ $p_j,$ where
$l_1(p_j)$\ is the order of $\Phi_1$\ at $p_j,$ and $\psi _1$ is a nonzero
holomorphic function. Bearing in mind (\ref{what}),\ we obtain 
\begin{equation*}
\frac{1}{4}\left\Vert \alpha_2\right\Vert^4-(K_1^{\perp})^2=2^4F^{-4}|\psi _1|^2|z|^{2l_{1}(p_j)}
\end{equation*}
around $p_j.$ In view of (\ref{den}) and the fact that $\left\Vert
 \alpha_2\right\Vert^2=2(1-K),$ we find that the function $u_1\colon M
\smallsetminus M_0\to \mathbb{R}$ defined by
$$
u_1=\frac{\left((1-K)^2-(K_1^{\perp})^2\right)^3}{(1-K)^4},
$$
around $p_j,$ is written as 
\begin{equation}\label{tora}
u_1=2^{12}F^{-12}u_0^{-4}|\psi _1|^6|z|^{6l_{1}(p_j)-8k_j}.
\end{equation}

Since $
u_1\leq (1-K)^2,$ from (\ref{tora}) we deduce that $l_1(p_j)\geq 2k_j$ and we
can extend $u_1$ to a smooth function on $M.$ It follows from Proposition \ref{3i}(ii)
and the condition $(\ast)$ that $\log u_1$ is harmonic away from the isolated zeros of $u_1$. By continuity, the function  $u_1$ is subharmonic everywhere on
$M.$ Using the maximum principle, we deduce that $u_1$ is a positive constant. This
contradicts the fact that $K=1$ on $M_0.$

Suppose now that $f$ is $(r-1)$-exceptional for $r\ge2.$ We note that $M$ cannot be flat
due to our assumption on the genus. We shall prove that $f$ is also $r$
-exceptional. From \cite[Proposition 4]{V08}, we know that $\Phi _r=f_rdz^{2r+2}$
is globally defined and holomorphic. Hence either $\Phi _r=0$ or its zeros are isolated.
In the former case, $f$ is $r$-exceptional.

Assume now that $\Phi _r$ is not identically zero. Obviously, $\Phi _r$ vanishes at
$p_j.$ Hence we may write $f_r=z^{l_r(p_j)}\psi _r$\ around\ $p_j,$
where $l_r(p_j)$\ is the order of $\Phi _r$\ at $p_j,$ and $\psi _r$ is a
nonzero holomorphic function. Bearing in mind (\ref{what}),\ we obtain 
\begin{equation}\label{aque}
\left\Vert \alpha_{r+1}\right\Vert^4-4^r(K_r^{\perp})^2=4^{r+2}F^{-2(r+1)}|\psi _r|^2|z|^{2l_r(p_j)}
\end{equation}
around $p_j.$ In view of (\ref{den}), we find that 
\begin{equation}\label{utora}
u_r=4^{3(r+2)}F^{-6(r+1)}u_0^{-2(r+1)}|\psi
_r|^6|z|^{6l_r(p_j)-4k_j(r+1)},
\end{equation}
where $u_{r}\colon M\smallsetminus M_0\to \mathbb{R}$ is the smooth function
(see Proposition \ref{neoksanaafththfora})
given by 
$$
u_r=\frac{\left(\left\Vert \alpha_{r+1}\right\Vert^4-4^r(K_r^{\perp})^2\right)^3}{(1-K)^{2(r+1)}}.
$$

We claim that $r\equiv 2\;\mathrm{mod}\; 3.$ Arguing indirectly, we at first assume that
$r\equiv 0\;\mathrm{mod}\; 3.$ Since $\varepsilon_r^2/(2-
\varepsilon_r^2)\leq \varepsilon_r,$ our assumption implies 
\begin{equation*}
\int_{M}\frac{\varepsilon_r^2}{(2-\varepsilon_r^2)\left(1-K\right)^{\gamma}}
dA<\infty,
\end{equation*}
or equivalently, bearing in mind (\ref{elipsi}) and (\ref{si}),
\begin{equation*}
\int_{M}\frac{\left(\left\Vert \alpha_{r+1}\right\Vert^4-4^r(K_r^{\perp})^2\right)^{1/2}}{\left(1-K\right)^{\gamma}\left\Vert \alpha_{r+1}\right\Vert^2}dA<\infty.
\end{equation*}
Taking into account (\ref{aexc}), the above becomes 
\begin{equation*}
\int_{M}\frac{\left(\left\Vert \alpha_{r+1}\right\Vert^4-4^r(K_r^{\perp})^2\right)^{1/2}}{\left(1-K\right)^{\gamma +\frac{r}{3}}}dA<\infty.
\end{equation*}
We consider the subset
$$
U_{\delta}(p_j)=\left\{p\in M: |z(p)|<\delta \right\}, j=1,\dots,m.
$$
Using (\ref{den}) and (\ref{aque}), the above inequality implies that
\begin{equation*}
\int_{U_{\delta _0}(p_j)\smallsetminus U_{\delta}(p_j)}|z|^{l_r(p_j)-2k_j(\gamma+\frac{r}{3})}dA<c
\end{equation*}
for any $\delta <\delta _0,$ where $c$ is a positive constant and\ $\delta_0$ is small
enough. We set $z=\rho e^{i\theta}.$ Since $dA=F\rho d\rho\wedge d\theta,$ we
deduce that
$$
\int_0^{\delta _0}\rho^{l_r(p_j)-2k_j(\gamma+\frac{r}{3})+1}d\rho <\infty.
$$
This implies that
$$
l_r(p_j)>2k_j(\gamma+\frac{r}{3})-2.
$$
Summing up, we obtain
$$
N(\Phi_r)+2m>2(\gamma+\frac{r}{3})\sum_{j=1}^{m}k_j.
$$
Using Lemma \ref{forglobal}(ii) and \eqref{denginetai} in Lemma \ref{avtf}, it follows that
\begin{equation*}
\chi (M)(3\gamma-1)+m>0.
\end{equation*}
On the other hand, \eqref{denginetai} implies that $m\leq -3\chi (M)$, which contradicts the above and the hypothesis that $\chi (M)<0.$

Now assume that $r\equiv 1\;\mathrm{mod}\;3.$ Bearing in mind (\ref{aexc}), we deduce that $u_r\leq
b_r^6(1-K)^2.$ Using (\ref{den}) and (\ref{utora}), we obtain $3l_r(p_j)
\ge2k_j(r+2).$ Then  from Lemma \ref{avtf}, we conclude that
$$
N(\Phi_r)\ge-2(r+2)\chi (M).
$$
Due to  Lemma \ref{forglobal}(ii), the above contradicts our hypothesis on the genus.

Therefore, we conclude that $r\equiv2\;\mathrm{mod}\; 3.$ By virtue of (\ref{aexc}), we obtain
$u_r\leq b_r^6.$ Then (\ref{utora}) implies $3l_r(p_j)\geq 2k_j(r+1),$ and
we can extend $u_r$ to a smooth function on $M.$ It follows from Proposition
\ref{3i}(ii) and the Ricci-like condition $(\ast)$ that $\log u_r$ is harmonic away from the
zeros which are isolated, and consequently by continuity $u_r$ is subharmonic
everywhere on $M.$ By the maximum principle, we deduce that the function $u_r$ is a positive
constant. This shows that the $r$-th curvature ellipse has constant eccentricity, i.e., the surface $f$ is
$r$-exceptional. This completes the proof.
\end{proof}

For compact minimal submanifolds in spheres with low codimension, we prove the following result.

\begin{corollary}
Let $f\colon M\to\mathbb{S}^n$ be a substantial minimal surface with $4\le n\le7.$
If $M$ is compact and not homeomorphic to the torus, then it cannot be locally isometric
to a pseudoholomorphic curve in $\mathbb{S}^5$, unless $n=5.$
\end{corollary}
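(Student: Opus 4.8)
The plan is to use the equivalence, established earlier, between being locally isometric to a pseudoholomorphic curve in $\mathbb{S}^5$ and satisfying the Ricci-like condition $(\ast)$, and then to eliminate each of the dimensions $n=4,6,7$ in turn, leaving $n=5$. First I would record the topological input. Since $f$ is locally isometric to a pseudoholomorphic curve, $M$ carries the complex structure $J$ coming from its metric and orientation, so it is an oriented surface; being compact, not homeomorphic to $\mathbb{S}^2$ (Lemma \ref{avtf}) and not to the torus, it has genus $g\geq2$, whence $\chi(M)<0$ and, by Gauss--Bonnet, $f$ is nonflat. Moreover its induced metric satisfies $(\ast)$ away from the isolated points where $K=1$. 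It then suffices to reach a contradiction when $n\in\{4,6,7\}$.

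For the even dimensions I would invoke Theorem \ref{artiadimension}. When $n=6$ I would first apply Theorem \ref{pamegiallaksana} with $s=1$: there is no curvature ellipse of order $r\equiv0\;\mathrm{mod}\;3$ below the top one, so the eccentricity hypothesis is vacuous and $f$ is $1$-exceptional, that is, exceptional in $\mathbb{S}^{6}=\mathbb{S}^{6\cdot1}$; then Theorem \ref{artiadimension}(i) yields a contradiction. When $n=4$, substantiality forces $N_2^f=0$ and hence $\alpha_3^f=0$, so $\Phi_2=0$ and $f$ is trivially $1$-exceptional; this is the degenerate case $m=0$ of Theorem \ref{artiadimension}(ii), where one must use the $r=1$ branch of Proposition \ref{5}, which together with $\alpha_3^f=0$ gives $K_1^{\ast}=K_1^{\perp}$, while Proposition \ref{whhat} (equivalently (\ref{olalazoun}) and (\ref{last})) gives $K_1^{\ast}=-K$ and $K_1^{\perp}=1-K$; comparing these forces $1=0$.

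For the remaining odd dimension $n=7\equiv1\;\mathrm{mod}\;6$ I would use Theorem \ref{pamegiallaksana} again, now with $s=2$: the orders $r\in\{1,2\}$ contain none with $r\equiv0\;\mathrm{mod}\;3$, so the hypothesis is once more vacuous and $f$ is $2$-exceptional, i.e. exceptional in $\mathbb{S}^7$. I would then run the local part of the proof of Theorem \ref{kanenadenexwlabel} in its $n=6l+1$ case with $l=1$, which needs no simple connectedness: since $N_3^f$ is the last, one-dimensional, normal bundle, $K_3^{\perp}=K_3^{\ast}=0$, so Proposition \ref{3i} gives $\Delta\log\|\alpha_4^f\|^2=8K$, whereas (\ref{aexc}) gives $\|\alpha_4^f\|^2=b_3(1-K)$ and hence, by $(\ast)$, $\Delta\log\|\alpha_4^f\|^2=6K$; thus $K\equiv0$, contradicting $g\geq2$. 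This exhausts $n\in\{4,6,7\}$ and leaves $n=5$.

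The main obstacle, and the point to get exactly right, is the index bookkeeping that makes the global eccentricity hypothesis of Theorem \ref{pamegiallaksana} vacuous in this low range: one must verify that beneath the top normal bundle no curvature ellipse of order divisible by $3$ appears, so that exceptionality is automatic from compactness and $g\geq2$, and one must separately notice that $n=4$ lies outside the stated reach of Theorem \ref{artiadimension}(ii) and has to be handled through the $r=1$ form of Proposition \ref{5}. Everything else reduces to matching the residues $n\equiv 6m-1,\,6m,\,6m\pm2,\,6l+1$ to the appropriate previously established theorem.
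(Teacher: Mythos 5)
Your proposal is correct and follows essentially the same route as the paper's proof: genus $g\geq2$ via Lemma \ref{avtf}, exceptionality via Theorem \ref{pamegiallaksana} with vacuous eccentricity hypothesis (no order $r\equiv0\;\mathrm{mod}\;3$ occurs below the top in this codimension range), Theorem \ref{artiadimension} for the even dimensions, and the $n\equiv5\;\mathrm{mod}\;6$ obstruction underlying Theorem \ref{kanenadenexwlabel} for $n=7$. Where the paper simply cites Theorem \ref{artiadimension}(ii) for $n=4$ and Theorem \ref{kanenadenexwlabel} for $n=7$, you instead unwind their proofs---using the $r=1$ branch of Proposition \ref{5} with $\alpha_3^f=0$ to cover the degenerate case $m=0$, and running the local $n=6l+1$ computation to dispense with the simple-connectedness hypothesis---and both refinements are sound and, if anything, more careful than the paper's direct citations.
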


\begin{proof}
From Lemma \ref{avtf}, we have that the genus of $M$ satisfies $g\geq2.$ We assume that $n\neq5.$ For $n=4$ and $n=6,$ the result follows immediately from
Theorem \ref{pamegiallaksana} and Theorem \ref{artiadimension}(ii). In the case where
$n=7$, Theorem \ref{pamegiallaksana} implies that the surface is exceptional and the
result follows from Theorem \ref{kanenadenexwlabel}.
\end{proof}

\begin{remark}
The assumption in Theorem \ref{pamegiallaksana} on the eccentricity of curvature ellipses of order 
$r\equiv 0\;\mathrm{mod}\; 3$ could be replaced by the condition
$$
\varepsilon_r\leq (1-K)^\beta
$$
for positive constants $c$ and $\beta>1/3.$ Both conditions claim that the curvature ellipses of order 
$r\equiv 0\;\mathrm{mod}\; 3$ tend to be circles  close to totally geodesic points. We don't know whether Theorem \ref{pamegiallaksana} holds without this assumption in any codimension.

\end{remark}

The following global result is complementary to Theorem \ref{artiadimension}.

\begin{theorem}
Let $f\colon M\to\mathbb{S}^{6m+4}, m\geq1,$ be a substantial exceptional surface. If $M$ is compact with genus 
$g\geq 2,$ then it cannot be locally isometric to a pseudoholomorphic curve in $\mathbb{S}^5.$
\end{theorem}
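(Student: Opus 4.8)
The strategy is to argue by contradiction and to upgrade the exceptionality hypothesis to the maximal possible order, at which point Theorem~\ref{artiadimension}(ii) applies directly. Set $n=6m+4$, so that $[(n-1)/2]=3m+1$ and ``exceptional'' means ``$3m$-exceptional''. Suppose, contrary to the assertion, that $f$ were locally isometric to a pseudoholomorphic curve in $\mathbb{S}^5$. By the equivalence recalled in Section~5, this means that the induced metric satisfies the Ricci-like condition $(\ast)$ at points where $K<1$. Because $M$ is compact of genus $g\ge2$, it is not flat (a compact flat surface is a torus), so $f$ is a nonflat, substantial, $3m$-exceptional surface satisfying $(\ast)$; moreover, Lemma~\ref{avtf} ensures that $1-K$ is of absolute value type with isolated zeros, so the compact-surface machinery of Section~8 is available.

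I would then observe that the integrability hypothesis of Theorem~\ref{pamegiallaksana} is automatically satisfied in this situation. Applying Proposition~\ref{whhat} with $r=3m$, relation~(\ref{first}) gives $\Phi_r=0$ for every $r\equiv0\;\mathrm{mod}\;3$ with $1\le r\le 3m$. By the discussion following~(\ref{what}), the vanishing of $\Phi_r$ means precisely that the $r$-th curvature ellipse is a circle, so $\varepsilon_r\equiv0$ for each such $r$. The only orders $r\equiv0\;\mathrm{mod}\;3$ with $1\le r\le 3m+1$ are $3,6,\dots,3m$, all of which are $\le 3m$; hence $\int_M \varepsilon_r(1-K)^{-\gamma}\,dA=0<\infty$ for any $\gamma\ge4/3$. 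Thus Theorem~\ref{pamegiallaksana} applies with $s=3m+1$ and yields that $f$ is $(3m+1)$-exceptional. Note that the decisive new induction step occurs at $r=3m+1\equiv1\;\mathrm{mod}\;3$, which in the proof of Theorem~\ref{pamegiallaksana} is settled by the genus assumption rather than by the eccentricity condition.

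To close the argument I would invoke Theorem~\ref{artiadimension}(ii): since $f$ is a substantial $[(n-1)/2]$-exceptional surface in the even-dimensional sphere $\mathbb{S}^{6m+4}$, it cannot satisfy the Ricci-like condition $(\ast)$. This contradicts the first paragraph and completes the proof. The one step deserving care is the verification that Theorem~\ref{pamegiallaksana}'s eccentricity condition is vacuous here; the point is that $3m$-exceptionality together with $(\ast)$ forces, via Proposition~\ref{whhat}, every curvature ellipse of order $r\equiv0\;\mathrm{mod}\;3$ below the top to be an exact circle, so no quantitative decay of $\varepsilon_r$ near the zeros of $1-K$ has to be established. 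With that in hand, the proof is a direct concatenation of Proposition~\ref{whhat}, Theorem~\ref{pamegiallaksana} and Theorem~\ref{artiadimension}(ii).
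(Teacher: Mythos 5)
Your proof is correct, but it reaches the conclusion by a more modular route than the paper's. The paper never invokes Theorem \ref{pamegiallaksana} in this proof: it argues directly on the last Hopf differential. Since $f$ is $3m$-exceptional, $\Phi_{3m+1}$ is globally defined and holomorphic; Theorem \ref{artiadimension}(ii) rules out $\Phi_{3m+1}\equiv0$ (the same final step as yours); and for $\Phi_{3m+1}\not\equiv0$ the paper performs an explicit zero-counting estimate, namely the bound $u\le b_{3m+1}^{6}(1-K)^{2}$ coming from \eqref{aexc} forces the order of $\Phi_{3m+1}$ at each zero $p_j$ of $1-K$ to be at least $2k_j(m+1)$, so that $N(\Phi_{3m+1})\ge-6(m+1)\chi(M)$ by \eqref{denginetai}, contradicting $N(\Phi_{3m+1})=-(6m+4)\chi(M)$ from Lemma \ref{forglobal}(ii) because $\chi(M)<0$. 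That estimate is exactly the $r\equiv1\;\mathrm{mod}\;3$ step inside the proof of Theorem \ref{pamegiallaksana}, specialized to $r=3m+1$, which is precisely the step you identified as decisive; so your black-box appeal to that theorem runs the same engine in packaged form. The genuinely new ingredient in your write-up is the observation that the eccentricity hypothesis of Theorem \ref{pamegiallaksana} is vacuous here, because Proposition \ref{whhat} gives $\Phi_r=0$, hence $\varepsilon_r\equiv0$, for all $r\equiv0\;\mathrm{mod}\;3$ with $r\le3m$; the paper never needs this observation, since in its inline argument only the order $3m+1\equiv1\;\mathrm{mod}\;3$ ever appears. Your route buys brevity and exhibits the theorem as a formal corollary of Proposition \ref{whhat}, Theorem \ref{pamegiallaksana} and Theorem \ref{artiadimension}(ii); the paper's route is self-contained and avoids applying Theorem \ref{pamegiallaksana} at the top index $s=[(n-1)/2]=3m+1$, where $s$-exceptionality must be read in the constant-eccentricity sense (as the paper itself does in Theorem \ref{artiadimension}(ii)) --- a mild point of care that your proof handles correctly by noting that this last step is settled by the genus assumption alone.
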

\begin{proof}
We assume to the contrary that the surface satisfies the Ricci-like condition ($\ast$). Since
$f$ is $3m$-exceptional, from Proposition \ref{neoksanaafththfora} we know that the
Hopf differential $\Phi _{3m+1}=f_{3m+1}dz^{6m+4}$ is globally defined and
holomorphic. Hence either $\Phi _{3m+1}=0$ or its zeros are isolated.

Theorem \ref{artiadimension}(ii) implies that the Hopf differential $\Phi _{3m+1}$
cannot vanish identically.

According to Lemma \ref{avtf}, the function $1-K$ is of absolute value type with
nonempty zero set $M_0=\left\{ p_1,\dots,p_m\right\} $ and corresponding order
$\mathrm{ord}_{p_j}(1-K)=2k_j.$ For each point $p_j, j=1,\dots, m,$ we choose a local complex
coordinate $z$ such that $p_j$ corresponds to $z=0$\ and the induced metric is written
as $ds^2=F|dz|^2.$ Around\ $p_j,$ we have that
\begin{equation}\label{ksanaidio}
1-K=|z|^{2k_j}u_0,
\end{equation}
where $u_0$ is a smooth positive function.

Obviously, $\Phi _{3m+1}$ vanishes at $p_{j}.$ Hence we may write $f_{3m+1}=
z^{l(p_{j})}\psi$ around\ $p_{j},$ where $l(p_{j})$\ is the order of $\Phi _{3m+1}$\
at $p_{j},$ and $\psi$ is a nonzero holomorphic function. Bearing in mind (\ref{what}),\
we obtain 
\begin{equation*}\label{aque2}
\left\Vert \alpha_{3m+2}\right\Vert^4-4^{3m+1}(K_{3m+1}^{\perp})^2=2^{6(m+1)}F^{-2(3m+2)}|\psi|^2|z|^{2l(p_j)}
\end{equation*}
around $p_j.$ In view of (\ref{ksanaidio}), we find that 
\begin{equation}\label{utoratora}
u=2^{18(m+1)}F^{-6(3m+2)}u_0^{-2(3m+2)}|\psi
|^6|z|^{6l(p_j)-4k_j(3m+2)},
\end{equation}
where $u\colon M\smallsetminus M_0\to \mathbb{R}$ is the smooth function
(see Proposition \ref{neoksanaafththfora}) given by 
$$
u=\frac{\left(\left\Vert \alpha_{3m+2}\right\Vert^4-4^{3m+1}(K_{3m+1}^{\perp})^2\right)^3}{(1-K)^{2(3m+2)}}.
$$
Using (\ref{aexc}), it follows that $u\leq
b_{3m+1}^6(1-K)^2.$ Then (\ref{ksanaidio}) and (\ref{utoratora}) imply that
$l(p_j)\ge2k_j(m+1).$ By Lemma \ref{avtf}, we deduce that
$$
N(\Phi_{3m+1})\ge-6(m+1)\chi (M).
$$
It follows from Lemma \ref{forglobal}(ii) that the above contradicts our hypothesis on the genus and the theorem is proved.
\end{proof}

\end{document}